\newtheorem{thm}{Theorem}[section]
\newtheorem{lem}[thm]{Lemma}
\newtheorem{cor}[thm]{Corollary}
\newtheorem{prop}[thm]{Proposition}
\newtheorem{rmk}{Remark}
\numberwithin{equation}{section}
\newcommand{\bel}{\begin{equation} \label}
\newcommand{\ee}{\end{equation}}
\def\beq{\begin{equation}}
\def\eeq{\end{equation}}
\newcommand{\bea}{\begin{eqnarray}}
\newcommand{\eea}{\end{eqnarray}}
\newcommand{\beas}{\begin{eqnarray*}}
\newcommand{\eeas}{\end{eqnarray*}}
\newcommand{\pd}{\partial}
\newcommand{\dd}{\mbox{d}}
\newcommand{\re}{\mathfrak R}
\newcommand{\R}{\mathbb{R}}
\newcommand{\C}{\mathbb{C}} 
\newcommand{\N}{\mathbb{N}}
\newcommand{\cA}{\mathcal{A}}
\newcommand{\cB}{\mathcal{B}}
\newcommand{\cC}{\mathcal{C}}
\newcommand{\cS}{\mathcal{S}}
\newcommand{\supp}{\mathrm{supp}\,}  
\def\epsilon{\varepsilon}
\def\phi {\varphi}
\providecommand{\abs}[1]{\left\lvert#1\right\rvert}
\providecommand{\norm}[1]{\left\lVert#1\right\rVert}
\renewcommand{\leq}{\leqslant}
\renewcommand{\geq}{\geqslant}
\providecommand{\abs}[1]{\left\lvert#1\right\rvert}
\providecommand{\norm}[1]{\left\lVert#1\right\rVert}
\def\thefootnote{{}}
\newcommand{\overbar}[1]{\mkern 1.5mu\overline{\mkern-1.5mu#1\mkern-1.5mu}\mkern 1.5mu}
\newtheorem{example}{Example}[section]
\title{\bf Identification of time-varying source term in time-fractional evolution equations}
\author{
Yavar Kian$^1$,
\'Eric Soccorsi$^1$,
Qi Xue$^2$
and Masahiro Yamamoto$^{3,4,5}$
}
\begin{document}

\begin{abstract} This paper is concerned with the inverse problem of determining the time and space dependent source term of diffusion equations with constant-order time-fractional derivative in $(0,2)$. We examine two different cases. In the first one, the source is the product of 
a spatial term and a temporal term, and we prove that the term depending on the space variable  can be retrieved  by  observation over the time interval of the solution on an arbitrary sub-boundary. Under some suitable assumptions we can also show the simultaneous recovery of the spatial term and the temporal term. In the second case, we assume that the first term of the product varies with one fixed space variable, while the second one is a function of all the remaining space and time variables,  and we show that they are uniquely determined by one arbitrary lateral measurement of the solution. These source identification results boil down to a weak unique continuation principle in the first case and a unique continuation principle for Cauchy data in the second one, that are preliminarily established. Finally, numerical reconstruction of the spatial term in the first case is carried out through an iterative algorithm based on the Tikhonov regularization method.\\

\medskip
\noindent
{\bf  Keywords:} Inverse source problems, diffusion equation, fractional diffusion equation, uniqueness result, numerical reconstruction, Tikhonov regularization method. \\

\medskip
\noindent
{\bf Mathematics subject classification 2010 :} 35R30, 	35R11.

\end{abstract}

\maketitle

\renewcommand{\thefootnote}{\fnsymbol{footnote}}
\footnotetext{\hspace*{-5mm} 
\begin{tabular}{@{}r@{}p{13cm}@{}} 
\\
$^1$& Aix Marseille Universit\'e, Universit\'e de Toulon, CNRS, CPT, Marseille, France;
Email: yavar.kian@univ-amu.fr; Email: eric.soccorsi@univ-amu.fr\\
$^2$& ISTerre, CNRS \& Univ. Grenoble Alpes, CS 40700 38058 Grenoble cedex 9 France; Email: qi.xue@univ-grenoble-alpes.fr\\
$^3$& Graduate School of Mathematical Sciences, 
the University of Tokyo, 
3-8-1 Komaba, Meguro-ku, Tokyo 153-8914, Japan; \\
$^4$&Honorary Member of Academy of Romanian Scientists, Splaiul Independentei
Street, no 54, 050094 Bucharest Romania;\\
$^5$&Peoples' Friendship University of Russia (RUDN University) 6 Miklukho-Maklaya
St, Moscow, 117198, Russia;
E-mail:
myama@next.odn.ne.jp
\\

 \end{tabular}}
 
 \tableofcontents


\section{Introduction}
\label{sec-intro}

\subsection{Settings}
\label{sec-settings}

Let $\Omega$ be a bounded and connected open subset of $\R^d$, $d \geq 2$, with $C^{2}$ boundary $\partial \Omega$. Given $a:=(a_{i,j})_{1 \leq i,j \leq d} \in \cC^1(\overline{\Omega};\R^{d^2})$, symmetric, i.e.,
$$ a_{i,j}(x)=a_{j,i}(x),\ x \in \Omega,\ i,j = 1,\ldots,d, $$
and fulfilling the ellipticity condition
\bel{ell}
\exists c>0,\ \sum_{i,j=1}^d a_{i,j}(x) \xi_i \xi_j \geq c |\xi|^2,\ x \in \Omega,\ \xi=(\xi_1,\ldots,\xi_d) \in \R^d,
\ee
we introduce the formal differential operator  
$$ \cA_0 u(x) :=-\sum_{i,j=1}^d \partial_{x_i} \left( a_{i,j}(x) \partial_{x_j} u(x) \right),\  x:=(x_1,\ldots,x_d) \in\Omega, $$
where $\partial_{x_i}$ denotes the partial derivative with respect to $x_i$, $i=1,\ldots,d$. 
We perturb $\cA_0$ by a potential function $q \in L^\kappa(\Omega)$, $\kappa \in (d,+\infty]$, that is lower bounded by some positive constant, 
\bel{a9}
\exists r \in (0,+\infty),\ q(x)\geq r ,\ x \in \Omega,
\ee
and define the operator $\cA_q:=\cA_0 + q$, where the notation $q$ is understood as the multiplication operator by the corresponding function. 

Next, for $T\in(0,+\infty)$, $\alpha\in(0,2)$ and
$\rho \in L^\infty(\Omega)$ obeying
\bel{eq-rho}
 0<\rho_0 \leq\rho(x) \leq\rho_M <+\infty,\ x \in \Omega, 
\ee
we consider the following initial boundary value problem (IBVP) with source term $f \in L^1(0,T;L^2(\Omega))$,
\bel{eq1}
\left\{ \begin{array}{rcll} 
(\rho(x) \partial_t^{\alpha}+\cA_q) u(t,x) & = & f(t,x), & (t,x)\in  Q:=(0,T) \times \Omega,\\
B_\star u(t,x) & = & 0, & (t,x) \in \Sigma := (0,T) \times \pd \Omega, \\  
\pd_t^k u(0,x) & = & 0, & x \in \Omega,\ k=0,\ldots,N_\alpha,
\end{array}
\right.
\ee
where 
$$ N_\alpha:= \left\{ \begin{array}{cl} 0 & \mbox{if}\ \alpha \in (0,1], \\  1 & \mbox{if}\ \alpha \in (1,2), \end{array} \right. $$
and $\partial_t^\alpha$ denotes the fractional Caputo derivative of order $\alpha$ with respect to $t$, defined by
\bel{cap} 
\pd_t^\alpha u(t,x):=\frac{1}{\Gamma(N_{\alpha}+1-\alpha)}\int_0^t(t-s)^{N_{\alpha}-\alpha}\pd_s^{N_{\alpha}+1} u(s,x) \dd s,\ (t,x) \in Q,
\ee
when $\alpha \in (0,1) \cup (1,2)$, while $\partial_t^\alpha$ is the usual first order derivative $\pd_t$ when $\alpha=1$. 
In the second line of \eqref{eq1}, $B_\star$ is either of the following two boundary operators:
\begin{enumerate}[(a)] 
\item $B_\star u:=u$,
\item $B_\star u:= \partial_{\nu_a} u$, where $\partial_{\nu_a}$ stands for the normal derivative with respect to $a=(a_{i,j})_{1 \leq i,j \leq d}$, expressed by
$$\partial_{\nu_a }w(x) := \sum_{i,j=1}^d a_{i,j}(x) \partial_{x_j} w(x) \nu_i(x),\  x \in \partial\Omega, $$
and $\nu=(\nu_1,\ldots,\nu_d)$ is the outward unit normal vector to $\partial\Omega$. 
\end{enumerate}
Otherwise stated, the IBVP \eqref{eq1} is endowed with  the homogeneous Dirichlet (resp., Neumann) boundary condition when $B_\star$ is given by (a) (resp., (b)). 

In the present paper we use the definition of  a weak solution to Problem \eqref{eq1} which is given by \cite{KY,KY3,SY} in a more general framework  by mean of the Laplace transform  with respect to the time variable (see \cite[Definition 1.1]{KY}). 
Here and in the remaining part of this paper, we use the notation $v(t,\cdot)$ as shorthand for the function $x \mapsto v(t,x)$.

According to \cite[Theorem 2.3]{KY3}, the weak solution to \eqref{eq1} exists and is unique within the class $L^1(0,T;L^2(\Omega))$, and that it enjoys the Duhamel representation formula given in Section \ref{sec-analytic}. We refer the reader to \cite{KY,KY3,KSY,SY} for the existence and the uniqueness issue of such a solution to \eqref{eq1}, as well as for its classical properties. We point out that for $\alpha=1$, the weak solution to \eqref{eq1} coincides with the classical variational $\cC^1([0,T];H^{-1}(\Omega))\cap \cC([0,T];H^1(\Omega))$-solution to the corresponding parabolic equation.

In this paper we examine the inverse problem of determining the source term $f$ appearing in the first line of \eqref{eq1}, from either internal or lateral measurement of the weak solution $u$ to \eqref{eq1}. It turns out that this  inverse problem is ill-posed in the sense that the above data do not uniquely determine $f$,  and we refer the interested reader to the Appendix where this issue is discussed in greater detail. As a consequence, the inverse source problem under investigation has to be reformulated. Different lines of research can be pursued. One possible direction is the one of assuming that the unknown function $f : Q \to \R$ depends on a restricted number of parameters of $(t,x) \in Q$. Another direction is the one of considering source terms with separated variables. In this paper, we follow the second direction,  that is, we suppose that the unknown source term belong to either of the two following sets.

 The first class of source terms under consideration takes the form 
\bel{source1} 
f(t,x)= \sigma(t) g(x),\ (t,x) \in Q,
\ee
and the corresponding inverse problem is as follows.

\textbf{Inverse Problem 1} (IP1): Determine the spatially dependent function $g$ and/or the temporal dependent function $\sigma$ from knowledge of $u_{| (0,T) \times \Omega'}$, where $u$ denotes the solution to \eqref{eq1} associated with source term $f$ given by \eqref{source1}, and $\Omega'$ is an  arbitrarily chosen non-empty open subset of $ \Omega$.

 In the same spirit, we also consider sources obtained by superposition of two source terms in the form of \eqref{source1}, that is
\bel{source1p} 
f(t,x):=\sigma(t) g(x)+\beta(t)h(x),\ (t,x) \in Q,
\ee
which suggests investigating the following inverse problem:

\textbf{Inverse Problem 1'} (IP1'): Assuming that  $\sigma$ and $\beta$ are known, retrieve the two spatially dependent functions $g$ and $h$ from knowledge of $u_{| (0,T) \times \Omega'}$, where, $u$ and $\Omega'$ are the same as in (IP1).

The second set of source terms is made of functions which do not dependent upon the last component $x_d$ of the space variable $x=(x_1,\ldots,x_{d-1},x_d) \in \Omega$. More precisely, given $L \in (0,+\infty)$ and 
 an open bounded subset $\omega \subset \R^{d-1}$,  we set 
$\Omega_0:=\omega \times (-L,L)$  and we assume that
\bel{cy}
\overline{\Omega_0} \subset \Omega.
\ee
We consider a source term $f$ expressed by
\bel{source2} 
f(t,x',x_d):=\left\{ \begin{array}{cl}g(t,x') h(x_d) & \mbox{if}\ (t,x',x_d) \in Q_0 := (0,T) \times \Omega_0, \\
0 & \mbox{if}\ (t,x',x_d) \in Q \setminus Q_0, \end{array} \right.
\ee
where $x'=(x_1,\ldots,x_{d-1})$ denotes the $d-1$ first components of $x$. We investigate the following inverse problem.

\textbf{Inverse Problem 2} (IP2):  Given  an arbitrary non-empty open subset $\gamma$ of $ \partial\Omega$, determine $h$ and/or $g$ by $( u_{| (0,T) \times \gamma},\partial_\nu u_{| (0,T) \times \gamma})$. Here $u$ is the solution to \eqref{eq1} associated with $f$ given by \eqref{source2} and $\partial_\nu u := \nabla u \cdot \nu$.

\subsection{Motivations}
Depending on  whether $\alpha=1$ or $\alpha \in (0,1) \cup (1,2)$, the system \eqref{eq1} models typical or anomalous diffusion phenomena appearing in several areas of applied sciences, such as geophysics, environmental science and biology, see, e.g., \cite{JR,NSY}. In this context, sub-diffusive (resp., super-diffusive) processes are described by \eqref{eq1} with $\alpha \in (0,1)$ (resp., $\alpha \in (1,2)$), and the kinetic equation \eqref{eq1} may be seen as a corresponding macroscopic model to microscopic diffusion phenomena driven by continuous time random walk, see, e.g., \cite{MK}.  Thus, inverse problem (IP1) (resp., (IP2)) is to know whether time and space varying source terms can be retrieved by  internal (resp, lateral) data, in  the presence of typical or anomalous diffusion.  For instance, such problems occur in the context of underground diffusion of contaminants, see \cite{JR,NSY}, and we point out that they can be adapted to the recovery of moving sources  as in \cite{Ki}.

\subsection{A short review of inverse source problems}

Inverse problems are generally nonlinear in the sense that the unknown parameter of the problem depends in a nonlinear way on the data,  although the direct model is linear in $u$. For instance, this is the case for inverse coefficients problems or inverse spectral problems, see, e.g., \cite{I,LLY2}. However, this is no longer true for inverse source problems as the dependence of the unknown source term is linear with respect to the (internal or lateral) data. When this remarkable feature of inverse source problems does not guarantee that they are easy to solve, it certainly does explain why they have become increasingly popular among the mathematical community. 

This is particularly true when typical diffusion is considered, i.e. when $\alpha=1$ in \eqref{eq1}, where the inverse problem of determining a time independent source term has been extensively studied by several authors in
\cite{choY,cho,KSS,yam,yam1}, the list being non exhaustive, and also in \cite{IY},  which relies on the Bukhgeim-Klibanov method introduced 
in \cite{BK}. As for inverse time independent source problems with $\alpha \in (0,1) \cup (1,2)$, we refer the reader to \cite{JLLY}, and to \cite{JR0,JR,KOSY, KSY,LKS, RZ} for inverse coefficient problems  for anomalous diffusion equations.

In all the above mentioned inverse source results, the source term was  time-independent. The stability issue in determining the temporal source term of time-fractional diffusion equations was examined in \cite{FK,SY}, and in the same context, the time and space dependent factor of suitable source terms is reconstructed in \cite{KY2}. As for the determination of time dependent sources in parabolic equations, we refer the reader to \cite{AE,EH,Ik,KW}, and to \cite{BHKY,HK,HKLZ} for the same problem with hyperbolic equations. 

Let us now collect the main achievements of this article in the coming section.

\subsection{Main results}
\label{sec-mainres}
We start by stating the following weak uniqueness result for the IBVP \eqref{eq1} associated with (IP1).
\begin{thm}
\label{t1} 
Let $\sigma \in L^1(0,T)$ be supported in $[0,T)$ and assume that $f$ is given by \eqref{source1}, where $g \in L^2(\Omega)$. 
Denote by $u$ the weak solution to \eqref{eq1}. Then, for all $\alpha\in(0,2)$ and  any  non-empty open subset $\Omega' \subset \Omega$, we have:
$$  u_{| (0,T) \times \Omega'}=0  \Longrightarrow  f=0\ \mbox{in}\ Q. $$
\end{thm}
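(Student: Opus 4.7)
The plan is to combine the Duhamel representation of $u$ with the Titchmarsh convolution theorem, the time-analyticity of the Mittag-Leffler propagator, and unique continuation for a second-order elliptic operator. Let $\cL := \rho^{-1}\cA_q$ be the realization on $L^2(\Omega,\rho dx)$ dictated by $B_\star$; it is a self-adjoint positive operator with compact resolvent, with eigenvalues $\{\lambda_n\}_{n\geq 1}\subset(0,+\infty)$ and an associated $L^2(\Omega,\rho dx)$-orthonormal basis of eigenfunctions $\{\phi_n\}_{n\geq 1}$. Setting $g_n := (g,\phi_n)_{L^2(\Omega)}$ and using the Duhamel formula of Section \ref{sec-analytic}, I would write
\[ u(t,\cdot) = \int_0^t \sigma(s)\, G(t-s,\cdot)\, ds, \qquad G(\tau,x) := \sum_{n\geq 1} g_n\, \phi_n(x)\, \tau^{\alpha-1} E_{\alpha,\alpha}(-\lambda_n \tau^\alpha), \]
noting that the $L^2(\Omega)$-valued map $\tau\mapsto G(\tau,\cdot)$ extends holomorphically to $\{\mathrm{Re}\,\tau>0\}$.

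If $\sigma\equiv 0$ the result is immediate, so I assume $\sigma\not\equiv 0$. Fubini combined with the hypothesis yields, for a.e.\ $x\in\Omega'$, the scalar Volterra identity $(\sigma*G(\cdot,x))(t)=0$ on $(0,T)$. Since $\sigma\neq 0$ is supported in $[0,T)$, the Titchmarsh convolution theorem forces $G(\cdot,x)$ to vanish on a right-neighborhood of $0$, and the holomorphy of $G(\cdot,x)$ on $(0,\infty)$ then promotes this to $G(\tau,x)=0$ for all $\tau>0$ and a.e.\ $x\in\Omega'$. Applying the Laplace transform in $\tau$ term by term via $\int_0^\infty e^{-p\tau}\tau^{\alpha-1}E_{\alpha,\alpha}(-\lambda_n\tau^\alpha)\,d\tau=(p^\alpha+\lambda_n)^{-1}$ gives
\[ \sum_{n\geq 1} \frac{g_n\,\phi_n(x)}{\zeta+\lambda_n} = 0 \qquad \text{for a.e.\ } x\in\Omega' \text{ and } \zeta=p^\alpha,\ \mathrm{Re}\,p>0. \]

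Viewed as a meromorphic function of $\zeta\in\mathbb{C}$ vanishing on a set with accumulation points, the left-hand side is identically zero, and its residue at $\zeta=-\lambda_m$ equals $(P_m g)(x)$, where $P_m$ denotes the spectral projection onto the $\lambda_m$-eigenspace. Thus $P_m g$ vanishes a.e.\ on $\Omega'$ and solves $(\cA_q-\lambda_m\rho)P_m g=0$ in $\Omega$ with $B_\star(P_m g)=0$. The weak unique continuation principle for second-order elliptic operators with $L^\kappa$-potential ($\kappa>d$) then yields $P_m g\equiv 0$ in $\Omega$ for every $m$, and summing over $m$ gives $g\equiv 0$, whence $f\equiv 0$.

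The main obstacle is the Titchmarsh step: one must either reduce carefully to a scalar Volterra identity pointwise in $x$ (checking that $G(\cdot,x)\in L^1_{\mathrm{loc}}(0,\infty)$ for a.e.\ $x$, which does hold for $\alpha\in(0,1)$ since $\tau^{\alpha-1}$ is integrable near zero) or invoke a vector-valued version on $L^2(\Omega')$, while in either case preserving enough regularity in $\tau$ to run the analytic-continuation step.
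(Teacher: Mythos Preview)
Your argument is correct and takes a genuinely different route from the paper. Both proofs end the same way---the resolvent series $\sum_n g_{n}\varphi_{n}/(\lambda_n+\zeta)$ vanishing on $\Omega'$, residues, elliptic unique continuation---but they differ in how $\sigma$ is stripped away. The paper extends $u$ itself analytically in time beyond $T$ (Proposition~\ref{pr1}, which already uses $\mathrm{supp}\,\sigma\subset[0,T)$), then takes the Laplace transform of $u$ and splits into the cases $\widehat\sigma\equiv0$ or not. You instead write $u=\sigma*G$ with $G(\tau)=S(\tau)g$ and use Titchmarsh to force $G|_{\Omega'}=0$ on $(0,T-\inf\mathrm{supp}\,\sigma)$, then invoke analyticity of the propagator $G$ rather than of $u$. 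Your route avoids Proposition~\ref{pr1} and is arguably more elementary; the paper's route, on the other hand, carries over unchanged to the distributed-order setting of Section~\ref{sec-DO}, where the Mittag--Leffler series for $G$ is unavailable. The obstacle you flag is minor: an $L^2(\Omega')$-valued Titchmarsh theorem follows from the scalar one by testing against a countable separating family in $L^2(\Omega')$, and real-analyticity of $\tau\mapsto S(\tau)g$ on $(0,\infty)$---which is all you actually need, not the full half-plane---is immediate from the contour integral \eqref{tt1e}. Incidentally, $\tau^{\alpha-1}\in L^1_{\mathrm{loc}}$ near $0$ for every $\alpha\in(0,2)$, so the integrability concern is no worse for $\alpha>1$.
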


As a corollary, we have the following unique identification result which provides a positive answer to (IP1).

\begin{cor}
\label{c1} 
For $j=1,2$, let $\sigma_j \in L^1(0,T)$  satisfy
$\supp \sigma_j \subset [0,T)$ and let $g_j \in L^2(\Omega)$.
Assume that either of the two following conditions is fulfilled
\begin{enumerate}[(i)]
\item $\sigma_1=\sigma_2$  and is not identically zero in $(0,T)$,
\item $g_1=g_2$  and is not identically zero in $\Omega$,
\end{enumerate}
and let $u_j$ denote the solution to \eqref{eq1} with $f=f_j$, where 
$$f_j(t,x):=\sigma_j(t) g_j(x),\ (t,x) \in Q. $$
Then, the following implication holds for any non-empty open subset $\Omega' \subset \Omega$:
$$
 u_1=u_2\ \mbox{in}\ (0,T) \times \Omega'  \Longrightarrow  \sigma_1= \sigma_2\ \mbox{in}\ (0,T)\ \mbox{and}\  g_1= g_2\ \mbox{in}\  \Omega .
$$
\end{cor}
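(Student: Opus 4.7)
The plan is to derive this corollary as a direct consequence of Theorem \ref{t1} and the linearity of the IBVP \eqref{eq1}. Setting $u := u_1 - u_2$, the linearity of the equation, together with the vanishing of the boundary and initial data, ensures that $u$ is the weak solution of \eqref{eq1} associated with the source $f := f_1 - f_2 = \sigma_1 g_1 - \sigma_2 g_2$. The hypothesis $u_1 = u_2$ on $(0,T) \times \Omega'$ then reads $u_{|(0,T) \times \Omega'} = 0$. The argument splits naturally into the two parallel cases dictated by (i) and (ii).

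In case (i), with $\sigma := \sigma_1 = \sigma_2 \in L^1(0,T)$ supported in $[0,T)$, one rewrites $f(t,x) = \sigma(t)\bigl(g_1(x) - g_2(x)\bigr)$, so that $f$ has precisely the separated form \eqref{source1}, with spatial factor $g := g_1 - g_2 \in L^2(\Omega)$. Theorem \ref{t1} then forces $f = 0$ a.e. in $Q$. Since $\sigma$ is not identically zero, there exists a measurable set $E \subset (0,T)$ of positive Lebesgue measure on which $\sigma$ does not vanish; for any $t \in E$, the identity $\sigma(t)(g_1 - g_2) = 0$ a.e.\ in $\Omega$ yields $g_1 = g_2$ in $L^2(\Omega)$, which combined with the assumed $\sigma_1 = \sigma_2$ gives the desired conclusion.

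Case (ii) is strictly symmetric: with $g := g_1 = g_2 \in L^2(\Omega)$ not identically zero, one now writes $f(t,x) = \bigl(\sigma_1(t) - \sigma_2(t)\bigr) g(x)$, which is again of the form \eqref{source1}. Theorem \ref{t1} then gives $f = 0$ a.e.\ in $Q$; selecting $x$ in a positive-measure subset of $\Omega$ on which $g$ does not vanish, one concludes that $\sigma_1 - \sigma_2 = 0$ a.e.\ in $(0,T)$.

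The main (and essentially only) obstacle is a bookkeeping one: verifying that in each case the difference $f_1 - f_2$ falls within the hypotheses of Theorem \ref{t1}, that is, that $g_1 - g_2 \in L^2(\Omega)$ and that $\sigma_1 - \sigma_2 \in L^1(0,T)$ remains supported in $[0,T)$. The subsequent passage from $\sigma(t)(g_1-g_2) \equiv 0$ (resp.\ $(\sigma_1-\sigma_2)(t) g(x) \equiv 0$) to the pointwise a.e.\ identification is a standard measure-theoretic consequence of a function of one variable being nontrivial on a positive-measure set. No further analytic input beyond Theorem \ref{t1} is required.
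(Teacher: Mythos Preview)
Your proof is correct and follows essentially the same approach as the paper: set $u=u_1-u_2$, rewrite $f_1-f_2$ in the separated form \eqref{source1} under each of the hypotheses (i) and (ii), apply Theorem~\ref{t1} to conclude $f=0$ a.e.\ in $Q$, and then use the nontriviality of the known factor to deduce that the other factor vanishes. Your only addition is a slightly more explicit measure-theoretic justification of the final step, which the paper leaves implicit.
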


Actually, Corollary \ref{c1} remains valid upon removing the hypothesis on the support of $\sigma_j$, $j=1,2$, which was inherited from Theorem \ref{t1}, but this is at the expense of a greater regularity assumption on these two functions.  We refer the reader to Theorem \ref{t1bis} in Section \ref{sec-or} for the statement of the corresponding result.

We turn now to investigating (IP2). Prior to doing so we state the following uniqueness result for  extra Neumann or Dirichlet data.

\begin{thm}
\label{t2} 
 Let $\rho=1$ a.e. in $\Omega$ and set $\mathcal A_q=-\Delta$. 
Assume \eqref{cy} and  suppose that $\Omega \setminus \overline{\Omega_0}$ is connected.
For $\zeta \in\left(\frac{3}{4},1\right)$ and $r\in \left(\frac{1}{\alpha(1-\zeta)},+\infty\right)$, let $g \in L^r(0,T;L^2(\omega))$ be supported in $[0,T)\times\overline{\omega}$ and let $h \in L^2(-L,L)$. Then, for all $\alpha\in(0,2)$ there exists a unique solution $u \in \cC([0,T]; H^{2\zeta}(\Omega))$ to the IBVP \eqref{eq1} with source term $f$ defined by \eqref{source2}. Moreover, for any non-empty subset $\gamma$ of $\partial\Omega$ we have
\bel{t2a} 
 B_\star^*u_{|(0,T)\times\gamma}=0  \Longrightarrow  f=0\ \mbox{in}\ Q,
\ee

where
$$B_\star^* u:= \left\{ \begin{array}{cl} \partial_{\nu} u & \mbox{if}\ B_\star u=u,\\ u & \mbox{if}\ B_\star u= \partial_{\nu} u.
\end{array} \right.$$
\end{thm}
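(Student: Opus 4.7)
I plan a four-step argument.

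\textbf{Step 1 (Well-posedness).} Let $\{(\lambda_n,\varphi_n)\}_{n\ge 1}$ be the spectral resolution of $-\Delta$ on $\Omega$ with $B_\star$ boundary conditions. The Duhamel representation of Section \ref{sec-analytic} gives $u$ as a series in the Mittag--Leffler functions $E_{\alpha,\alpha}(-\lambda_n t^\alpha)$, and combining the classical decay bound $|E_{\alpha,\alpha}(-\lambda s^\alpha)|\lesssim (1+\lambda s^\alpha)^{-1}$ with H\"older's inequality in time under the constraint $r>1/(\alpha(1-\zeta))$ yields $u\in \mathcal{C}([0,T];H^{2\zeta}(\Omega))$. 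Since $2\zeta>3/2$, traces of $u$ and $\partial_\nu u$ on every $C^1$ hypersurface inside $\overline\Omega$ are well defined and continuous in $t$.

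\textbf{Step 2 (Unique continuation up to $\partial\Omega_0$).} Extend $f$ by zero on $(T,+\infty)$, let $\bar u$ be the corresponding weak solution of \eqref{eq1} on $(0,+\infty)\times\Omega$, and take the Laplace transform $\hat v(p,x)=\int_0^{+\infty}e^{-pt}\bar v(t,x)\,\mathrm{d}t$ for $\Re p$ large. The vanishing initial data reduce the Caputo Laplace rule to $\mathcal{L}[\partial_t^\alpha \bar u](p,\cdot)=p^\alpha \hat u(p,\cdot)$, so that
\[
(p^\alpha-\Delta)\hat u(p,\cdot)=\hat f(p,\cdot)\text{ in }\Omega,\quad B_\star \hat u(p,\cdot)=0\text{ on }\partial\Omega,
\]
with zero Cauchy data $\hat u(p,\cdot)|_\gamma=\partial_\nu \hat u(p,\cdot)|_\gamma=0$ on $\gamma$ (combining the boundary condition with the hypothesis $B_\star^* u|_{(0,T)\times\gamma}=0$). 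In $\Omega\setminus\overline{\Omega_0}$ the source $\hat f$ vanishes, so the Aronszajn--Calder\'on unique continuation principle for the second-order elliptic operator $p^\alpha-\Delta$ gives $\hat u(p,\cdot)\equiv 0$ in a neighborhood of $\gamma$, and the assumed connectedness of $\Omega\setminus\overline{\Omega_0}$ propagates this to the whole exterior. Injectivity of the Laplace transform yields $u\equiv 0$ in $(0,T)\times(\Omega\setminus\overline{\Omega_0})$, and continuity of the traces gives $u=\partial_\nu u=0$ on $(0,T)\times\partial\Omega_0$.

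\textbf{Steps 3--4 (Reduction and Paley--Wiener).} Assume $h\not\equiv 0$, otherwise $f\equiv 0$ trivially. Inside $\Omega_0$, $\hat u$ solves $(p^\alpha-\Delta_{x'}-\partial_{x_d}^2)\hat u=\hat g(p,x')h(x_d)$ with vanishing Cauchy data on $\partial\Omega_0$. Expand $\hat u=\sum_k \hat u_k(p,x_d)\psi_k(x')$ and $\hat g=\sum_k \hat g_k(p)\psi_k$ in the Dirichlet eigenbasis $\{(\kappa_k,\psi_k)\}$ of $-\Delta_{x'}$ on $\omega$; each coefficient satisfies
\[
-\partial_{x_d}^2 \hat u_k+(p^\alpha+\kappa_k)\hat u_k=\hat g_k(p)\,h(x_d),\quad x_d\in(-L,L),
\]
together with the four Cauchy conditions $\hat u_k(p,\pm L)=\partial_{x_d}\hat u_k(p,\pm L)=0$ inherited from the top and bottom of the cylinder. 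Extending $\hat u_k$ by zero to $\mathbb{R}$ produces an $H^2$-function $\tilde U_k$ with support in $[-L,L]$ still solving the ODE on $\mathbb{R}$ with right-hand side $\hat g_k(p)h\chi_{(-L,L)}$. Fourier transforming in $x_d$ gives
\[
(p^\alpha+\kappa_k+\xi^2)\,\mathcal{F}_{x_d}[\tilde U_k](\xi)=\hat g_k(p)\,H(\xi),\quad \xi\in\mathbb{R},
\]
where $H(\xi)=\int_{-L}^{L}h(x_d)e^{-i\xi x_d}\,\mathrm{d}x_d$. By Paley--Wiener, both sides extend to entire functions of $\xi\in\mathbb{C}$ of exponential type at most $L$, with $H\not\equiv 0$. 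Evaluating at the zeros $\xi=\pm i\sqrt{p^\alpha+\kappa_k}$ of the polynomial forces $\hat g_k(p)H(\pm i\sqrt{p^\alpha+\kappa_k})=0$; if $\hat g_k(p_0)\neq 0$ for some $p_0>0$, analyticity of $\hat g_k$ on $\{\Re p>0\}$ would force $H$ to vanish along the non-constant analytic arc $p\mapsto i\sqrt{p^\alpha+\kappa_k}$ near $p_0$, whence $H\equiv 0$ by the identity theorem, a contradiction. Hence $\hat g_k(p)=0$ for every $k,p$, and completeness of $\{\psi_k\}$ together with injectivity of the Laplace transform yields $g\equiv 0$ in $(0,T)\times\omega$, whence $f\equiv 0$ in $Q$.

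The most delicate step is the unique continuation of Step 2, which must be applied uniformly in the Laplace parameter $p$ while ensuring compatibility of the Laplace transform with the Caputo derivative and with the extension of $\bar u$ past $t=T$. The Paley--Wiener rigidity of Steps 3--4 is conceptually the heart of the argument: only a discrete amount of over-determination (the vanishing of $\partial_{x_d}\hat u_k$ at $x_d=\pm L$), combined with the finite exponential type of $H$ forced by the compact support of $h$, suffices to kill the source globally.
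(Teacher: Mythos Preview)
Your overall architecture---Laplace transform in time, elliptic unique continuation to reach $\partial\Omega_0$, then a Paley--Wiener/Fourier rigidity argument inside the cylinder---matches the paper's strategy. There is, however, one genuine gap and one genuine methodological difference worth recording.

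\textbf{The gap in Step 2.} You assert that the Laplace transform $\hat u(p,\cdot)$ has zero Cauchy data on $\gamma$. But the hypothesis $B_\star^* u|_{(0,T)\times\gamma}=0$, together with the boundary condition $B_\star u=0$, gives $u=\partial_\nu u=0$ only on $(0,T)\times\gamma$; you have extended $\bar u$ to $(0,+\infty)$ and $\hat u(p,\cdot)|_\gamma=\int_0^\infty e^{-pt}\bar u(t,\cdot)|_\gamma\,dt$ need not vanish unless you know the trace vanishes for \emph{all} $t>0$. This is exactly what the time-analyticity result of Proposition~\ref{pr1} provides: since $f$ is supported in $[0,T)\times\overline{\Omega}$, the solution extends $L^2(\Omega)$-analytically in $t$ past $T$, and (after upgrading to $H^{2\zeta}$-valued analyticity via your Step~1 estimates) the trace vanishes on an interval of analyticity, hence everywhere. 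The paper sidesteps the trace issue altogether by enlarging $\Omega$ to a domain $\Omega_\star$ with $\partial\Omega\setminus\partial\Omega_\star\subset\gamma$, extending $u$ by zero to $\Omega'=\Omega_\star\setminus\overline{\Omega}$, and then invoking Proposition~\ref{pr1} on $\Omega_\star$: analyticity now propagates the \emph{interior} vanishing $u|_{(0,T)\times\Omega'}=0$ to all $t>0$, and one only needs weak unique continuation from an open set, never boundary Cauchy data. You flag this step as ``delicate'' in your closing paragraph, but the actual mechanism (time-analyticity from compact temporal support of $f$) is not supplied.

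\textbf{The alternative in Steps 3--4.} Your reduction inside $\Omega_0$ is genuinely different from the paper's and is correct. The paper tests the equation on $\Omega_0$ against the explicit solutions $e^{-ik\theta\cdot x'}e^{(p^\alpha+k^2)^{1/2}x_d}$ of $(-\Delta+p^\alpha)v=0$, integrates by parts (using the zero Cauchy data on $\partial\Omega_0$), and factors the result as
\[
\left(\int_\omega G(p,x')e^{-ik\theta\cdot x'}\,dx'\right)\left(\int_{-L}^{L} h(x_d)e^{(p^\alpha+k^2)^{1/2}x_d}\,dx_d\right)=0.
\]
Since the second factor is entire in its argument and $h\not\equiv0$, it is nonzero on some interval in $k$; this forces the first factor---the $(d-1)$-dimensional Fourier transform of the compactly supported $G(p,\cdot)$---to vanish on an annulus, hence everywhere by real-analyticity. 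Your route (Dirichlet eigen-expansion in $x'$, then one-dimensional Fourier in $x_d$, then evaluation at the zeros $\xi=\pm i\sqrt{p^\alpha+\kappa_k}$) reaches the same conclusion but trades the CGO test functions for a modal decomposition. The paper's version is slightly more direct in that it never needs a basis on~$\omega$, while yours makes the over-determination (four scalar conditions at $x_d=\pm L$ per mode) more visibly the source of rigidity.
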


The coming result, which is a byproduct of Theorem \ref{t2}, likewise Corollary \ref{c1} follows from Theorem \ref{t1}, answers the question raised by (IP2).

\begin{cor}
\label{c2} 
 Let  $\rho$, $\mathcal A_q$, $\Omega_0$ and $r$ be the same as in Theorem \ref{t2}.
For $j=1,2$, let $g_j \in L^r(0,T; L^2(\omega))$  satisfy
$\supp g_j \subset [0,T) \times \overline{\omega}$, let $h_j \in L^2(-L,L)$.  By $u_j$ we denote the solution  to \eqref{eq1} with  source term $f_j$, defined by \eqref{source2} where $(g,h)$ is replaced by $(g_j,h_j)$.

Assume either of the two following conditions
\begin{enumerate}[(i)]
\item $h_1=h_2$,
\item $g_1=g_2$.
\end{enumerate}
Then,  for any non-empty relatively open subset $\gamma$ of $\pd \Omega$, 
$$
\partial_\nu^ku_1(t,x)=\partial_\nu^ku_2(t,x),\ (t,x)\in (0,T)\times\gamma,\ k=0,1, 
$$
yields $g_1=g_2$ in $(0,T) \times \omega$ and $h_1=h_2$ in $(-L,L)$.
\end{cor}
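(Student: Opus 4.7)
The plan is to reduce directly to Theorem \ref{t2} via linearity, exactly as Corollary \ref{c1} reduces to Theorem \ref{t1}. Setting $u := u_1 - u_2$ and $f := f_1 - f_2$, the linearity of \eqref{eq1} ensures that $u$ is the weak solution associated with the source $f$ and with homogeneous initial and boundary data. In both configurations the residual source still factorizes as in \eqref{source2}: in case (i), with $h := h_1 = h_2 \in L^2(-L,L)$ and $g_1 - g_2 \in L^r(0,T;L^2(\omega))$ supported in $[0,T) \times \overline{\omega}$, one has $f(t,x',x_d) = (g_1-g_2)(t,x')\,h(x_d)$ in $Q_0$ and $f=0$ elsewhere; case (ii) is analogous with $g := g_1 = g_2$ and $h_1 - h_2 \in L^2(-L,L)$. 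Hence $f$ fits the regularity and support framework in which Theorem \ref{t2} is formulated.

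Next I would convert the two-parameter Cauchy data assumption into the hypothesis of \eqref{t2a}. The identities $\partial_\nu^k u_1 = \partial_\nu^k u_2$ on $(0,T) \times \gamma$ for $k=0,1$ read $u_{|(0,T) \times \gamma} = 0$ and $\partial_\nu u_{|(0,T) \times \gamma} = 0$. Combined with the homogeneous boundary condition $B_\star u = 0$ inherited by $u$ on $\Sigma$, this gives $B_\star^\ast u_{|(0,T) \times \gamma} = 0$ in both the Dirichlet case (where $B_\star^\ast u = \partial_\nu u$) and the Neumann case (where $B_\star^\ast u = u$). Applying the implication \eqref{t2a} in Theorem \ref{t2} then produces $f \equiv 0$ in $Q$.

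The concluding step is to translate $f \equiv 0$ back into the claimed identities. In case (i), one may assume $h \not\equiv 0$ on $(-L,L)$ (otherwise $f_1 = f_2 \equiv 0$ and the conclusion on $g_j$ is vacuous); picking a set $E \subset (-L,L)$ of positive measure on which $h \neq 0$ and fixing any $x_d \in E$ in the relation $(g_1-g_2)(t,x')\,h(x_d) = 0$ in $Q_0$, one deduces $g_1 = g_2$ a.e.\ in $(0,T) \times \omega$, while $h_1 = h_2$ is the standing hypothesis. Case (ii) is entirely symmetric. I do not anticipate any serious obstacle here: the only verification worth pausing on is that the difference source $f$ meets the integrability and support assumptions of Theorem \ref{t2}, which is immediate from the hypotheses imposed on the pairs $(g_j,h_j)$ and the linearity of the factorization.
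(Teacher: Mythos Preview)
Your proposal is correct and follows exactly the route the paper indicates: the paper omits the proof of Corollary~\ref{c2}, stating only that it ``follows the same path as the one of Corollary~\ref{c1} from Theorem~\ref{t1}'', i.e.\ linearize by setting $u=u_1-u_2$, observe that the residual source keeps the factorized form~\eqref{source2}, invoke Theorem~\ref{t2} via~\eqref{t2a}, and read off the conclusion. Your observation that both Cauchy data $u_{|(0,T)\times\gamma}=0$ and $\partial_\nu u_{|(0,T)\times\gamma}=0$ are available (so $B_\star^*u_{|(0,T)\times\gamma}=0$ holds in either the Dirichlet or Neumann case) is exactly the right bridge, and your caveat that $h\not\equiv 0$ (resp.\ $g\not\equiv 0$) is needed for the final implication mirrors the explicit non-triviality hypothesis present in Corollary~\ref{c1} but omitted from the statement of Corollary~\ref{c2}.
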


 In the same way as Corollary \ref{c1} is changed into Theorem \ref{t1bis}, the uniqueness result of Corollary \ref{c2} translates into the one of Theorem \ref{t2bis} for non-compactly supported sources in the form of \eqref{source2}.

We also point out that the statement of Theorem \ref{t1} (resp., Theorem \ref{t2}) can be adapted to the framework of distributed order fractional diffusion equations, and we refer the reader to Theorem \ref{t5} (resp., Theorem \ref{t6}) in Section \ref{sec-DO} for the corresponding result. \\

The derivation of a source identification result from a uniqueness result such as Theorem \ref{t1}, is rather standard in the analysis of inverse source problems, see, e.g., \cite{choY,cho,HK,JLLY}. The strategy used in these four articles to determine the spatial part of the source \eqref{source1} under the assumption that its temporal part $\sigma$ is known, is to turn the non-homogeneous diffusion equation under study into a homogeneous one by  reducing the source information into the initial data. This requires that $\sigma(0)\neq0$ and $\sigma \in \mathcal C^1([0,T])$. The condition $\sigma(0) \neq 0$ suggests that the source should be switched on before the data are collected,  which is quite unexpected considering that only the spatial part $g$ of the source is retrieved here. 
It turns out that this  extra condition $\sigma(0) \neq 0$ was removed in \cite[Theorem 2]{HK} for an evolutionary equation over the infinite time range $(0,+\infty)$. As far as we know, Theorem \ref{t1} is one of the few source identification results available in the mathematical literature for a system  evolving on a finite time interval $(0,T)$. Furthermore, we emphasize that the second condition $\sigma\in\mathcal C^1([0,T])$ requested by
\cite{choY,cho,HK,JLLY} is weakened to $\sigma\in L^1(0,T)$ in Theorem \ref{t1} and Corollary \ref{c1}.  Here we need a new approach since the $L^1$-regularity is too weak for  arguing as in \cite{JLLY}. 

In Theorem \ref{t1}, either of the two terms $\sigma$ or $g$ appearing on the right hand side of \eqref{source1}, is retrieved when the other one is known. This result is very similar to the ones of \cite{choY,cho,HK,JLLY}. On the other hand, Theorem \ref{t1bis} claims simultaneous identification of $\sigma$ and $g$, but this is at the expense of greater regularity on $\alpha$ and upon assuming partial knowledge of $\sigma$.  More precisely, it is requested that up to some fixed time $t_0 \in (0,T)$, the function $t \mapsto \sigma(t)$ be known and depend analytically on $t$. In this respect, Theorem \ref{t2} and Corollary \ref{c2} may be seen as an alternative approach to Theorem \ref{t1bis} for recovering a source term depending on the time variable and all the space variables excepting one.

A key ingredient in the derivation of Theorems \ref{t1} and \ref{t2} is the time analyticity of the solution to \eqref{eq1}, exhibited in Proposition \ref{pr1}. While this  analyticity property is a classical feature of the solution  when $\alpha=1$, its  derivation requires a careful treatment for $\alpha \in (0,1) \cup (1,2)$ and is based on the representation formula \cite[Theorem 1.1 and Remark 1]{KSY} of the solution to \eqref{eq1}.

\subsection{Outline}

The two theorems and the two corollaries stated above are  proved in 
Section \ref{sec-p1}. They rely on suitable analytic properties of the solution to \eqref{eq1}, which are established in Section \ref{sec-analytic}. Section \ref{sec-or} contains two identification results  for non-compactly supported time-dependent source terms in the form of \eqref{source1} or \eqref{source2}, which are similar to the ones stated in Corollaries \ref{c1} and \ref{c2}.
In Section \ref{sec-DO}, we adapt the uniqueness results of Theorems \ref{t1} and \ref{t2} to the framework of distributed order diffusion equations. 
In Section \ref{sec-NR} we build an iterative method providing numerical reconstructions of the two-dimensional space-varying part of unknown sources \eqref{source1} when $\alpha \in (1,2)$.
Section \ref{sec-IP1prime} is devoted to the study of (IP1) with $\alpha=1$. 
Finally, in the Appendix we discuss the obstruction to unique determination of the source term in \eqref{eq1}, from either internal or boundary data.


\section{Direct problem: representation and time-analyticity of the solution}
\label{sec-analytic}

In this section we establish time-analytic properties of the weak solution $u$ to \eqref{eq1}. Their derivation is based on an appropriate representation formula of $u$, that is borrowed from \cite{KSY}.

\subsection{A representation formula}
\label{sec-Duhamel}
Assume \eqref{ell}: Let $\alpha \in (0,2)$, let $\rho \in L^\infty(\Omega)$ fulfill \eqref{eq-rho}, and let $q \in L^\kappa(\Omega)$, with $\kappa \in (d,+\infty]$, satisfy \eqref{a9}. Let $A_q$ be the self-adjoint operator in $L^2(\Omega)$, generated by the closed sesquilinear form  
$$ (u,v) \mapsto  \sum_{i,j=1}^d \left( a_{i,j}(x) \pd_{x_i} u(x) \pd_{x_j} v(x) + q(x) u(x) v(x) \right)dx,\ u, v \in V, $$
where $V:=H_0^{1}(\Omega)$ if $\mathcal{A}_q$ is endowed with a homogeneous Dirichlet boundary condition, while $V:=H^1(\Omega)$ if the boundary condition attached to $\mathcal{A}_q$ is of Neumann type. Note that here we restrict the space $L^2(\Omega)$ to real valued functions. Otherwise stated, $A_q$ is the (positive) self-adjoint operator in $L^2(\Omega)$, acting as $\mathcal{A}_q$ on its domain $D(A_q)$, dense in $L^2(\Omega)$. We denote by $A_0$ the operator $A_q$ when $q=0$ a.e. in $\Omega$. In light of \eqref{ell}-\eqref{a9}, $D(A_q)$ is independent of $q$ (see, e.g., \cite[Section 2.1]{KSY}) and it is embedded in $H^2(\Omega)$:
\bel{dom-A}
D(A_q) = D(A_0) \subset H^2(\Omega).
\ee
Next we introduce the operator $A_{q,\rho}:=\rho^{-1} A_q$, with domain
\bel{dom-Ar}
D(A_{q,\rho}) = D(A_q),
\ee
which is positive and self-adjoint in the weighted-space $L_\rho^2(\Omega) := L^2(\Omega;\rho dx)$. Evidently, $A_{q,\rho}$ is self-adjoint in $L_\rho^2(\Omega)$. For all $p \in \C \setminus \R_-$, the operator $A_q + \rho p^\alpha$, where $A_q$ is defined in Section \ref{sec-mainres}, is boundedly invertible in $L^2(\Omega)$, by virtue of \cite[Proposition 2.1]{KSY}. Moreover, in view of \cite[Eq. (2.4)-(2.5)]{KSY}, the following resolvent estimate
\bel{res-est} 
\norm{(A_q+\rho p^\alpha)^{-1}}_{\cB(L^2(\Omega))} \leq C \abs{p}^{-\alpha},\ p \in \C \setminus \R_-, 
\ee
holds with $C=\rho_0^{-1} \max \left\{ 2 , \sin (\alpha \arctan( (3 \rho_M)^{-1} \rho_0 ) )^{-1} \right\}$.
Here, $(A_q + \rho p^\alpha)^{-1}$ denotes the resolvent operator of $A_q + \rho p^\alpha$ and $\cB(L^2(\Omega))$ is the space of linear bounded operators in $L^2(\Omega)$. 
These two results were established for $\alpha \in (0,1)$ and for the form domain $V=H_0^1(\Omega)$ in \cite{KSY}, but they extend to $\alpha \in [1,2)$ and $V=H^1(\Omega)$ in a straightforward way.

Next, for all $f \in L^1(0,T;L^2(\Omega))$, the weak solution $u$ to \eqref{eq1} reads
\bel{tt1d}
u(t,\cdot)=\int_0^t S(t-s) f(s,\cdot)ds,\ t \in (0,T),
\ee
where we have set
\bel{tt1e}
S(t) h :=\int_{\gamma(\epsilon,\theta)} e^{tp}(A_q+\rho p^\alpha)^{-1}h dp,\ t \in (0,+\infty),\ h \in L^2(\Omega).
\ee
Here, $\epsilon$ is arbitrary in $(0,+\infty)$, $\theta$ can be any angle in $\left( \frac{\pi }{2}, \min \left( \pi, \frac{\pi}{\alpha} \right) \right)$ and $\gamma(\epsilon,\theta)$ is the following modified Haenkel contour in $\C$, 
\bel{g1} 
\gamma(\epsilon,\theta) := \gamma_-(\epsilon,\theta) \cup \gamma_0(\epsilon,
\theta) \cup \gamma_+(\epsilon,\theta),
\ee
where
\bel{g2}
\gamma_\pm(\epsilon,\theta) 
:= \{s e^{\pm i \theta}:\ s \in [\epsilon, +\infty)  \}\ \mbox{and}\ \gamma_0(\epsilon,\theta) := \{ \epsilon e^{i \beta}:\ \beta \in
[-\theta,\theta] \}
\ee 
are traversed in the positive sense. The Duhamel representation formula \eqref{tt1d}-\eqref{tt1e} is a direct consequence of \cite[Theorem 1.1 and Remark 1]{KSY} and the density of $L^\infty(0,T;L^2(\Omega))\cap \cC((0,T],L^2(\Omega))$ in  $L^1(0,T;L^2(\Omega))$. It is our main tool in the derivation of the time-analytic properties of the weak solution $u$ to the IBVP \eqref{eq1}.

\subsection{Time-analyticity}

 Let us now establish that the solution to \eqref{eq1} associated with suitable source term $f$, depends analytically on the time variable. 

\begin{prop}
\label{pr1} 
Assume that $f \in L^1(0,T;L^2(\Omega))$ is supported in $[0,T-3\epsilon_\star]\times\overline{\Omega}$, for some fixed $\epsilon_\star \in \left( 0,\frac{T}{4} \right)$. Then, there exists $\theta_\star \in \left( 0, \min \left( \frac{\pi}{4} , \frac{\pi}{2\alpha} - \frac{\pi}{4} \right) \right)$, such that the weak solution $u$ to \eqref{eq1}, given by \eqref{tt1d}-\eqref{tt1e}, extends to an $L^2(\Omega)$-valued map still denoted by $u$, which is analytic in $ \cC_{\theta_\star}$, where 
$\cC_{\theta_\star}:= \{T-\epsilon_\star+\tau e^{i\psi}:\ \tau \in(0,+\infty),\ \psi \in (-\theta_\star,\theta_\star) \}$.
Moreover, $t \mapsto u(t,\cdot)$ is holomorphic in $\cC_{\theta_\star}$, its Laplace transform $p \mapsto U(p)=\mathcal L[u] (p) :  x\mapsto \int_0^{+\infty}e^{-pt} u(t,x)dt$ is well defined for all $p \in (0,+\infty)$, and each $U(p)$  is a solution to the following BVP
\bel{ell-BVP}
\left\{ \begin{array}{rcll} 
(\cA_q+\rho p^\alpha) U(p) & = & \int_0^Te^{-pt}f(t,\cdot)dt, & \mbox{in}\ \Omega,\\
B_\star U(p) & = & 0, & \mbox{on}\ \pd\Omega.
\end{array}
\right.
\ee
\end{prop}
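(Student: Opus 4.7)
The approach rests on the Duhamel formula \eqref{tt1d}--\eqref{tt1e}, which reduces the question of time-analytic extension of $u$ to that of the operator family $\{S(t)\}_{t>0}$. My plan is to replace the real parameter $t$ in \eqref{tt1e} by a complex variable $z$ and exploit the interplay between the exponential decay of $e^{zp}$ along the rays $\gamma_\pm(\epsilon,\theta)$ and the resolvent estimate \eqref{res-est}.

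\emph{Step 1: complex-time extension of $S$.} Parametrize $p=re^{\pm i\theta}$ on $\gamma_\pm(\epsilon,\theta)$ and write $z=\tau e^{i\varphi}$; then $|e^{zp}|=e^{\tau r\cos(\varphi\pm\theta)}$. Picking $\theta$ close to $\min(\pi,\pi/\alpha)$ and demanding $|\varphi|<\theta-\pi/2$ ensures $\cos(\varphi\pm\theta)\leq -c<0$ uniformly, so that combined with \eqref{res-est} the contour integral converges absolutely. The numerical threshold in the statement, $\theta_\star<\min(\pi/4,\pi/(2\alpha)-\pi/4)$, is exactly half of $\min(\pi,\pi/\alpha)-\pi/2$ and leaves a safety margin for the following localization step. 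Morera plus dominated convergence then yield holomorphy of $z\mapsto S(z)h$ on the sector $\{z\in\C\setminus\{0\}:|\arg z|<\theta_\star\}$.

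\emph{Step 2: extension of $u$ to $\mathcal{C}_{\theta_\star}$.} Since $\supp f\subset [0,T-3\epsilon_\star]\times\overline{\Omega}$, formula \eqref{tt1d} reduces, for complex $z$ with real part exceeding $T-3\epsilon_\star$, to
\beq
u(z,\cdot)=\int_0^{T-3\epsilon_\star} S(z-s) f(s,\cdot)\,ds.
\eeq
For $z=T-\epsilon_\star+\tau e^{i\psi}\in\mathcal{C}_{\theta_\star}$ and $s\in[0,T-3\epsilon_\star]$, elementary estimates give $\mathrm{Re}(z-s)\geq 2\epsilon_\star+\tau\cos\psi>0$ and $|\arg(z-s)|\leq|\psi|<\theta_\star$, placing $S(z-s)$ in the analyticity domain obtained in Step 1. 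Absolute integrability in $s$ follows from the $L^1$-in-time assumption on $f$ together with a uniform operator-norm bound on $S(z-s)$ on compact subsets of $\mathcal{C}_{\theta_\star}$, once again via \eqref{res-est}; analyticity of $u$ on $\mathcal{C}_{\theta_\star}$ then follows by Fubini and Morera.

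\emph{Step 3: Laplace transform and the boundary value problem.} Split $U(p)$ into $\int_0^{T-\epsilon_\star} e^{-pt} u(t,\cdot)\,dt$ and $\int_{T-\epsilon_\star}^{+\infty} e^{-pt} u(t,\cdot)\,dt$ for $p>0$. The first piece is finite by the $L^1(0,T;L^2(\Omega))$-regularity from \cite[Theorem 2.3]{KY3}. For the second, one bounds $\|S(t-s)\|_{\mathcal{B}(L^2(\Omega))}$ as $t\to+\infty$ by choosing $\epsilon$ of order $1/t$ in \eqref{tt1e} (the value of the contour integral being independent of $\epsilon$) and invoking \eqref{res-est}; this yields a decay of $\|u(t,\cdot)\|_{L^2(\Omega)}$ that is integrable against $e^{-pt}$. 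To derive \eqref{ell-BVP}, apply $\mathcal{L}$ to the first line of \eqref{eq1}: the vanishing initial data $\partial_t^k u(0,\cdot)=0$ for $k=0,\ldots,N_\alpha$ turn $\mathcal{L}[\rho\partial_t^\alpha u](p)$ into $\rho p^\alpha U(p)$ by the Caputo transform rule, $\mathcal{A}_q$ commutes with $\mathcal{L}$ by closedness, the homogeneous condition $B_\star u=0$ transfers to $B_\star U(p)=0$ by linearity, and $\supp f\subset[0,T]$ truncates $\mathcal{L}[f](p)$ to $\int_0^T e^{-pt} f(t,\cdot)\,dt$.

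\emph{Main obstacle.} The crux is Step 1: the coupled choice of $\theta$ and $\theta_\star$ must simultaneously ensure the decay of $e^{zp}$ along the contour and the applicability of the resolvent bound \eqref{res-est}, and must remain uniform for every $\alpha\in(0,2)$, including the transition at $\alpha=1$ where $\min(\pi,\pi/\alpha)$ changes regime. The quantitative decay estimate on $\|S(t)\|$ needed in Step 3, while somewhat technical, is standard once the complex contour has been properly set up.
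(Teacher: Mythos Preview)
Your proposal is correct and follows essentially the same route as the paper: both exploit the contour representation \eqref{tt1e} together with the resolvent bound \eqref{res-est} to force exponential decay of $e^{(z-s)p}$ along $\gamma_\pm(\epsilon,\theta)$, yielding holomorphy of the Duhamel integral on the cone $\mathcal{C}_{\theta_\star}$; the paper merely packages Steps~1--2 as a single estimate $\mathrm{Re}((z-s)p)\le 2\epsilon_\star|p|\cos\theta$ on the double integral rather than first extending $S$ to a sector. The only noteworthy difference is your Step~3: rather than deriving \eqref{ell-BVP} from the Caputo transform rule (which would require additional time-regularity of the weak solution to justify rigorously), the paper simply invokes \cite[Theorem~1.1 and Remark~1]{KSY}, where the elliptic BVP for the Laplace transform is built into the very definition of weak solution used here.
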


We point out that the derivation of Proposition \ref{pr1} is similar to the one of \cite[Proposition 3.1]{KLLY}, where fractional diffusion equations with non-homogeneous boundary condition are considered. But, since the boundary condition is homogeneous here, we can simplify the proof presented in \cite{KLLY} as follows.

\paragraph{\bf Proof of Proposition \ref{pr1}.} 
 Remembering that the weak solution $u$ to \eqref{eq1} is expressed by \eqref{tt1d}-\eqref{tt1e} for some fixed $(\epsilon,\theta) \in (0,1) \times \left( \frac{\pi }{2}, \min \left( \pi , \frac{\pi}{\alpha} \right) \right)$, we pick $\theta_\star\in \left(0,\frac{\theta-\pi/2}{2}\right)$,  
 $\tau \in (0,+\infty)$ and $\psi \in (-\theta_\star,\theta_\star)$, and we notice that $z :=T-\epsilon_\star+\tau e^{i\psi}\in \cC_{\theta_\star}$.
Then, for all $p=re^{\pm i\theta}\in \gamma_\pm(\epsilon,\theta)$ and all $s\in (0,T-3\epsilon_\star)$, it holds true that
$$
\re ((z-s)p) = r \left( \tau \cos(\psi \pm \theta)+(T-\epsilon_\star-s)\cos \theta \right) \leq  2r\epsilon_\star \cos(\theta),
$$
where the symbol $\re$ denotes the real part, which entails
\bel{tt1c}
\re ((z-s)p) \leq  2\epsilon_\star \abs{p} \cos \theta,\ p \in \gamma_\pm(\epsilon,\theta),\ z \in \cC_{\theta_\star},\ s \in (0,T-3\epsilon_\star).
\ee
Hence, in light of \eqref{res-est} and \eqref{g1}-\eqref{g2}, we see for every $z \in \cC_{\theta_\star}$ that the function
\bel{def-v} 
v(z,\cdot):=\int_0^{T-3\epsilon_\star}\int_{\gamma(\epsilon,\theta)}e^{(z-s)p}(A_q+\rho p^\alpha)^{-1} f(s,\cdot) dp\ ds,
\ee
is well-defined in $\Omega$. 
Further, since $f(t,\cdot)=0$ for all $t \in (T-3\epsilon_\star,T)$, we infer from this and \eqref{tt1d}-\eqref{tt1e} that
$$v(t,\cdot)=\int_0^{T-3\epsilon_\star} S(t-s) f(s,\cdot)ds=\int_0^t S(t-s) f(s,\cdot)ds=u(t,\cdot),\ t \in (T-3\epsilon_\star,T).$$
Therefore, putting $v:=u$ on $(0,T-\epsilon_\star) \times \Omega$, we obtain that $u=v_{| Q}$. Moreover, by \cite[Theorem 1.1 and Remark 1]{KSY}, the Laplace transform $V(p)=\int_0^{+\infty} e^{-p t} v(t,\cdot) dt$ of $v$, is solution to the BVP 
\bel{ell-v}
\left\{ \begin{array}{rcll} 
(\cA_q+\rho p^\alpha) V(p) & = & \int_0^Te^{-pt}f(t,\cdot)dt, & \mbox{in}\ \Omega,\\
B_\star V(p) & = & 0, & \mbox{on}\ \pd\Omega.
\end{array}
\right.
\ee

It remains to show that $z \mapsto v(z,\cdot)$ is a holomorphic $L^2(\Omega)$-valued function in $\cC_{\theta_\star}$. To do that, we refer to \eqref{g1} and \eqref{def-v}, and we decompose $v$ into the sum $v_0 + v_+ + v_-$, where
$$ v_j(z,\cdot) := \int_0^{T-3\epsilon_\star}\int_{\gamma_j(\epsilon,\theta)}e^{(z-s)p}(A_q+\rho p^\alpha)^{-1} f(s,\cdot) dp\ ds,\ j=0,+,-.$$
Since $z \mapsto v_0(z,\cdot)$ is obviously holomorphic in $\cC_{\theta_\star}$, we are thus left with the task of proving that this is also the case for $z \mapsto v_{\pm}(z,\cdot)$. This can be done upon noticing that the $L^2(\Omega)$-valued function
$$ z\mapsto e^{(z-s)p}(A_q+\rho p^\alpha)^{-1} f(s,\cdot),\ p \in \gamma_\pm(\epsilon,\theta),\ s \in(0,T-3\epsilon_\star), $$
is holomorphic in $\cC_{\theta_\star}$, that the two following estimates,
$$\norm{\partial_z^ke^{(z-s)p}(A_q+\rho p^\alpha)^{-1} f(s,\cdot)}_{L^2(\Omega)} \leq C(1+ \abs{p})\abs{p}^{-\alpha} e^{2\epsilon_\star \abs{p} \cos\theta}\norm{f(s,\cdot)}_{L^2(\Omega)},\ z \in \cC_{\theta_\star} $$
hold for $k=0,1$ and some constant $C$ that is independent of $p$ and $s$, by virtue of \eqref{res-est} and \eqref{tt1c}, and that the function
$(r,s)\mapsto (1+r) r^{-\alpha} e^{2\epsilon_\star r \cos\theta}\norm{f(s,\cdot)}_{L^2(\Omega)}$ belongs to $L^1((\epsilon,+\infty)\times (0,T-3\epsilon_\star))$.\qed

\begin{rmk}
\label{rmk-res}
Since $F(p) \in L^2(\Omega)$ for all $p \in (0,+\infty)$, then, in accordance with Section \ref{sec-Duhamel}, we may reformulate the claim of Proposition \ref{pr1} that $U(p)$ solves \eqref{ell-BVP}, as
\bel{def-Up} 
U(p) = (A_q+ \rho p^\alpha)^{-1} F(p). 
\ee
Since the multiplication operator by $\rho$ is invertible in $\cB(L^2(\Omega))$, according to \eqref{eq-rho}, then
$A_{q,\rho} + p^{\alpha}$ is boundedly invertible in $L_\rho^2(\Omega)$ for each $p \in (0,+\infty)$, and \eqref{def-Up} may thus be equivalently rewritten as
$$
U(p) = (A_{q,\rho}+ p^\alpha)^{-1} \rho^{-1} F(p). 
$$
\end{rmk}

Armed with Proposition \ref{pr1}, we turn now to proving the main results of this article.


\section{Proofs of the main results}
\label{sec-p1}

 In this section we prove Theorems \ref{t1} and \ref{t2}, and Corollary \ref{c1}, but we omit the derivation of Corollary \ref{c2} from Theorem \ref{t2}, which follows the same path as the one of Corollary \ref{c1} from Theorem \ref{t1}.

\subsection{Proof of Theorem \ref{t1}}
\label{sec-t1pr}

We split the proof into 4 steps. In the first one, we establish a family of resolvent identities for the Laplace transform of the solution to \eqref{eq1}, indexed by the Laplace variable $p \in (0,+\infty)$. The second step is to express these identities in terms of the spectral decomposition of the operator $A_{q,\rho}$, introduced in Section \ref{sec-mainres}. The third step, based on a weak unique continuation principle for second order elliptic equations, provides the desired result, while Step 4 contains the proof of a technical claim, used in Step 3.

\paragraph{\it Step 1: A $p$-indexed family of resolvent identities.} As $\supp \sigma\subset [0,T)$ by assumption, we pick $\epsilon_* \in (0,T \slash 4)$ such that $\supp \sigma \subset [0,T-3\epsilon_*]$. Then, with reference to Proposition \ref{pr1}, we extend the weak solution to \eqref{eq1} into a $L^2(\Omega)$-valued function 
$z \mapsto u(z,\cdot)$, defined in $(0,T-\epsilon_*]\cup \cC_{\theta_\star}$ for some $\theta_\star \in \left( 0 , \min \left( \frac{\pi}{4} , \frac{\pi}{2} - \frac{\pi}{2 \alpha} \right) \right)$, which is holomorphic in $\cC_{\theta_\star}$. Evidently, the $L^2(\Omega')$-valued function $z \mapsto u(z,\cdot)_{|\Omega'}$ is holomorphic in $\cC_{\theta_\star}$ as well. Bearing in mind that $u_{| Q'}=0$, where we have set $Q':=(0,T) \times \Omega'$, and that $(0,T)\cap \cC_{\theta_\star}=[T-\epsilon_*,T)$, we get that
$$ u(z,x)=0,\ (z,x) \in((0,T)\cup \cC_{\theta_\star})\times \Omega',$$
from the unique continuation principle for holomorphic functions. In particular, this entails that
$u(t,x)=0$ for all $(t,x)\in(0+\infty)\times \Omega'$ 
and consequently that the Laplace transform $U(p)$ of $u$ with respect to $t$, vanishes a.e. in $\Omega'$ for every $p \in (0,+\infty)$. Putting this together with the second statement of Proposition \ref{pr1}, we obtain that each $U(p)$, $p \in (0,+\infty)$, is solution to
\bel{t1d}
\left\{ \begin{array}{rcll} 
(\cA_q + \rho p^\alpha) U(p) & = & \widehat{\sigma}(p)g, & \mbox{in}\ \Omega,\\
B_\star U(p) & = & 0, & \mbox{on}\ \partial\Omega,\\
U(p) & = & 0, & \mbox{in}\ \Omega',
\end{array}
\right.
\ee
where we have set $\widehat{\sigma}(p):=\int_0^Te^{-pt}\sigma(t)dt$. Since $f \in L^2(\Omega)$, then,  in accordance with Remark \ref{rmk-res}, \eqref{t1d} may be equivalently reformulated , as
\bel{t1dbis}
\left\{ \begin{array}{rcll} 
U(p) & = &  \widehat{\sigma}(p) (A_{q,\rho}+ p^\alpha)^{-1} \rho^{-1} g  & \mbox{in}\ L_\rho^2(\Omega),\\
U(p) & = & 0 & \mbox{in}\ L_\rho^2(\Omega').
\end{array}
\right.
\ee

\paragraph{\it Step 2: Spectral representation.} Since the injection $V \hookrightarrow L^2(\Omega)$ is compact, the resolvent of the operator $A_{q,\rho}$, defined in Section \ref{sec-mainres}, is compact in $L_\rho^2(\Omega)$. Let $\{ \lambda_n: n \in \N \}$ be the increasing sequence of the eigenvalues of $A_{q,\rho}$. For each $n \in \N$, we denote by $m_n \in \N$ the algebraic multiplicity of the eigenvalue $\lambda_n$ and we introduce a family $\{ \varphi_{n,k}:\ k=1,\ldots, m_n \}$ of eigenfunctions of $A_{q,\rho}$, which satisfy 
$$ A_{q,\rho} \varphi_{n,k} = \lambda_n \varphi_{n,k}, $$
and form an orthonormal basis in $L_\rho^2(\Omega)$ of the eigenspace of $A_{q,\rho}$ associated with $\lambda_{n}$ (i.e. the kernel of $A_{q,\rho}-\lambda_n I$, where the notation $I$ stands for the identity operator of $L_\rho^2(\Omega)$). 
The first line in \eqref{t1dbis} then yields for all $p \in (0,+\infty)$, that the following equality
$$U(p)=\widehat{\sigma}(p)\left( \sum_{n=1}^{+\infty}\frac{\sum_{k=1}^{m_n}  g_{n,k}\varphi_{n,k}}{\lambda_n+p^\alpha}\right), $$
holds in $L_\rho^2(\Omega)$ with $g_{n,k}:=\langle \rho^{-1}g,\varphi_{n,k}\rangle_{L_\rho^2(\Omega)}$. 
From this, the second line of \eqref{t1dbis} and the continuity of the projection from $L_\rho^2(\Omega)$ into $L_\rho^2(\Omega')$, it then follows that
\bel{t1e} 
\widehat{\sigma}(p)\left( \sum_{n=1}^{+\infty}\frac{\sum_{k=1}^{m_n}  g_{n,k} \varphi_{n,k}(x)}{\lambda_n+p^\alpha} \right)=0,\ x \in \Omega',\ p \in (0,+\infty).
\ee

\paragraph{\it Step 3: End of the proof.} Since $p \mapsto \widehat{\sigma}(p)$ is holomorphic in $\C_+:= \{ z \in \C:\ \re{z} >0 \}$, then either of the two following conditions is true:
\begin{enumerate}[(a)]
\item For all $p \in \C_+$ we have $\widehat{\sigma}(p)=0$;
\item There exists an open interval $I \subset (0,+\infty)$, such that $\widehat{\sigma}(p) \neq 0$ for each $p \in I$.
\end{enumerate}

The first case is easily treated as we get that $\sigma=0$ a.e. in $(0,T)$ from (a) and the injectivity of the Laplace transform, which entails the desired result. In the second case, we combine (b) with \eqref{t1e} and obtain that
\bel{t1f}
\sum_{n=1}^{+\infty}\frac{\sum_{k=1}^{m_n}  g_{n,k} \varphi_{n,k}(x)}{\lambda_n+p^\alpha}=0,\ x \in \Omega',\ p\in I.
\ee
Let us introduce the following $L_\rho^2(\Omega')$-valued function,
\bel{def-R}
R(z):= \sum_{n=1}^{+\infty}\frac{\sum_{k=1}^{m_n}  g_{n,k} \varphi_{n,k}}{\lambda_n+z},\ z \in \C \setminus \{ -\lambda_n:\ n\in \N\},
\ee
meromorphic in $\C \setminus \{ -\lambda_n:\ n\in \N\}$ with simple poles $\{ -\lambda_n:\ n\in \N\}$. Evidently, \eqref{t1f} can be equivalently rewritten as
$$ R(p^\alpha)=\sum_{n=1}^{+\infty}\frac{\sum_{k=1}^{m_n}  g_{n,k} \varphi_{n,k}}{\lambda_n+p^\alpha}=0,\ p\in I, $$
the above identity being understood in $L_\rho^2(\Omega')$. Therefore, we necessarily have 
$R(z)=0$ for all $z\in \mathbb C\setminus\{-\lambda_n:\ n\in \N\}$, and consequently it holds true for all $n \in \N$ that
\bel{t1g}
\sum_{k=1}^{m_n} g_{n,k} \varphi_{n,k}(x)=0,\ x \in \Omega'.
\ee
Assume for a while that for each $n \in \N$, the eigenfunctions $\varphi_{n,k}$, $k=1,\ldots, m_n$, are linearly independent in $L_\rho^2(\Omega')$, the proof of this claim being postponed to Step 4, below.
Then, we infer from \eqref{t1g} that $g_{n,k}=0$ for all $n\in \N$ and all $k=1,\ldots,m_n$. Therefore, we find that
$$ g= \sum_{n=1}^{+\infty} \sum_{k=1}^{m_n} g_{n,k} \varphi_{n,k} = 0 $$
in $L_\rho^2(\Omega)$, which proves the desired result. 

\paragraph{\it Step 4: The $\varphi_{n,k}$, $k=1,\ldots, m_n$, are linearly independent in $L_\rho^2(\Omega')$.} For $n \in \N$ fixed, we consider $m_n$ complex numbers $\alpha_k$, for $k=1,\ldots,m_n$, such that 
\bel{t1h}
\sum_{k=1}^{m_n}\alpha_k \varphi_{n,k}(x) =0,\ x \in \Omega',
\ee
and we put $\varphi:= \sum_{k=1}^{m_n} \alpha_k \varphi_{n,k}$. Since each $\varphi_{n,k}$ lies in $D(A_{q,\rho})$, the domain of the operator $A_{q,\rho}$, then the same is true for $\varphi$, i.e. 
\bel{indom} 
\varphi \in D(A_{q,\rho}) = D(A_q), 
\ee
according to \eqref{dom-A}, and we have $A_{q,\rho} \varphi = \lambda_n \varphi$ in $L_\rho^2(\Omega)$. This and \eqref{t1h} translate into the fact that
$$\left\{ \begin{array}{rcll} 
( \cA_q - \lambda_n \rho) \varphi & = & 0, & \mbox{in}\ \Omega,\\
\phi & = & 0, & \mbox{in}\ \Omega'. 
\end{array}
\right.$$
Moreover, as we have $\varphi \in H^2(\Omega)$ from \eqref{dom-A}-\eqref{dom-Ar} and \eqref{indom}, the weak unique continuation principle for second order elliptic partial differential equations
(see, e.g., \cite[Theorem 1]{SS}) then yields that $\varphi=0$ a.e. in $\Omega$, i.e. 
$$
\varphi(x) = \sum_{k=1}^{m_n} \alpha_k \varphi_{n,k}(x)=0,\ x \in \Omega.
$$
Bearing in mind that $\{ \varphi_{n,k}:\ k=1,\ldots, m_n \}$ is orthonormal in $L_\rho^2(\Omega)$, we deduce from the above line that $\alpha_k=0$ for all $k=1,\ldots,m_n$, which establishes that the $\varphi_{n,k}$, $k=1,\ldots, m_n$, are linearly independent in $L_\rho^2(\Omega')$.

Having completed the proof of Theorem \ref{t1}, we turn now to  showing Corollary \ref{c1}.

\subsection{Proof of Corollary \ref{c1}}
In light of \eqref{eq1}, $u:=u_1-u_2$ is a weak solution is solution to
\bel{eq}
\left\{ \begin{array}{rcll} 
(\rho(x) \partial_t^{\alpha} +\cA_q) u(t,x) & = & f(t,x), & (t,x) \in  Q,\\
B_\star u(t,x) & = & 0, & (t,x) \in \Sigma, \\  
\pd_t^k u(0,\cdot) & = & 0, & \mbox{in}\ \Omega,\ k=0,\ldots,N_\alpha,
\end{array}
\right.
\ee
with $f(t,x)=\sigma_1(t) g_1(x) - \sigma_2(t) g_2(x)$ for a.e. $(t,x) \in Q$. 

In the first (resp., second) case (i) (resp., (ii)), we have $f(t,x)=\sigma_1(t) (g_1-g_2)(x)$ where $\sigma_1 \in L^1(0,T)$ is supported in $[0,T)$ and $g_1-g_2 \in L^2(\Omega)$ (resp., $f(t,x)=(\sigma_1-\sigma_2)(t) g_1(x)$ where $\sigma_1-\sigma_2 \in L^1(0,T)$ is supported in $[0,T)$ and $g_1\in L^2(\Omega)$). Since $u=0$ in $Q'$, then, under Condition (i), an application of Theorem \ref{t1} yields $\sigma_1(t) (g_1-g_2)(x)=0$ for a.e. $(t,x) \in Q$ and hence $g_1=g_2$ in $\Omega$. Similarly, under Condition (ii), we obtain that $(\sigma_1-\sigma_2)(t) g_1(x)=0$ for a.e. $(t,x) \in Q$ and consequently that $\sigma_1=\sigma_2$ in $(0,T)$. The proof of Corollary \ref{c2} is thus complete.


\subsection{Proof of Theorem \ref{t2}} 

We split the proof into three steps. The first one is to prove existence of a $\cC([0,T]; H^{2\zeta}(\Omega))$-solution to the IBVP \eqref{eq1} with $\rho=1$ a.e. in $\Omega$ and $\mathcal{A}_q=-\Delta$. Then show the transformation of our inverse problem into inverse problems for a family of elliptic equations. Using this transformation we complete the proof of the theorem.

\paragraph{\it Step 1: Improved space-regularity result}
We start by establishing that the weak-solution to \eqref{eq1} associated with $\rho=1$, $q=0$ and source term $f\in L^r(0,T;L^2(\Omega))$, lies in $\cC([0,T];H^{2\zeta}(\Omega))$. 

As a preamble, we set $A:=A_0$, where we recall that $A_0$ is the self-adjoint realization of the (opposite of) the Laplace operator in $L^2(\Omega)$, endowed with either Dirichlet or Neumann boundary condition. Otherwise stated, $A$ is the self-adjoint operator in $L^2(\Omega)$, acting as $-\Delta$ on its domain $D(A)=H^2(\Omega) \cap H_0^1(\Omega)$ when the boundary operator $B_\star$ appearing in \eqref{eq1} reads $B_\star u = u$, while it is $D(A)=H^2(\Omega)$ when $B_\star u = \pd_{\nu_a} u$. We denote by $(\lambda_n)_{n \in \N}$ the sequence of eigenvalues of $A$, arranged in non-decreasing order and repeated with the multiplicity, and we introduce  an orthonormal basis $(\phi_n)_{n\ \in \N}$ in $L^2(\Omega)$ of eigenfunctions of $A$, obeying $A \varphi_n=\lambda_n \varphi_n$ for all $n \in \N$. 

Since the operator $A$ is nonnegative, we recall from the functional calculus, that 
$$
(A+1)^s h=\sum_{n=1}^{+\infty} (1+\lambda_n)^s \langle h,\phi_n\rangle_{L^2(\Omega)} \phi_n,\  h \in D((1+A)^s),\ s \in [0,+\infty), $$
where $D((1+A)^s)=\left\{ h \in L^2(\Omega):\ \sum_{n=1}^{+\infty}\abs{\langle h,\phi_n\rangle_{L^2(\Omega)}}^2 (1+\lambda_n)^{2s}<\infty \right\}$. For further reference, we set
$$ 
\norm{h}_{D((A+1)^s)} :=\left(\sum_{n=1}^{+\infty} (1+\lambda_n)^{2s} \abs{\langle h , \phi_n \rangle_{L^2(\Omega)}}^2 \right)^{\frac{1}{2}},\ h \in D((A+1)^s).
$$
As $f \in L^r(0,T;L^2(\Omega))$ with $r > \frac{1}{\alpha}$, then 
the weak solution $u$ to \eqref{eq1} reads
\bel{eae1}
u(t,\cdot) = \sum_{n=1}^\infty u_n(t) \varphi_n,\ t\in(0,T),
\ee
where $u_n(t):=\int_0^t(t-s)^{\alpha-1} E_{\alpha,\alpha}(-\lambda_n (t-s)^\alpha) \langle f(s,\cdot),\phi_n\rangle_{L^2(\Omega)} ds$ and $E_{\alpha,\beta}$ is the Mittag-Leffler function:
$$
E_{\alpha,\beta}(z)=\sum_{k=0}^\infty \frac{z^k}{\Gamma(\alpha k+\beta)},\ z\in \C,\ \alpha \in (0,+\infty), \beta \in \R.
$$
We refer the reader to \cite[Theorem 2.4]{SY}, \cite[Theorem 1.1]{KY} or \cite[Lemma 3.3]{FK} for the derivation of the representation formula \eqref{eae1} of $u$. Next, we recall from \cite[Theorem 1.6]{P} that
$$\abs{E_{\alpha,\alpha}(-\lambda_n t^\alpha)}\leq C \frac{t^{-\alpha\zeta}+1}{(1+\lambda_n)^\zeta},\ t \in (0,T),\ n \in \N,\  \zeta\in[0,1),$$
for some positive constant $C$ which is independent of $n$ and $t$.
Thus, for all $n \in \N$ we have
$$ \abs{ t^{\alpha-1} (1+\lambda_n)^{\zeta} E_{\alpha,\alpha}(-\lambda_n t^{\alpha})} \leq C t^{\alpha(1-\zeta)-1},\ t \in (0,T),\  \zeta\in[0,1), $$
and consequently $t \mapsto t^{\alpha-1} (1+\lambda_n)^{\zeta} E_{\alpha,\alpha}(-\lambda_n t^{\alpha}) \in L^{r'}(0,T),\  \zeta\in[0,1)$, 
where $r'$ is the real number conjugated to $r$, i.e. $r'$ is such that
$\frac{1}{r'}=1-\frac{1}{r} >1-\alpha(1-\zeta)$. Therefore, for all $\zeta\in[0,1)$, using that $s \mapsto \langle f(s,\cdot),\phi_n\rangle_{L^2(\Omega)} \in L^r(0,T)$, we obtain that $t \mapsto (1+\lambda_n)^\zeta u_n(t) \in \cC([0,T])$, and the following estimate
\bea
\norm{\sum_{k=n}^m u_k(t) \phi_k}_{D((1+A)^\zeta)} &\leq & \int_0^t (t-s)^{\alpha-1} \left(\sum_{k=n}^m (1+\lambda_n)^{2 \zeta} E_{\alpha,\alpha}(-\lambda_n (t-s)^\alpha)^2 \abs{\langle f(s,\cdot),\phi_n\rangle_{L^2(\Omega)}} ^2 \right)^{\frac{1}{2}} ds \nonumber \\
&\leq  & C\int_0^t(t-s)^{\alpha(1-\zeta)-1}\ \left( \sum_{k=n}^m \abs{\langle f(s,\cdot),\phi_n \rangle_{L^2(\Omega)}}^2 \right)^{\frac{1}{2}}  ds \nonumber \\
&\leq & C  \left( \int_0^T s^{(\alpha(1-\zeta)-1)r'} ds \right)^{\frac{1}{r'}} \left( \int_0^T  \left( \sum_{k=n}^m \abs{\langle f(s,\cdot),\phi_n \rangle_{L^2(\Omega)}}^2 \right)^{\frac{r}{2}} ds \right)^{\frac{1}{r}}, \label{eae1b}
\eea
which is true for all $t \in [0,T]$ and for all natural numbers $m$ and $n$ with $n\leq m$.

On the other hand, since $\lim_{n,m \to+\infty} \left( \int_0^T \left( \sum_{k=n}^m \abs{\langle f(s,\cdot),\phi_n \rangle_{L^2(\Omega)}}^2 \right)^{\frac{r}{2}} ds \right)^{\frac{1}{r}}=0$ as we have $f\in L^r(0,T;L^2(\Omega))$ by assumption, we derive from \eqref{eae1b}
that $\left( \sum_{k=1}^n u_k \phi_k \right)_{n \in \N}$ is a Cauchy sequence in $\cC([0,T];D((A+1)^\zeta))$,  $\zeta\in[0,1)$. 
Therefore, for all $\zeta\in[0,1)$, we have $u \in  \cC([0,T],D((A+1)^\zeta)))$ by \eqref{eae1} and consequently $u\in \cC([0,T],H^{2\zeta}(\Omega))$ from the embedding $D((A+1)^\zeta))  \subset H^{2\zeta}(\Omega)$.

Having established the first claim of Theorem \ref{t2}, we turn now to proving \eqref{t2a}.

Put $\Omega_0:=\omega\times (-L,L)$ and pick an open subset $\Omega_\star \subset \R^d$ with $\cC^2$ boundary, fulfilling all the following conditions simultaneously:
\bel{cond} 
\mbox{(a)}\ \Omega\subset \Omega_\star,\ \mbox{(b)}\ \pd \Omega \setminus \pd \Omega_\star \subset \gamma,\ \mbox{(c)}\ \Omega':=\Omega_\star \setminus \overline{\Omega}\ \mbox{is not empty},\ \mbox{(d)}\ \Omega_\star \setminus \overline{\Omega_0}\ \mbox{is connected}. 
\ee
Notice that such a subset $\Omega_\star$ exists in $\R^d$ as $\Omega \setminus \overline{\Omega_0}$ is connected and $\pd \Omega$ is $\cC^2$. We split the proof into two steps.
\paragraph{\it Step 2: Elliptic BVPs indexed by $p$}
Setting $f(t,x):=0$ and $u(t,x):=0$ for a.e. $(t,x) \in Q'$, we infer from \eqref{cond}(b) and the assumption
$u_{|(0,T)\times\gamma}=\partial_{\nu}u_{|(0,T)\times\gamma}=0$, that
\bel{eq3}
\left\{ \begin{array}{rcll} 
(\partial_t^{\alpha} -\Delta) u(t,x) & = & f(t,x), & (t,x)\in  (0,T) \times \Omega_\star,\\
u& = & 0, & (t,x) \in (0,T) \times \pd \Omega_\star, \\  
\pd_t^k u(0,\cdot) & = & 0, & \mbox{in}\ \Omega_\star,\ k=0,\ldots,N_\alpha.
\end{array}
\right.
\ee
We have $r>2$ as $\alpha (1-\zeta)<\frac{1}{2}$, whence $f \in L^1(0,T;L^2(\Omega_\star))$. Moreover, $f$ being supported in $[0,T) \times \overline{\Omega_\star}$, hence in $[0,T-3 \epsilon_\star] \times \overline{\Omega_\star}$ for some fixed $\epsilon_\star \in \left(0, \frac{T}{3} \right)$, we extend $t \mapsto u(t,\cdot)$ to a $L^2(\Omega_\star)$-valued function in $(0,+\infty)$ which is analytic in $(T-\epsilon_\star,+\infty)$, by invoking Proposition \ref{pr1} where $\Omega$ is replaced by $\Omega_\star$. 
Bearing in mind that $u$ vanishes in $Q'$, by assumption, we find that
\bel{ae1} 
u(t,x)=0,\ (t,x) \in (0,+\infty) \times\Omega'.
\ee
Moreover, in light of Proposition \ref{pr1}, we get for all $p \in (0,+\infty)$ that the Laplace transform $U(p)=\int_0^{+\infty} e^{-p t} u(t) dt$ of $u$, is solution to the following BVP
\bel{ae2}
\left\{ \begin{array}{rcll} 
(-\Delta + p^\alpha ) U(p) & = & \int_0^Te^{-pt}f(t,\cdot)dt, & \mbox{in}\ \Omega_\star,\\
 U(p) & = & 0, & \mbox{on}\ \pd \Omega_\star,
\end{array}
\right.
\ee
where $\nu_\star$ is the outward unit normal vector to $\pd \Omega_\star$.
Since $F(p) \in L^2(\Omega_\star)$ for each $p \in (0,+\infty)$ and since $\pd \Omega_\star$ is $\cC^2$, then $U(p) \in H^2(\Omega_\star)$ by elliptic regularity.

Next, as $f$ is supported in $[0,T] \times \overline{\Omega_0}$, we have
$F(p)=0$ in $\Omega_\star \setminus \overline{\Omega_0}$ for all $p \in (0,+\infty)$, and consequently
\bel{t2c}
\left\{ \begin{array}{rcll} 
(-\Delta+p^\alpha ) U(p) & = & 0, & \mbox{in}\ \Omega_\star \setminus \overline{\Omega_0},\\
U(p) & = & 0, & \mbox{on}\ \Omega',
\end{array}
\right.
\ee
by \eqref{ae1}-\eqref{ae2}. Since $\Omega_\star \setminus \overline{\Omega_0}$ is connected and $\Omega' \subset  \Omega_\star \setminus \overline{\Omega_0}$, and since $U(p) \in H^2(\Omega_\star \setminus \overline{\Omega_0})$, then the weak unique continuation principle for elliptic equations to \eqref{t2c} yields that
$U(p)=0$ in $\Omega_\star \setminus \overline{\Omega_0}$. Thus, taking into account that $\overline{\Omega_0} \subset \Omega_\star$ and that $U(p) \in H^2(\Omega_\star)$, we have
$$ U(p)=\pd_{\nu_0}U(p)=0\ \mbox{in}\ \pd\Omega_0,$$
where $\nu_0$ denotes the outward unit normal vector to $\Omega_0$. From this and the first line of \eqref{ae2}, it then follows that 
\bel{t2d}
\left\{ \begin{array}{rcll} 
(-\Delta+p^\alpha ) U(p,x) & = &G(p,x') h(x_d), & x=(x',x_d) \in \Omega_0,\\
U(p)=\partial_{\nu_0}U(p) & = & 0, & \mbox{on}\ \pd\Omega_0,
\end{array}\right.
\ee
where $G(p,\cdot):=\int_0^{+\infty} e^{-pt} g(t,\cdot) dt$.

\paragraph{\it Step 3: Fourier transform.}
For all $(k,\theta) \in\R \times \mathbb S^{d-2}$, where $ \mathbb S^{d-2}$ is the unit sphere of $\R^{d-1}$, we notice that
$$(-\Delta+p^\alpha)e^{-ik\theta\cdot x'}e^{\omega(p,k)x_d}=(k^2-\omega(p,k)^2+p^\alpha)e^{-ik\theta\cdot x'}e^{\omega(p,k)x_d}=0,\ p \in (0,+\infty),$$
where $\omega(p,k):= \left( p^\alpha + k^2  \right)^{\frac{1}{2}}$.
This and \eqref{t2d} yield
$$\int_{\Omega_0} G(p,x')h(x_d) e^{-ik\theta\cdot x'} e^{\omega(p,k)x_d}dx'dx_d=\int_{\Omega_0} (-\Delta+p^\alpha ) U(p,x) e^{-ik\theta\cdot x'}e^{\omega(p,k)x_d} dx' dx_d=0, $$
upon integrating by parts, and hence we get that
$$\left(\int_{\omega} G(p,x')e^{-ik\theta\cdot x'}dx'\right)\left(\int_{-L}^L h(x_d)e^{\omega(p,k)x_d}dx_d \right)=0$$
from Fubini's theorem.
Putting $G(p,\cdot)=0$ in $\R^{d-1}\setminus \omega$ and $h=0$ in $\R\setminus (-L,L)$, we thus find that
\bel{t2e}
\left(\int_{\R^{n-1}}G(p,x')e^{-ik\theta\cdot x'}dx'\right) \left(\int_\R h(x_d)e^{\omega(p,k)x_d}dx_d\right)=0,\ \theta \in \mathbb S^{d-2},\ k \in \R.
\ee
Next, $h \in L^1(\R)$ being compactly supported and not identically zero in $\R$, its Fourier transform
$z \mapsto \int_\R h(x_d)e^{z x_d}dx_d$
is holomorphic and not identically zero in $\C$. Therefore, there exists a non empty interval $(a,b) \subset (0,+\infty)$, with $a<b$, such that we have
$$ \int_\R h(x_d) e^{\omega(p,k) x_d} dx_d \neq 0,\ k\in(a,b).$$
This and \eqref{t2e} yield $\int_{\R^{d-1}} G(p,x') e^{-ik \theta \cdot x'}dx'=0$ for all $\theta \in \mathbb S^{d-2}$ and $k \in (a,b)$. Otherwise stated, the partial Fourier transform of $x' \mapsto G(p,x')$ vanishes in the concentric ring $C_{a,b}:=\{y \in \R^{d-1}:\ a< |y| <b\}$, where $\abs{y}$ denotes the Euclidian norm of $y \in \R^{d-1}$, i.e. 
\bel{ae3}  \int_{\R^{d-1}} G(p,x')e^{-i \xi \cdot x'}dx'=0,\ \xi \in C_{a,b}. 
\ee
Next, since $x' \mapsto G(p,x')$ is supported in the compact subset $\overline{\omega}$, then the function 
$\xi \mapsto \int_{\R^{d-1}} G(p,x')e^{-i \xi \cdot x'}dx'$ is real-analytic in $\R^{d-1}$, so we infer from \eqref{ae3} that
$\int_{\R^{d-1}} G(p,x')e^{-i\xi\cdot x'}dx'=0$ for all $\xi \in \R^{d-1}$. 
Therefore, we have $G(p,\cdot)=0$ in $\R^{d-1}$, by the injectivity of the partial Fourier transform with respect to $x'$, and since this equality holds for all $p \in (0,+\infty)$, we obtain that $g=0$ in $(0,T) \times \omega$,
from the injectivity of the Laplace transform with respect to $t$. This completes the proof of Theorem \ref{t2}.


\section{Other results}
\label{sec-or}

 This section contains an alternative approach to the analysis of the inverse problems (IP1) and (IP2), presented in Section \ref{sec-p1}. Here, unlike Corollaries \ref{c1} and \ref{c2}, we no longer assume that the time-varying part of the unknown source term is compactly supported in $(0,T]$.

 The result that we have in mind for (IP1) can be stated as follows.

\begin{thm}
\label{t1bis} 
For $j=1,2$, let $g_j \in L^2(\Omega)$ and let $\sigma_j \in L^1(0,T)$ fulfill
$$\sigma_1(t)=\sigma_2(t)=\sigma(t),\ t \in (0,t_0), $$
where $t_0 \in (0,T)$ and $\sigma$ is a non-zero holomorphic function in the complex half-strip $\cS_\delta:=\{x+iy:\ x\in(-\delta,+\infty),\ y\in(-\delta,\delta)\}$ of fixed width $\delta \in (0,+\infty)$.  Assume moreover that $\sigma$ grows no faster than polynomials, i.e. that
\bel{hyp-sig}
\abs{\sigma(t)} \leq C (1+ t)^N,\ t\in (0,+\infty),
\ee
for some positive constant $C$ and some natural number $N$, which are both independent of $t$,  and that $g_1$ is not identically zero in $\Omega$. Then,  for all non-empty open subset $\Omega'$ of $\Omega$, we have:
\bel{impl-c1}
u_1=u_2\ \mbox{in}\ (0,T) \times \Omega'  \Longrightarrow  \sigma_1= \sigma_2\ \mbox{in}\ (0,T)\ \mbox{and}\  g_1= g_2\ \mbox{in}\  \Omega.
\ee
\end{thm}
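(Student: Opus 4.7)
The plan is to adapt the Laplace-transform plus spectral / weak-unique-continuation strategy of Theorem \ref{t1} to the present non-compactly-supported setting. Setting $u := u_1 - u_2$, the function $u$ solves \eqref{eq1} with source $\sigma_1 g_1 - \sigma_2 g_2$ and vanishes on $(0,T) \times \Omega'$. We establish $g_1 = g_2$ first, and deduce $\sigma_1 = \sigma_2$ from it.

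\emph{First stage: proving $g_1 = g_2$.} Exploiting the holomorphic extension $\sigma$ on $\cS_\delta$ and its polynomial growth \eqref{hyp-sig}, we introduce $\tilde v$ on $(0,+\infty) \times \Omega$ as the weak solution to \eqref{eq1} associated with the auxiliary source $\sigma(t)(g_1 - g_2)(x)$; well-posedness on any finite time interval follows from the framework of Section \ref{sec-analytic} since $\sigma \in L^1_{\textrm{loc}}([0,+\infty))$. As $\sigma_1 = \sigma_2 = \sigma$ on $(0, t_0)$, uniqueness of the weak solution forces $\tilde v = u$ on $(0,t_0) \times \Omega$, and therefore $\tilde v = 0$ on $(0,t_0) \times \Omega'$. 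The crucial step is to upgrade this vanishing from $(0,t_0)$ to the whole $(0,+\infty)$. To this end we show that $\tilde v$ extends as an $L^2(\Omega)$-valued analytic function of $t$ on a complex neighborhood of $(0,+\infty)$, by adapting the proof of Proposition \ref{pr1}: starting from the Duhamel representation
\[
\tilde v(t,\cdot) = \int_0^t \int_{\gamma(\epsilon,\theta)} e^{(t-s)p}(A_q + \rho p^\alpha)^{-1}(g_1 - g_2)\, dp\, \sigma(s)\, ds,
\]
the holomorphy of $\sigma$ in $\cS_\delta$ permits a deformation of the $s$-contour which, combined with the resolvent bound \eqref{res-est}, yields the claimed analytic continuation, and the polynomial bound \eqref{hyp-sig} controls the integrand as $s \to +\infty$. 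Analytic continuation from $\tilde v_{|\Omega'} = 0$ on $(0,t_0)$ then forces $\tilde v_{|\Omega'} = 0$ on $(0,+\infty)$. The polynomial growth of $\sigma$ is inherited by $\tilde v$, so that the Laplace transform $V(p) := \int_0^{+\infty} e^{-pt} \tilde v(t,\cdot)\, dt$ is well defined for $\re{p}$ large enough and satisfies
\[
V(p) = \hat\sigma(p)(A_{q,\rho}+p^\alpha)^{-1}\rho^{-1}(g_1 - g_2) \ \text{in}\ L_\rho^2(\Omega), \qquad V(p) = 0 \ \text{in}\ L_\rho^2(\Omega').
\]
Since $\sigma \not\equiv 0$ and the Laplace transform is injective, $\hat\sigma$ is not identically zero on $\C_+$; thus the dichotomy of Step 3 of Section \ref{sec-t1pr} falls in case (b): $\hat\sigma \neq 0$ on some open interval $I \subset (0,+\infty)$. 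The spectral expansion and the weak-UCP argument of Steps 2--4 of Section \ref{sec-t1pr} then give $g_1 - g_2 = 0$ in $\Omega$.

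\emph{Second stage: proving $\sigma_1 = \sigma_2$.} Setting $g := g_1 = g_2 \not\equiv 0$, uniqueness of the weak solution on $(0, t_0)$ yields $u_1 = u_2$ on $(0,t_0) \times \Omega$, so $u$ vanishes on $(0,t_0) \times \Omega$ and solves \eqref{eq1} on $(0,T)$ with separable source $w(t)g(x)$ where $w := \sigma_1 - \sigma_2$ vanishes on $(0,t_0)$. For $(t,x) \in (0,T) \times \Omega'$, the identities $\partial_t^\alpha u(t,x) = 0$ and $\cA_q u(t,x) = 0$ — both inherited from $u_{|\Omega'} = 0$ — reduce the equation to $w(t) g(x) = 0$ for a.e.\ $(t,x)$. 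When $g$ is not identically zero on $\Omega'$, this directly forces $w = 0$, i.e.\ $\sigma_1 = \sigma_2$. In the opposite case $g \equiv 0$ on $\Omega'$, one uses the truncated Laplace transform of $u$ and the resulting elliptic equation on $\Omega$ (whose right-hand side involves $\hat w_T(p) g$ together with boundary terms at $T$ controlled by $u(T)$, which is itself continuous and vanishes on $\Omega'$), combined with $U(p)_{|\Omega'} = 0$, to apply a spatial-UCP / spectral argument analogous to Steps 2--4 of Section \ref{sec-t1pr}: since $g \not\equiv 0$ in $\Omega$, the dichotomy now forces $\hat w_T \equiv 0$ and hence $w = 0$ a.e.\ on $(0,T)$.

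The main obstacle is the time-analyticity claim in the first stage: Proposition \ref{pr1} is stated only for compactly supported sources, while $\sigma(t)(g_1 - g_2)(x)$ here only grows polynomially. Establishing the required analyticity in a complex neighborhood of $(0,+\infty)$ demands a careful deformation of the Bromwich-type contour $\gamma(\epsilon,\theta)$ combined with the resolvent bound \eqref{res-est} and the polynomial growth \eqref{hyp-sig}; the latter is also essential to ensure convergence of the Laplace transform $V(p)$ and to control the long-time behaviour of $\tilde v$.
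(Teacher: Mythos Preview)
Your first stage---introducing the auxiliary solution $\tilde v$ on $(0,+\infty)$ with source $\sigma(t)(g_1-g_2)$, establishing its time-analyticity so as to propagate the vanishing on $\Omega'$ from $(0,t_0)$ to all of $(0,+\infty)$, and then running the Laplace-transform/spectral/UCP machinery of Steps~2--4 of Section~\ref{sec-t1pr}---is precisely the paper's argument. The only difference is cosmetic: the paper obtains the analyticity of $t\mapsto \tilde v(t,\cdot)$ on $(0,+\infty)$ by invoking \cite[Theorem~1.4]{LKS}, rather than by reworking Proposition~\ref{pr1} through a contour deformation in the $s$-variable as you propose.

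Your second stage diverges from the paper and contains a gap. The paper does not split into cases nor argue directly from the equation: once $g_1=g_2$ (nonzero) is established, it simply observes that hypothesis (ii) of Corollary~\ref{c1} is now satisfied and invokes that corollary to conclude $\sigma_1=\sigma_2$. Your Case~1 shortcut---reading $w(t)g(x)=0$ off the equation on $(0,T)\times\Omega'$---is fine when $g\not\equiv 0$ on $\Omega'$, but your Case~2 proposal does not go through. The truncated transform $U_T(p)=\int_0^T e^{-pt}u(t,\cdot)\,dt$ satisfies an elliptic equation whose right-hand side is $\hat w_T(p)\,g$ \emph{plus} a term $B(p)$ collecting the $t=T$ contributions (values and fractional-integral traces of $u$ at $T$). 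You say $B(p)$ is ``controlled by $u(T)$, which vanishes on $\Omega'$'', but that only yields $B(p)|_{\Omega'}=0$, not $B(p)=0$ on $\Omega$; in the spectral expansion of $U_T(p)$ one therefore picks up unknown coefficients $\langle \rho^{-1}B(p),\varphi_{n,k}\rangle_{L_\rho^2(\Omega)}$ with uncontrolled $p$-dependence, and the scalar factorisation that drives the dichotomy of Step~3 in Section~\ref{sec-t1pr} collapses. The paper sidesteps this entirely by delegating the second stage to Corollary~\ref{c1}.
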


\paragraph{\bf Proof of Theorem \ref{t1bis}.}
With reference to \eqref{eq}, we consider the following IBVP
\bel{eq2}
\left\{ \begin{array}{rcll} 
(\rho(x) \partial_t^{\alpha} +\cA_q) w(t,x) & = & \sigma(t)(g_1(x)-g_2(x)), & (t,x) \in (0,+\infty) \times \Omega,\\
B_\star w(t,x) & = & 0, & (t,x) \in (0,+\infty) \times \pd \Omega, \\  
\pd_t^k w(0,x) & = & 0, & x \in \Omega,\ k=0,\ldots, N_\alpha,
\end{array}
\right.
\ee
for $\alpha \in (0,2)$. 
With reference to Section \ref{sec-Duhamel}, \eqref{eq2} admits a unique solution $w \in \cC([0,+\infty),L^2(\Omega))$, which is expressed by \eqref{tt1d}-\eqref{tt1e}. Moreover, due to \eqref{hyp-sig}, we get upon arguing as in the derivation of
\cite[Theorem 1.4]{LKS} that the $L^2(\Omega)$-valued function $t \mapsto w(t,\cdot)$ is analytic in $(0,+\infty)$.

On the other hand, from the uniqueness of the solution to \eqref{eq} with $T=t_0$ and 
$f(t,x)=\sigma(t)(g_1(x)-g_2(x))$, we get that 
$w(t,x)=u(t,x)$ for a.e. $(t,x) \in (0,t_0)\times \Omega$.
Since $u=0$ in $Q'$, by assumption, the analyticity of $t \mapsto w(t,\cdot)$ in $(0,+\infty)$ then yields
\bel{mes-c2}
w(t,x)=0,\ (t,x) \in (0,+\infty) \times \Omega'.
\ee
Thus, taking the Laplace transform with respect to $t\in(0,+\infty)$ in \eqref{eq2} and in \eqref{mes-c2}, we obtain in the same way as in the derivation of \eqref{t1dbis} in Section 3.1, that for every $p \in (0,+\infty)$, 
$$
\left\{ \begin{array}{rcll} 
W(p) & = &  \widehat{\sigma}(p) (A_{q,\rho}+ p^\alpha)^{-1} \rho^{-1} (g_1-g_2)  & \mbox{in}\ L_\rho^2(\Omega),\\
W(p) & = & 0 & \mbox{in}\ L_\rho^2(\Omega'),
\end{array}
\right.
$$
where $W(p):=\int_0^{+\infty} e^{-tp} w(t,\cdot) dt$ and $\widehat{\sigma}(p):= \int_0^{+\infty} e^{-tp} \sigma(t) dt$ are the Laplace transforms of $w$ and $\sigma$, respectively. Notice from \eqref{hyp-sig} that $\widehat{\sigma}(p)$ is well-defined for each $p \in (0,+\infty)$.

Now,  by mimicking the last three steps of the derivation of Theorem \ref{t1}, we obtain that 
$g_1=g_2$ in $\Omega$. Therefore, condition (ii) of Corollary \ref{c1} is fulfilled and we deduce from Corollary \ref{c1} that \eqref{impl-c1} holds true. This terminates the proof of Theorem \ref{t1bis}.

 Analogously, we obtain the following result for (IP2) upon arguing is in the proof of Theorem \ref{t1bis}.

\begin{thm}
\label{t2bis} 
For $j=1,2$, let $h_j \in L^2(-L,L)$ and let $g_j \in L^1(0,T;L^2(\omega))$ fulfill
\bel{cs-a}
g_1(t,x')=g_2(t,x')=g(t,x'),\ (t,x') \in (0,t_0) \times \omega,
\ee
for some $t_0 \in (0,T)$, where $t \mapsto h(t,\cdot)$ is a non-zero holomorphic $L^2(\omega)$-valued function in the complex half-strip $\cS_\delta$ introduced in Theorem \ref{t1bis}, which grows no faster than polynomials: 
$$
\exists C>0,\ \exists N \in \N,\ \norm{g(t,\cdot)}_{L^2(\omega)} \leq C (1+ t)^N,\ t\in (0,+\infty).
$$
Assume moreover that $h_1$ is not identically zero in $\Omega$. Then,  we have $g_1=g_2$ in $(0,T) \times \omega$ and $h_1=h_2$ in $(-L,L)$, whenever the two following conditions
$$
\partial_\nu^ku_1(t,x)=\partial_\nu^ku_2(t,x),\ (t,x)\in (0,T)\times\gamma,\ k=0,1, 
$$
are satisfied for some non-empty open subset $\gamma$ of $\partial \Omega$.
\end{thm}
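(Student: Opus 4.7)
The plan is to mimic the proof of Theorem~\ref{t1bis} with Theorem~\ref{t2} and Corollary~\ref{c2} taking the role of Theorem~\ref{t1} and Corollary~\ref{c1}. First I would establish $h_1=h_2$ via a holomorphic-extension argument, and then deduce $g_1=g_2$ by invoking Corollary~\ref{c2}(i) on the resulting pair. Concretely, using the holomorphy of $g$ on $\cS_\delta$ together with its polynomial growth, extend the common value $g=g_1=g_2$ from $(0,t_0)\times\omega$ to a function on $(0,+\infty)\times\omega$ which lies in $L^1_{\mathrm{loc}}((0,+\infty);L^2(\omega))$ with polynomial bound in $t$. Let $w$ be the unique weak solution on $(0,+\infty)\times\Omega$ of the IBVP \eqref{eq1} (with $\rho=1$ and $\cA_q=-\Delta$) driven by the source
$$F(t,x',x_d):=g(t,x')\bigl(h_1(x_d)-h_2(x_d)\bigr)\mathbf{1}_{\Omega_0}(x',x_d).$$
Arguing as in the derivation of \cite[Theorem~1.4]{LKS} used in the proof of Theorem~\ref{t1bis}, the polynomial growth of $g$ forces $t\mapsto w(t,\cdot)\in L^2(\Omega)$ to be real-analytic on $(0,+\infty)$. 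Since $g_1=g_2=g$ on $(0,t_0)\times\omega$, the difference $u_1-u_2$ solves the same IBVP as $w$ on $(0,t_0)\times\Omega$, so by uniqueness $w=u_1-u_2$ there; in particular, both $w$ and $\partial_\nu w$ vanish on $(0,t_0)\times\gamma$.

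The main obstacle is to propagate this vanishing of the Cauchy pair from $(0,t_0)\times\gamma$ to the whole half-line $(0,+\infty)\times\gamma$, since the mere $L^2(\Omega)$-analyticity of $w$ does not give a meaning to $\partial_\nu w|_\gamma$. My plan is to exploit that $g$, being real-analytic on $(0,+\infty)$, lies in $L^r(a,b;L^2(\omega))$ for every finite $r$ and every compact $[a,b]\subset(0,+\infty)$, so the spectral regularity argument of Step~1 in the proof of Theorem~\ref{t2} upgrades $w$ to $\cC((a,b);H^{2\zeta}(\Omega))$ for some $\zeta\in(3/4,1)$. Trace theory then makes $t\mapsto\bigl(w(t,\cdot)|_\gamma,\partial_\nu w(t,\cdot)|_\gamma\bigr)$ a real-analytic function on $(0,+\infty)$ with values in $H^{2\zeta-1/2}(\gamma)\times H^{2\zeta-3/2}(\gamma)$, and since it vanishes on $(0,t_0)$ it vanishes identically on $(0,+\infty)$.

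Armed with this, Steps~2--3 of the proof of Theorem~\ref{t2} can be run \emph{verbatim} for $w$ in place of $u$: the Laplace transform $W(p)=\int_0^{+\infty}e^{-pt}w(t,\cdot)dt$ solves an elliptic BVP on a suitable extension $\Omega_\star\supset\Omega$, weak unique continuation yields $W(p)=\partial_{\nu_0}W(p)=0$ on $\partial\Omega_0$, and the Fourier/Paley--Wiener argument produces the product identity \eqref{t2e} with $h$ replaced by $h_1-h_2$ and with $G(p,x')=\int_0^{+\infty}e^{-pt}g(t,x')dt$. Since both $g$ and $h_1-h_2$ are compactly supported in their respective space variables, their Fourier--Laplace transforms are entire, so the vanishing product identity forces one of the two factors to vanish identically; the hypothesis $g\not\equiv 0$, combined with injectivity of the Laplace transform in $t$, rules out the first possibility, whence $h_1=h_2$ in $(-L,L)$. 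With this equality in hand, Corollary~\ref{c2}(i) applied directly to $(u_1,u_2)$ (or its proof, invoked with the analyticity of $g$ on $(0,t_0)$ to compensate for the weaker $L^1$ regularity of $g_j$) delivers $g_1=g_2$ in $(0,T)\times\omega$, completing the proof.
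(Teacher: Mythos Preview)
Your overall strategy---analytically extend the solution in time using the holomorphy and polynomial growth of $g$, run the elliptic and Fourier arguments of Theorem~\ref{t2} on the Laplace transform, and finish with Corollary~\ref{c2}(i)---is exactly what the paper intends (it only says to argue as in Theorem~\ref{t1bis}), so the architecture is correct.

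There is, however, a gap in the step you yourself flag as ``the main obstacle''. You argue that $t\mapsto w(t,\cdot)$ is real-analytic with values in $L^2(\Omega)$ and, separately, that $w\in\cC((a,b);H^{2\zeta}(\Omega))$; from this you conclude that the trace pair $t\mapsto(w(t,\cdot)_{|\gamma},\partial_\nu w(t,\cdot)_{|\gamma})$ is real-analytic. But $L^2$-analyticity together with $H^{2\zeta}$-continuity does \emph{not} imply $H^{2\zeta}$-analyticity, which is what the trace map needs. One could try to repair this by redoing the contour-integral analyticity argument of \cite[Theorem~1.4]{LKS} at the level of $D((A+1)^\zeta)$, but you do not carry that out.

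The route closer to the paper (and to Step~2 of the proof of Theorem~\ref{t2}) sidesteps this entirely: perform the domain extension to $\Omega_\star\supset\Omega$ \emph{before} any analytic continuation, so that the Cauchy condition $u_1-u_2=\partial_\nu(u_1-u_2)=0$ on $(0,T)\times\gamma$ becomes the \emph{internal} condition $u_1-u_2=0$ on $(0,T)\times\Omega'$ with $\Omega':=\Omega_\star\setminus\overline{\Omega}$. Then define $w$ as the solution on $(0,+\infty)\times\Omega_\star$ with source $g(t,x')(h_1-h_2)(x_d)\mathds 1_{\Omega_0}$; the polynomial-growth argument gives analyticity of $t\mapsto w(t,\cdot)$ only in $L^2(\Omega_\star)$, but since restriction to $\Omega'$ is a bounded linear map on $L^2(\Omega_\star)$, this already suffices to propagate $w_{|\Omega'}=0$ from $(0,t_0)$ to $(0,+\infty)$. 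From there Steps~2--3 of Theorem~\ref{t2} run verbatim. Your approach and this one coincide after the Laplace transform; the only difference is the order of ``extend the domain'' and ``extend analytically in time'', and doing the domain extension first buys you exactly the regularity you were missing.

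Your parenthetical caveat about the final invocation of Corollary~\ref{c2}(i) (the $L^r$ versus $L^1$ regularity and the support condition on $g_j$) is well taken; the same looseness is present in the paper's own proof of Theorem~\ref{t1bis} when it appeals to Corollary~\ref{c1}, so you are not introducing a new difficulty there.
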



\section{Distributed order diffusion equations}
\label{sec-DO}
 In this section we adapt the analysis carried out in the preceding sections to the framework of distributed order fractional diffusion equations. More precisely, we consider the IBVP 
\begin{equation}
\label{equ-u-distri}
\left\{ \begin{array}{rcll} 
(\rho(x) \mathbb D^{(\mu)}_t +\cA_q) u(t,x) & = & f(t,x), & (t,x) \in  Q,\\
B_\star u(t,x) & = & 0, & (t,x) \in \Sigma, \\  
u(0,x) & = & 0, & x \in \Omega.
\end{array}
\right.
\end{equation}
where $\mathbb D^{(\mu)}_t$ denotes the distributed order fractional derivative
$$
\mathbb D^{(\mu)}_t h(t) := \int_0^1 \mu(\alpha) \pd^\alpha_t h(t) d\alpha,
$$
induced by a non-negative weight function $\mu \in L^\infty(0,1)$, obeying the following condition:
\bel{mu}
\exists \alpha_0 \in(0,1),\ \exists \delta \in (0,\alpha_0),\ \forall \alpha \in (\alpha_0-\delta,\alpha_0),\ \mu(\alpha) \ge \frac{\mu(\alpha_0)}{2}>0.
\ee

We aim to recover the source term $f$ expressed by one of the two prescribed forms \eqref{source1} or \eqref{source2}, from either internal or lateral data. But, prior to doing this, we examine the direct problem associated with the IBVP \eqref{equ-u-distri} and we show that it admits a unique solution whenever $f \in L^1(0,T;L^2(\Omega))$.

\subsection{The direct problem}
Here and in the remaining part of this section, $\rho$ and $\cA_q$ are the same as in Section \ref{sec-settings}, and $\partial^\alpha_t$ is the Caputo derivative of order $\alpha$ defined by \eqref{cap}. Let $f \in L^1(0,T;L^2(\Omega))$. We stick with the definition \cite[Definition 1.1]{LKS} of a weak solution to \eqref{equ-u-distri}, that is to say that $u$ is a weak solution to \eqref{equ-u-distri} if we have $u=v_{|Q}$ for some $v \in \cS'(\R_+,L^2(\Omega))$ whose Laplace transform $V$ verifies the following BVP for all $p \in (0,+\infty)$, 
\bel{eeeq1}
\left\{ \begin{array}{rcl} 
(\cA_q +\rho p \vartheta(p)) V(p) & =  & \int_0^Te^{-pt}f(t,\cdot)\ \mbox{in}\ \Omega, \\
B_\star V(p)& = & 0\ \mbox{on}\ \pd\Omega,
\end{array}
\right.
\ee
where $\vartheta(p):=\int_0^1 p^{\alpha-1} \mu(\alpha) d \alpha$.

We recall from \cite[Theorems 1.1 and 1.2]{LKS} that under the more restrictive assumption $f\in L^\infty(0,T;L^2(\Omega))$, the IBVP \eqref{equ-u-distri} admits a unique solution $u \in \cC([0,T], L^2(\Omega)) \cap L^1(0,T;H^{2 \zeta}(\Omega))$ for every $\zeta \in (0,1)$. 
Moreover, by \cite[Proposition 2.1]{LKS}, $u$ enjoys the following representation formula
\bel{di1}
u(t,\cdot)=\int_0^t S_\mu(t-s) f(s,\cdot) ds,\ t \in (0,T),
\ee
where
\bel{di2} 
S_\mu(t) \psi := \frac{1}{2i\pi} \sum_{n=1}^{+\infty}  \sum_{k=1}^{m_n} \left( \int_{\gamma(\epsilon,\theta)}  \frac{e^{pt}}{\vartheta(p)+\lambda_n} dp \right)
 \langle \rho^{-1}\psi,\varphi_{n,k}\rangle_{L_\rho^2(\Omega)} \varphi_{n,k},\ \psi \in L^2(\Omega).
\ee
In \eqref{di2}, the pair $(\epsilon,\theta)$ is arbitrary in $(0,+\infty) \times \left( \frac{\pi}{2} , \pi \right)$, the contour $\gamma(\epsilon,\theta)$ is given by \eqref{g1}-\eqref{g2}, and the $\lambda_n$, $m_n$ and $\varphi_{n,k}$ are the same as in the proof of Theorem \ref{t1}.

Let us now extend \eqref{di1}-\eqref{di2} to the case of source terms $f \in L^1(0,T;L^2(\Omega))$.

\begin{prop} 
\label{p2} 
Assume \eqref{mu} and let $f \in L^1(0,T;L^2(\Omega))$. Then, for every $\zeta \in (0,1)$, there exists a unique weak solution $u \in  L^1(0,T;H^{2 \zeta}(\Omega))$ to \eqref{equ-u-distri}, which is expressed by \eqref{di1}-\eqref{di2}.
\end{prop}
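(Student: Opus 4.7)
}
The plan is to extend the existing $L^\infty$-theory from \cite{LKS} to $L^1$-source terms by a density-and-continuity argument, driven by a quantitative regularizing estimate on the solution operator $S_\mu(t)$ defined in \eqref{di2}. The main work is concentrated in obtaining this estimate.

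First, I would establish that for every $\zeta \in (0,1)$ there exist constants $C>0$ and $\beta \in (0,1)$ such that
\[
\norm{S_\mu(t) \psi}_{H^{2\zeta}(\Omega)} \leq C t^{-\beta}\norm{\psi}_{L^2(\Omega)},\qquad t \in (0,T),\ \psi \in L^2(\Omega).
\]
To derive this, I would exploit the spectral representation \eqref{di2} together with the embedding $D((A_{q,\rho}+1)^\zeta) \hookrightarrow H^{2\zeta}(\Omega)$ used in the proof of Theorem \ref{t2}, so that it suffices to bound $(1+\lambda_n)^\zeta\abs{\int_{\gamma(\epsilon,\theta)} e^{pt}(\vartheta(p)+\lambda_n)^{-1}dp}$ uniformly in $n$ by an integrable power of $t$. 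Condition \eqref{mu} is crucial here: it forces $\re \vartheta(p) \gtrsim \abs{p}^{\alpha_0}$ for $\abs{p}$ large on $\gamma(\epsilon,\theta)$, which provides a resolvent estimate analogous to \eqref{res-est} and allows one to optimize the contour with respect to $t$ (typically taking $\epsilon \sim t^{-1}$), yielding the singular bound $t^{-\beta}$ with $\beta$ close to $1-\alpha_0(1-\zeta)$. This is precisely the distributed-order analogue of the Mittag-Leffler estimates used in Step~1 of the proof of Theorem \ref{t2}, and it has presumably already been carried out in \cite{LKS} to handle the $L^\infty$-case.

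With such an estimate in hand, the map
\[
\Lambda: f \longmapsto \int_0^{\cdot} S_\mu(\cdot-s) f(s,\cdot)ds
\]
is bounded from $L^1(0,T;L^2(\Omega))$ into $L^1(0,T;H^{2\zeta}(\Omega))$ by Young's convolution inequality, since $t\mapsto t^{-\beta}$ lies in $L^1(0,T)$. Given $f \in L^1(0,T;L^2(\Omega))$, I would then invoke the density of $L^\infty(0,T;L^2(\Omega))\cap\mathcal C((0,T],L^2(\Omega))$ in $L^1(0,T;L^2(\Omega))$ to approximate $f$ by a sequence $(f_n)$ in this dense subspace. By \cite[Theorems 1.1--1.2, Proposition 2.1]{LKS}, each $u_n:=\Lambda f_n$ is the unique weak solution to \eqref{equ-u-distri} with source $f_n$. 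The bound on $\Lambda$ shows that $(u_n)$ is Cauchy in $L^1(0,T;H^{2\zeta}(\Omega))$ and hence converges to $u:=\Lambda f$ in that space.

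It remains to verify that $u$ is a weak solution in the sense of \eqref{eeeq1} and that it is unique. Taking Laplace transforms in $t$, continuity of $\mathcal L$ on $L^1(0,+\infty;L^2(\Omega))$ gives $U_n(p) \to U(p)$ in $L^2(\Omega)$ for each $p \in (0,+\infty)$. Since each $U_n(p)$ solves \eqref{eeeq1} with data $\int_0^T e^{-pt} f_n(t,\cdot)dt$, passing to the limit in the bounded inverse $(\cA_q+\rho p\vartheta(p))^{-1}$ shows that $U(p)$ solves \eqref{eeeq1}, establishing that $u$ is a weak solution. Uniqueness then follows from the injectivity of the Laplace transform applied to a candidate difference $v \in \cS'(\R_+,L^2(\Omega))$: any two solutions have the same $V(p)$ for all $p\in(0,+\infty)$ because $\cA_q+\rho p\vartheta(p)$ is boundedly invertible on $L^2(\Omega)$.

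The main obstacle is the first step. Unlike the pure fractional case where $p \mapsto p^\alpha$ has a clean homogeneity that makes the contour analysis transparent, $\vartheta(p)=\int_0^1 p^{\alpha-1}\mu(\alpha)d\alpha$ is only implicitly controlled, and the bound $\re \vartheta(p) \gtrsim \abs{p}^{\alpha_0}$ for large $\abs{p}$ must be coupled with a separate estimate for small $\abs{p}$ to get a uniform control of the contour integral. The exponent $\beta$ that eventually comes out will dictate the admissible regularity $\zeta$; showing $\beta<1$ for every $\zeta<1$ is where \eqref{mu} is used in full force.
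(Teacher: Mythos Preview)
Your proposal is correct and follows the same overall architecture as the paper --- approximate $f$ by smooth data, use the Duhamel representation \eqref{di1}--\eqref{di2}, and pass to the limit at the level of Laplace transforms to verify \eqref{eeeq1} --- but the technical emphasis differs in one respect worth noting. You aim for a direct $L^2 \to H^{2\zeta}$ smoothing estimate on $S_\mu(t)$ (with exponent $\beta$ close to $1-\alpha_0(1-\zeta)$), which would yield the $L^1(0,T;H^{2\zeta}(\Omega))$ regularity in one stroke via Young's inequality. The paper instead proves only the weaker $L^2\to L^2$ bound $\norm{S_\mu(t)}_{\cB(L^2(\Omega))}\leq C\max(t^{\alpha_0-\delta-1},t^{\alpha_0-1})$, derived from the resolvent estimate \eqref{p2d} of \cite[Lemma 2.2]{LKS} with the contour choice $\epsilon=t^{-1}$; this suffices to make the Laplace transform $V(p)$ of the Duhamel expression well defined and to pass to the limit in the resolvent equation $(A_{q,\rho}+p\vartheta(p))V(p)=F(p)$, after which the $H^{2\zeta}$ regularity is simply delegated to the argument of \cite[Theorem 1.2]{LKS}. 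Your route is more self-contained but requires the extra interpolation $(1+\lambda_n)^\zeta\abs{\vartheta(p)+\lambda_n}^{-1}\lesssim \abs{p}^{-\alpha_0(1-\zeta)}$, which is not stated in \eqref{p2d} as written and needs the companion bound $\abs{\vartheta(p)+\lambda_n}\gtrsim \lambda_n$ on the contour; the paper's route sidesteps this by recycling \cite{LKS} wholesale. One small point you gloss over and the paper handles explicitly: the weak-solution definition requires an extension $v\in\cS'(\R_+,L^2(\Omega))$, so the Duhamel formula must be written on $(0,+\infty)$ with the truncated source $\mathds{1}_{(0,T)}f$ before taking Laplace transforms.
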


\begin{proof}
Let $(f_n)_{n \in\N} \in \cC^\infty_0(0,T;L^2(\Omega))^{\N}$ be an approximating sequence of $f$ in $L^1(0,T;L^2(\Omega))$, i.e. such that
\bel{p2a}
\lim_{n\to\infty} \norm{f_n-f}_{L^1(0,T;L^2(\Omega))}=0.
\ee
Next, with reference to \eqref{di1}, we introduce for all $n \in \N$
$$ v_n(t,\cdot) :=\int_0^t S_\mu(t-s) \mathds{1}_{(0,T)}(s) f_n(s,\cdot) ds,\ t \in [0,+\infty), $$
in $\cS'(\R_+,L^2(\Omega))$, where $S_\mu$ is given by \eqref{di2} and $\mathds{1}_{(0,T)}$ denotes the characteristic function of the interval $(0,T)$.


As $f_n \in L^\infty(0,T;L^2(\Omega))$ for all $n \in \N$, the Laplace transform $V_n$ of $v_n$, verifies
\bel{p2b}
(A_{q,\rho}+p\vartheta(p)) V_n(p) =  \int_0^Te^{-pt}f_n(t,\cdot)dt,\ p \in (0,+\infty),
\ee
according to \cite[Proposition 2.1]{LKS}. Moreover, we have
\bel{di3}
\limsup_{n \to \infty} \norm{\int_0^Te^{-pt}f_n(t,\cdot)dt-\int_0^Te^{-pt}f(t,\cdot)dt}_{L^2(\Omega)} \leq \limsup_{n \to \infty} \norm{f_n-f}_{L^1(0,T;L^2(\Omega))}=0,\ p \in (0,+\infty), 
\ee
from \eqref{p2a}.

The next step of the proof is to establish for all $p \in (0,+\infty)$ that the Laplace transform $V(p)$ of the $L^2(\Omega)$-valued tempered distribution in $[0,+\infty)$,
\bel{di6}
t \mapsto v(t,\cdot) :=\int_0^t S_\mu(t-s)\mathds{1}_{(0,T)}(s)f(s,\cdot)ds
\ee
is well-defined in $L^2(\Omega)$ and verifies
\bel{p2c}
\limsup_{n \to \infty} \norm{V_n(p)-V(p)}_{L^2(\Omega)}=0.
\ee
To this purpose, we recall the following estimate from \cite[Lemma 2.2]{LKS},
\bel{p2d}
\frac{1}{\abs{\vartheta(p)+\lambda_n}}\leq C\max(\abs{p}^{-\alpha_0+\delta},\abs{p}^{-\alpha_0}),\ p \in \C \setminus (-\infty,0],\ n \in \N,
\ee
where the positive constant $C$ is independent of $n$ and $p$. Indeed, for all $t \in (0,+\infty)$ and all $\psi \in L^2(\Omega)$,  we infer from \eqref{p2d} upon taking $\epsilon=t^{-1}$ in \eqref{g2} that
\beas
& & \norm{\sum_{n=1}^{+\infty} \sum_{k=1}^{m_n} \left( \int_{\gamma_0(\epsilon,\theta)} \frac{e^{pt}}{\vartheta(p)+\lambda_n} dp \right) \langle \rho^{-1}\psi, \varphi_{n,k}\rangle_{L_\rho^2(\Omega)} \varphi_{n,k}}_{L^2(\Omega)}\\
&\leq & C \max(t^{\alpha_0-\delta-1},t^{\alpha_0-1}) \abs{\int_{-\theta}^{\theta}e^{\cos \beta}d\beta}^2 \norm{\psi}_{L^2(\Omega)} \\
\eeas
and
\beas
& & \norm{\sum_{n=1}^{+\infty}\sum_{k=1}^{m_n}
\left( \int_{\gamma_\pm(\epsilon,\theta)}\frac{e^{pt}}{\vartheta(p)+\lambda_n} dp \right)
\langle \rho^{-1}\psi,\varphi_{n,k}\rangle_{L_\rho^2(\Omega)} \varphi_{n,k}}_{L^2(\Omega)}\\
&\leq & C \abs{\int_{t^{-1}}^{+\infty} \max( r^{-\alpha_0+\delta},r^{-\alpha_0} ) e^{t r \cos \theta} dr} \norm{\psi}_{L^2(\Omega)}\\
&\leq & C t^{-1} \abs{\int_{1}^{+\infty} \max ( (t^{-1} r)^{-\alpha_0+\delta}, (t^{-1} r)^{-\alpha_0}  ) e ^{r \cos \theta} dr} \norm{\psi}_{L^2(\Omega)}\\
&\leq&  C \max(t^{\alpha_0-\delta-1},t^{\alpha_0-1}) \abs{\int_{1}^{+\infty} e^{r\cos\theta}dr} \norm{\psi}_{L^2(\Omega)}.
 \eeas
Putting these two estimates together with \eqref{g1} and \eqref{di2}, we obtain that
\bel{p2e}
\norm{S_\mu(t)}_{\cB(L^2(\Omega))} \leq C \max(t^{\alpha_0-\delta-1},t^{\alpha_0-1}),\ t \in (0,+\infty),
\ee
for some constant $C$ that is independent of $t$. Thus, it holds true for all $p \in (0,+\infty)$ that
$$ t \mapsto y_p(t):= e^{- p t} \norm{S_\mu(t)}_{\cB(L^2(\Omega))} \in L^1(0,+\infty). $$
Moreover, setting $\tilde{f}_p(t) := \mathds{1}_{(0,T)}(t) e^{-p t} \norm{f(t,\cdot)}_{L^2(\Omega)}$ for a.e. $t \in (0,+\infty)$, we  obtain for each $p \in (0,+\infty)$ that
\beas
& &e^{-pt} \norm{\int_0^t S_\mu(t-s) \mathds{1}_{(0,T)}(s) f(s,\cdot) ds}_{L^2(\Omega)}\\
&\leq & \int_0^t e^{-p(t-s)} \norm{S_\mu(t-s)}_{\mathcal B(L^2(\Omega))}\mathds{1}_{(0,T)}(s)e^{-ps}\norm{f(s,\cdot)}_{L^2(\Omega)}ds \\
&\leq & (y_p \ast \tilde{f}_p)(t),
\eeas
where the symbol $\ast$ denotes the convolution in $(0,+\infty)$. Therefore, we find for every fixed $p \in (0,+\infty)$ that
$$\int_0^{+\infty} e^{-pt} \norm{\int_0^t S_\mu(t-s) \mathds{1}_{(0,T)}(s) f(s,\cdot) ds}_{L^2(\Omega)} dt$$ is upper bounded by
$\norm{y_p \ast \tilde{f}_p}_{L^1(0,+\infty)}$, and hence by $\norm{y_p}_{L^1(0,+\infty)} \norm{\tilde{f}_p}_{L^1(0,+\infty)}$, 
which  we combine with \eqref{p2e} to yield
\bel{di4}
\int_0^{+\infty} e^{-pt} \norm{\int_0^t S_\mu(t-s) \mathds{1}_{(0,T)}(s) f(s,\cdot) ds}_{L^2(\Omega)} dt \leq C \max( p^{\delta-\alpha_0}, p^{-\alpha_0}) \norm{f}_{L^1(0,T;L^2(\Omega))},
\ee
for some positive constant $C$ that is independent of $p$. As a consequence, $V(p)$ is well-defined in $L^2(\Omega)$ and satisfies
$$ \norm{V(p)}_{L^2(\Omega)} \leq C \max( p^{\delta-\alpha_0}, p^{-\alpha_0}) \norm{f}_{L^1(0,T;L^2(\Omega))},\ p \in (0,+\infty). $$
Arguing as before with $f-f_n$ instead of $f$, we  obtain
$$ \norm{V(p)-V_n(p)}_{L^2(\Omega)} \leq C \max( p^{\delta-\alpha_0}, p^{-\alpha_0})\norm{f-f_n}_{L^1(0,T;L^2(\Omega))},\ p \in (0,+\infty),\ n \in \N. $$
 Using this along with \eqref{p2a} we get \eqref{p2c}.

With reference to \eqref{eeeq1} we are left with the task of proving that $V(p)$ lies in $D(A_{q,\rho})$, the domain of the operator $A_{q,\rho}$, and verifies
\bel{di5}
(A_{q,\rho}+p\vartheta(p)) V(p) =  F(p),\ p \in (0,+\infty).
\ee
To do that, we recall from the very definition of the function $\vartheta$ that $p \vartheta(p) >0$ for all $p \in (0,+\infty)$, and hence that the operator $A_{q,\rho}+ p \vartheta(p)$ is lower bounded by $\lambda_1  >0$ in $L_\rho^2(\Omega)$, according to \eqref{ell}-\eqref{eq-rho}. Thus, $A_{q,\rho}+p \vartheta(p)$ is boundedly invertible in $L_\rho^2(\Omega)$ and we have
$\norm{(A_{q,\rho}+p \vartheta(p))^{-1}}_{\cB(L_\rho^2(\Omega))} \leq \lambda_1^{-1}$ for all $p \in (0,+\infty)$. Therefore, since $V_n(p)=(A_{q,\rho}+p \vartheta(p))^{-1} F_n(p)$ for all $n \in \N$ and all $p \in (0+\infty)$, from \eqref{p2b}, we  have
\beas
\norm{(A_{q,\rho}+p \vartheta(p))^{-1} F(p) - V_n(p)}_{L_\rho^2(\Omega)} & \leq & \norm{(A_{q,\rho}+p \vartheta(p))^{-1}}_{\cB(L^2(\Omega)} \norm{F(p)-F_n(p)}_{L_\rho^2(\Omega)} \\
& \leq & \rho_M^{\frac{1}{2}} \lambda_1 \norm{F(p)-F_n(p)}_{L^2(\Omega)}.
\eeas
In light of \eqref{di3}, this  implies that
$$ \lim_{n \to +\infty} \norm{(A_{q,\rho}+p \vartheta(p))^{-1} F(p) - V_n(p)}_{L^2(\Omega)}=0,\ p \in (0,+\infty). $$
From this, \eqref{p2c} and the uniqueness of the limit in $L^2(\Omega)$, it then follows that $V(p)=(A_{q,\rho}+p \vartheta(p))^{-1} F(p)$, which is \eqref{di5}.

Therefore, $u=v_{| Q}$, where $v$ is defined by \eqref{di6}, is a weak solution to \eqref{equ-u-distri}. Now, in accordance with \cite[Section 1.4]{LKS},  we can finish the proof in the same way as in the one of \cite[Theorem 1.2]{LKS}.
\end{proof}

\begin{rmk}
\label{rmk-andi}
The representation formula \eqref{di1}-\eqref{di2} of the solution to \eqref{equ-u-distri} was obtained by replacing $S$ by $S_\mu$ in \eqref{tt1d}-\eqref{tt1e}. Therefore, if $f \in L^1(0,T;L^2(\Omega))$ is supported in $[0,T-3\epsilon_\star]\times\overline{\Omega}$ for some $\epsilon_\star \in \left( 0,\frac{T}{4} \right)$, then by substituting $S_\mu$ for $S$ in the derivation of Proposition \ref{pr1}, we see that the weak solution $u$ to \eqref{equ-u-distri} extends to an $L^2(\Omega)$-valued map which is analytic in $ \cC_{\theta_\star}$. Here, $\theta_\star$ can be any angle in $\left( 0, \min \left( \frac{\pi}{4} , \frac{\pi}{2\alpha} - \frac{\pi}{4} \right) \right)$ and $\cC_{\theta_\star}$ is defined in Proposition \ref{pr1}.
Moreover, the extended function $t \mapsto u(t,\cdot)$ is holomorphic in $\cC_{\theta_\star}$ and its Laplace transform $U(p)$ is a solution to \eqref{eeeq1} for all $p \in (0,+\infty)$. 
\end{rmk}

\subsection{Uniqueness result}

 First we extend the result of Theorem \ref{t1} to the case of distributed orders.

\begin{thm}
\label{t5} 
 Let $f$ be defined by \eqref{source1}, where $\sigma \in L^1(0,T)$ is supported in $[0,T)$ and 
$g \in L^2(\Omega)$.
Assume \eqref{mu} and denote by $u$ the weak-solution to \eqref{equ-u-distri}. Then, for any non-empty open subset
$\Omega' \subset \Omega$, we have the implication:
$$
 u=0\ \mbox{in}\ (0,T) \times \Omega'  \Longrightarrow  f=0\ \mbox{in}\ \Omega .
$$
\end{thm}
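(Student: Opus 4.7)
The plan is to adapt the proof of Theorem \ref{t1} to the distributed order setting by replacing the symbol $p^\alpha$ with $p\vartheta(p)$ and inserting a change-of-variables step to recover the meromorphic framework used there. First, since $\supp \sigma \subset [0,T)$, one picks $\epsilon_\star \in \left(0,\frac{T}{4}\right)$ such that $\supp \sigma \subset [0,T-3\epsilon_\star]$ and invokes Remark \ref{rmk-andi}: the solution $u$ of \eqref{equ-u-distri} extends to a $L^2(\Omega)$-valued function holomorphic in $\cC_{\theta_\star}$ for a suitable $\theta_\star$. Restricting to $\Omega'$ and using the unique continuation principle for holomorphic functions together with the hypothesis $u=0$ on $(0,T)\times\Omega'$ and the overlap $[T-\epsilon_\star,T)\subset \cC_{\theta_\star}\cap(0,T)$, one concludes $u(t,\cdot)_{|\Omega'}=0$ for every $t\in(0,+\infty)$. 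Applying the Laplace transform and using \eqref{eeeq1} together with the bounded invertibility of $A_{q,\rho}+p\vartheta(p)$ in $L_\rho^2(\Omega)$, one obtains, for each $p\in(0,+\infty)$,
\begin{equation*}
U(p)=\widehat{\sigma}(p)\bigl(A_{q,\rho}+p\vartheta(p)\bigr)^{-1}\rho^{-1}g \ \mbox{in}\ L_\rho^2(\Omega),\qquad U(p)=0\ \mbox{in}\ L_\rho^2(\Omega'),
\end{equation*}
with $\widehat{\sigma}(p):=\int_0^T e^{-pt}\sigma(t)\,dt$.

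The second stage is the spectral expansion. Using the eigensystem $(\lambda_n,\varphi_{n,k})$ of $A_{q,\rho}$ introduced in the proof of Theorem \ref{t1}, one rewrites the above as
\begin{equation*}
\widehat{\sigma}(p)\sum_{n=1}^{+\infty}\frac{\sum_{k=1}^{m_n}g_{n,k}\varphi_{n,k}(x)}{\lambda_n+p\vartheta(p)}=0,\quad x\in\Omega',\ p\in(0,+\infty),
\end{equation*}
where $g_{n,k}=\langle \rho^{-1}g,\varphi_{n,k}\rangle_{L_\rho^2(\Omega)}$. Since $\widehat{\sigma}$ is holomorphic in $\C_+$, either $\widehat{\sigma}\equiv 0$ on $\C_+$, in which case the injectivity of the Laplace transform gives $\sigma=0$ in $(0,T)$ and hence $f=0$ in $Q$, or there is a nonempty open interval $I\subset(0,+\infty)$ on which $\widehat{\sigma}$ does not vanish.

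The crux, and main new point compared to Theorem \ref{t1}, is to exploit the second case. Define $\Phi(p):=p\vartheta(p)=\int_0^1 p^{\alpha}\mu(\alpha)\,d\alpha$, which is real-analytic on $(0,+\infty)$ with derivative $\int_0^1 \alpha p^{\alpha-1}\mu(\alpha)\,d\alpha>0$ by \eqref{mu}, and which satisfies $\lim_{p\to 0^+}\Phi(p)=0$ and $\lim_{p\to+\infty}\Phi(p)=+\infty$. Hence $\Phi:(0,+\infty)\to(0,+\infty)$ is a continuous, strictly increasing bijection, so the image $J:=\Phi(I)$ is a nonempty open sub-interval of $(0,+\infty)$. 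Introducing, exactly as in \eqref{def-R}, the $L_\rho^2(\Omega')$-valued meromorphic function
\begin{equation*}
R(z):=\sum_{n=1}^{+\infty}\frac{\sum_{k=1}^{m_n}g_{n,k}\varphi_{n,k}}{\lambda_n+z},\qquad z\in\C\setminus\{-\lambda_n:n\in\N\},
\end{equation*}
one has $R(\Phi(p))=0$ in $L_\rho^2(\Omega')$ for every $p\in I$, whence $R\equiv 0$ on $\C\setminus\{-\lambda_n:n\in\N\}$ by meromorphic continuation. Looking at residues at each simple pole $-\lambda_n$ yields $\sum_{k=1}^{m_n}g_{n,k}\varphi_{n,k}=0$ in $L_\rho^2(\Omega')$ for every $n\in\N$.

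The final step is identical to Step 4 of the proof of Theorem \ref{t1}: by the weak unique continuation principle for the second-order elliptic operator $\cA_q-\lambda_n\rho$, each family $\{\varphi_{n,k}:k=1,\ldots,m_n\}$ is linearly independent in $L_\rho^2(\Omega')$, so all $g_{n,k}$ vanish, giving $g=0$ in $L_\rho^2(\Omega)$ and therefore $f=0$ in $Q$. The main obstacle is the change-of-variables step: the argument of Theorem \ref{t1} relies on $p\mapsto p^\alpha$ being an analytic bijection onto $(0,+\infty)$, and the corresponding property must now be verified for the compound symbol $p\vartheta(p)$, which is precisely what the positivity condition \eqref{mu} is used for.
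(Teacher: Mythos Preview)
Your proof is correct and follows essentially the same route as the paper: analytic extension via Remark~\ref{rmk-andi}, Laplace transform to an elliptic resolvent identity, spectral expansion, the dichotomy on $\widehat{\sigma}$, and then identification of the residues of the meromorphic function $R$ to force all $g_{n,k}=0$. The only point where you are slightly more explicit than the paper is the change-of-variables step: you verify that $\Phi(p)=p\vartheta(p)$ is a strictly increasing continuous bijection of $(0,+\infty)$ onto itself (using \eqref{mu}), so that $\Phi(I)$ is a genuine open interval on which $R$ vanishes; the paper instead records that the derivative of the symbol is strictly positive on $I$ and invokes meromorphy of $R$ directly, which amounts to the same thing.
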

\begin{proof} 
With reference to Remark \ref{rmk-andi},  we follow the same lines as in the derivation of \eqref{t1e} and get that
\bel{di7} 
\widehat{\sigma}(p) \left( \sum_{n=1}^{+\infty} \frac{\sum_{k=1}^{m_n}  g_{n,k} \varphi_{n,k} (x')}{\lambda_n+\vartheta(p)} \right)=0,\ x \in \Omega',\ p\in (0,+\infty),
\ee
where we used the notations introduced in Section \ref{sec-Duhamel}.  If $\widehat{\sigma}$ is identically zero, then we have $\sigma=0$ in $(0,T)$ by injectivity of the Laplace transform. Otherwise, we may assume that $\widehat{\sigma}(p) \neq 0$ for all $p \in I$, where $I$ is a non-empty subinterval of $(0,+\infty)$. In light of \eqref{di7}, this  yields
$$ 
\sum_{n=1}^{+\infty} \frac{\sum_{k=1}^{m_n}  g_{n,k} \varphi_{n,k} (x')}{\lambda_n+\vartheta(p)} =0,\ x \in \Omega',\ p \in I,
$$
and consequently
\bel{di8}
R(\vartheta(p))=0,\ p \in I,
\ee
in the $L_\rho^2(\Omega')$-sense, where the function $R$ is defined by \eqref{def-R}.
Next, bearing in mind that $\vartheta'(p)=\int_0^1\alpha p^{\alpha-1}\mu(\alpha)d\alpha$, we infer from \eqref{mu} that
$$
\vartheta'(p) \geq \int_{\alpha_0-\delta}^{\alpha_0}\alpha p^{\alpha-1}\mu(\alpha)d\alpha \geq \frac{\delta(\alpha_0-\delta)  \mu(\alpha_0)}{2} \min_{p \in I} (p^{\alpha_0-1},p^{\alpha_0-\delta-1})>0,\  p \in I. $$
From this, \eqref{di8} and the fact that $R$ is a meromorphic function in $\C \setminus \{ \lambda_n,\ n \in \N \}$, it then follows that
$$ \sum_{k=1}^{m_n}  g_{n,k} \varphi_{n,k}(x')=0,\ x' \in \Omega',\ n \in \N. $$
Therefore, we have $g_{n,k}=0$ for all $k=0,\ldots ,m_n$ and all $n \in \N$ and consequently $g=\sum_{n=1}^{+\infty} \sum_{k=1}^{m_n} g_{n,k} \varphi_{n,k}=0$ in $L_\rho^2(\Omega)$.
\end{proof}

The second  uniqueness result that we establish, corresponds to Theorem \ref{t2}.

\begin{thm}
\label{t6} 
Let $T$, $\Omega$, $\omega$, $L$, $\rho$ and $\cA_q$ be the same as in Theorem \ref{t2}. Denote by $u$ the solution to \eqref{equ-u-distri} where $f$ is defined by \eqref{source2}  with $\sigma \in L^1(0,T;L^2(\omega))$ and $g \in L^2(-L,L)$. 
Assume moreover that $f$ is supported in $[0,T) \times \omega$. Then, for any non-empty open subset $\gamma \subset \partial\Omega$, we have
$$
 u=\partial_{\nu} u=0\ \mbox{on}\ (0,T) \times \gamma  \Longrightarrow  f=0\ \mbox{in}\ (0,T) \times \Omega . 
$$ 
\end{thm}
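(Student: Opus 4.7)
The plan is to mimic the proof of Theorem \ref{t2}, replacing the elliptic operator $-\Delta+p^\alpha$ by $-\Delta+p\vartheta(p)$ and invoking Remark \ref{rmk-andi} in place of Proposition \ref{pr1}. The positivity $p\vartheta(p)>0$ for $p\in(0,+\infty)$ preserves the ellipticity structure needed for unique continuation and for the Fourier-type argument in the transverse variable. Since $f$ is compactly supported in $[0,T)\times\overline{\omega}$, fix $\epsilon_\star\in(0,T/4)$ with $\supp f\subset[0,T-3\epsilon_\star]\times\overline{\Omega}$. By Remark \ref{rmk-andi}, the weak solution $u$ to \eqref{equ-u-distri} extends to an $L^2(\Omega)$-valued function, still denoted by $u$, which is analytic in a sector $\cC_{\theta_\star}$. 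Choose $\Omega_\star\supset\Omega$ with $\cC^2$ boundary satisfying the analogue of \eqref{cond}, which is possible because $\Omega\setminus\overline{\Omega_0}$ is connected and $\partial\Omega$ is $\cC^2$, and extend $u$ and $f$ by zero to $\Omega':=\Omega_\star\setminus\overline{\Omega}$; the vanishing Cauchy data $u=\partial_\nu u=0$ on $(0,T)\times\gamma$ (with $\gamma\supset\partial\Omega\setminus\partial\Omega_\star$) turn the extended function into a weak solution on $\Omega_\star$ with homogeneous Dirichlet condition on $\partial\Omega_\star$. The time-analyticity then upgrades the vanishing $u=0$ on $(0,T)\times\Omega'$ to $u(t,x)=0$ for all $(t,x)\in(0,+\infty)\times\Omega'$.

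By Remark \ref{rmk-andi} applied on $\Omega_\star$, for each $p\in(0,+\infty)$ the Laplace transform $U(p):=\int_0^{+\infty}e^{-pt}u(t,\cdot)dt$ solves
$$\left\{\begin{array}{rcll}(-\Delta+p\vartheta(p))U(p)&=&F(p)&\mbox{in }\Omega_\star,\\ U(p)&=&0&\mbox{on }\partial\Omega_\star,\end{array}\right.$$
where $F(p):=\int_0^Te^{-pt}f(t,\cdot)dt$, together with $U(p)=0$ a.e.\ in $\Omega'$; elliptic regularity gives $U(p)\in H^2(\Omega_\star)$. Since $f$ is supported in $\overline{\Omega_0}$ in the space variable, $F(p)$ vanishes in $\Omega_\star\setminus\overline{\Omega_0}$, so $U(p)$ satisfies the homogeneous equation on the connected open set $\Omega_\star\setminus\overline{\Omega_0}$ and vanishes on the non-empty open subset $\Omega'$; the weak unique continuation principle for second-order elliptic equations yields $U(p)\equiv 0$ in $\Omega_\star\setminus\overline{\Omega_0}$. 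Taking traces, this gives $U(p)=\partial_{\nu_0}U(p)=0$ on $\partial\Omega_0$.

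Restricting to $\Omega_0$, $U(p)$ solves $(-\Delta+p\vartheta(p))U(p,x)=G(p,x')h(x_d)$ with zero Cauchy data on $\partial\Omega_0$, where $G(p,\cdot):=\int_0^Te^{-pt}g(t,\cdot)dt$. For $\theta\in\mathbb{S}^{d-2}$ and $k\in\R$, the test function $\Phi(x',x_d):=e^{-ik\theta\cdot x'}e^{\omega(p,k)x_d}$ with $\omega(p,k):=\sqrt{k^2+p\vartheta(p)}\in(0,+\infty)$ satisfies $(-\Delta+p\vartheta(p))\Phi=0$. Integration by parts against $\Phi$ on $\Omega_0$, using the vanishing Cauchy data of $U(p)$ on $\partial\Omega_0$, combined with Fubini's theorem, yields
$$\left(\int_\omega G(p,x')e^{-ik\theta\cdot x'}dx'\right)\left(\int_{-L}^Lh(x_d)e^{\omega(p,k)x_d}dx_d\right)=0.$$
We may assume $h\not\equiv 0$ (otherwise $f\equiv 0$ trivially); then $z\mapsto\int_{-L}^L h(x_d)e^{zx_d}dx_d$ is a non-zero entire function, so $k\mapsto\int_{-L}^L h(x_d)e^{\omega(p,k)x_d}dx_d$ is a non-identically-zero real-analytic function of $k$, hence non-vanishing on a non-empty open interval of values of $k$. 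Consequently, the partial Fourier transform of $G(p,\cdot)$ (extended by zero outside $\omega$) vanishes on a non-empty open subset of $\R^{d-1}$; since $G(p,\cdot)$ is compactly supported in $\overline{\omega}$, this Fourier transform is real-analytic on $\R^{d-1}$, so it vanishes identically. Injectivity of the Fourier transform and of the Laplace transform with respect to $t$ then yield $g=0$ on $(0,T)\times\omega$, whence $f=0$ in $Q$.

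The only technical checkpoints are the time-analytic extension furnished by Remark \ref{rmk-andi} and the elliptic regularity $U(p)\in H^2(\Omega_\star)$; both follow routinely from Proposition \ref{p2} and the positivity of $p\vartheta(p)$, and the remainder of the argument is strictly parallel to that of Theorem \ref{t2}.
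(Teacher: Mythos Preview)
Your proof is correct and follows essentially the same route as the paper's own argument: both transfer the proof of Theorem \ref{t2} to the distributed-order setting by replacing $p^\alpha$ with the symbol of the distributed-order derivative and invoking Remark \ref{rmk-andi} in place of Proposition \ref{pr1}. Your write-up is in fact more explicit than the paper's (you spell out the domain extension to $\Omega_\star$ and the unique continuation step, which the paper leaves implicit by pointing back to Theorem \ref{t2}), and your use of $p\vartheta(p)$ is the one consistent with \eqref{eeeq1}, whereas the paper's proof writes $\vartheta(p)$; this is immaterial for the argument since only positivity on $(0,+\infty)$ is used.
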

\begin{proof} 
Let the function $u$ be extended as in Remark \ref{rmk-andi}. Since its Laplace transform $U(p)$, $p \in (0,\infty)$, is solution to the BVP
$$
\left\{ \begin{array}{rcll} 
(-\Delta +\vartheta(p) ) U(p,\cdot) & = & F(p,\cdot), & \mbox{in}\ \Omega,\\
B_* U(p) & = & 0, & \mbox{on}\ \partial\Omega,\\
\end{array}
\right.
$$
where $F(p) \in L^2(\Omega)$, then we have $U(p) \in H^2(\Omega)$ by the elliptic regularity theorem. Therefore, taking into account that
$$ (-\Delta+\vartheta(p))e^{-ik \theta \cdot x'} e^{(k^2+\vartheta(p))^{1 \slash 2} x_d}=\left( k^2-(k^2+\vartheta(p)) +\vartheta(p) \right)e^{-ik\theta\cdot x'} e^{(k^2+\vartheta(p))^{1 \slash 2} x_d}=0, $$
for any $k \in \R$ and any $\theta \in \mathbb S^{d-2}$, and that
\bel{t6c}
\left\{ \begin{array}{rcll} 
(-\Delta +\vartheta(p) ) U(p,x) & = & \hat{\sigma}(p,x') g(x_d), & x=(x',x_d)\in \Omega_0,\\
U(p,x)=\partial_{\nu_0} U(p,x) & = & 0, & x \in \partial\Omega_0,\\
\end{array}
\right.
\ee
for all $p \in (0,+\infty)$, we find upon multiplying the first line in \eqref{t6c} by $e^{(k^2+\vartheta(p))^{1 \slash 2} x_d}$ and integrating by parts in $\Omega_0$, that:
$$ \int_{\Omega_0} \hat{\sigma}(p,x')h(x_d) g(x_d)e^{-ik\theta\cdot x'} e^{(k^2+\vartheta(p))^{1 \slash 2} x_d} dx'dx_d=0.$$
By the Fubini theorem, the above equality immediately leads to
$$\left(\int_{\omega} \hat{\sigma}(p,x') e^{-ik\theta\cdot x'}dx'\right)\left(\int_{-L}^L g(x_d) e^{(k^2+\vartheta(p))^{1 \slash 2} x_d}dx_d\right)=0, $$
for all $p \in (0,+\infty)$, all $k \in \R$ and all $\theta \in \mathbb S^{d-2}$, so the result follows from this upon arguing in the same way as in the proof of Theorem \ref{t2}.
\end{proof}


\section{Numerical reconstruction}
\label{sec-NR}

In this section we numerically solve the inverse problem (IP1) for $\alpha \in (1,2)$ by mean of
an iterative scheme based on the Tikhonov regularization method. In view of Theorem \ref{t1}, we seek numerical reconstruction of the two-dimensional spatial part $g$ of the source term \eqref{source1}
appearing in \eqref{eq1}.

\subsection{Iterative method}
\label{sec-IM}
We aim at building an efficient iterative scheme for numerical reconstruction of the spatial term $g$
of the source, from knowledge of the temporal term $\sigma$ 
and internal measurements for all times in a subregion $\Omega'$. 
A reconstruction algorithm { was proposed in \cite{JLLY} and \cite{JLY}, for $\alpha \in (0,1)$ and $\alpha=2$, respectively,  that we shall adapt to the case $\alpha \in (1,2)$. In accordance with Theorem \ref{t1}, we choose a time-varying part of the unknown source with compact support, instead of the non-compactly supported one which is considered in \cite{JLLY}.

We recall that the fractional Caputo derivative of order $\alpha \in (1,2)$ is defined by
$$\partial_t^\alpha u(t,x) := \frac{1}{\Gamma(2-\alpha)}\int_0^t (t-s)^{1-\alpha}\partial_s^2 u(s,x)ds, \quad (s,x)\in Q,$$
and that the backward fractional Caputo derivative reads
$$\partial_{t\_}^\alpha u(t,x) := \frac{1}{\Gamma(2-\alpha)}\int_t^T (s-t)^{1-\alpha}\partial_s^2 u(s,x)ds, \quad (s,x)\in Q.$$
For the homogeneous Neumann boundary condition, the forward problem under examination reads
\begin{equation}\label{eqn:forward}
\left\{
\begin{array}{ll}
(\partial_t^\alpha+\mathcal{A}_q)u(t,x)  =  g(x)\sigma(t), & (t,x) \in Q,\\
         \partial_{\nu_a}u(t,x)  =  0,          & (t,x) \in \Sigma,\\
   u(0,x) = \partial_tu(0,x)           =  0,          & x \in \Omega.
\end{array}
\right.
\end{equation}
Let us introduce the backward Riemann-Liouville fractional derivative of order $\alpha$  as
$$\partial_t^{\alpha*} u(t,x) := \frac{1}{\Gamma(2-\alpha)}\partial_t^2\int_t^T (s-t)^{1-\alpha} u(s,x)ds, \quad (s,x)\in Q.$$
Then, the weak  formulation of  the system (\ref{eqn:forward}) is given by
$$\int_Q\left( \sum_{i,j=1}^d a_{i,j}\partial_{x_i}u\partial_{x_j}w+quw+u\partial_t^{\alpha*}w \right)dxdt 
= \int_Q g\sigma w dxdt$$ 
for any test function $w\in H^\alpha(0,T;L^2(\Omega))\bigcap L^2(0,T;H^1(\Omega))$  satisfying $J_0w=J_1w=0$ in $\Omega$,
where we have set
\begin{eqnarray*}
J_0 u(t,x) &=& \lim_{t\rightarrow T} \frac{1}{\Gamma(2-\alpha)}\int_t^T (s-t)^{1-\alpha} u(s,x)ds,\\
J_1 u(t,x) &=& \lim_{t\rightarrow T} \frac{1}{\Gamma(2-\alpha)}\partial_t\int_t^T (s-t)^{1-\alpha} u(s,x)ds.
\end{eqnarray*}

Assume that we have noise contaminated measurement $u^\delta \in W^{1,1}(0,T;L^2(\Omega))$ in a subregion $\Omega'\subset\Omega$ and over the interval $(0,T)$, such that
$\|u^\delta-u(g_{\text{true}})\|_{L^2(\Omega')}\leq\delta$, where   $g_{\text{true}}$ is the true data, $u(g_{\text{true}})$ is the solution to \eqref{eqn:forward} associated with $f(t,x)=\sigma(t) g_{\text{true}}(x)$, and $\delta$ is the noise level.
Then, the numerical reconstruction of the source term  $g_{\text{true}}$ can be formulated as a least squares problem with Tikhonov regularization:
\begin{equation}\label{eqn:ls}
\min_{g\in L^2(\Omega)}\Phi(g), \quad \Phi(g) := \|u(g)-u^\delta\|^2_{L^2((0,T)\times\Omega')}+\rho\|g\|^2_{L^2(\Omega)}.
\end{equation}
Traditional iterative methods solving the least squares problem (\ref{eqn:ls}) 
require the computation of the Fr\'{e}chet derivative $\Phi'(g)$ of the object function $\Phi(g)$.
For an arbitrary direction $h\in L^2(\Omega)$, $\Phi'(g)h$ is given by
\begin{eqnarray}
\Phi'(g)h &=& 2\int_0^T\int_{\Omega'}(u(g)-u^\delta)(u'(g)h)dxdt+2\rho\int_\Omega gh dx \nonumber \\
          &=& 2\int_0^T\int_{\Omega'}(u(g)-u^\delta)u(h)dxdt+2\rho\int_\Omega gh dx \label{eqn:frechet}
\end{eqnarray}
because of the linear dependence of $u(g)$ on $g$.

Next, we consider the adjoint system
\begin{equation}\label{eqn:adjoint}
\left\{
\begin{array}{ll}
(\partial_t^{\alpha*}+\mathcal{A}_q)z  =  \chi_{\Omega'}(u(g)-u^\delta), & \text{ in $Q$,}\\
         \partial_{\nu_a}z =  0,          & \text{ on $\Sigma$,}\\
   J_0z = J_1z          =  0,          & \text{ in $\Omega$},
\end{array}
\right.
\end{equation}
 where $\chi_{\Omega'}$ is the characteristic function of $\Omega'$. Then, the first term of the equation (\ref{eqn:frechet}) is equal to
\begin{eqnarray*}
\int_0^T\int_{\Omega'}(u(g)-u^\delta)u(h)dxdt &=& \int_Q \chi_{\Omega'}(u(g)-u^\delta) u(h) dxdt\\
& = & \int_Q (\partial_t^{\alpha*}+\mathcal{A}_q)z(t,x)u(h) dxdt\\
& = & \int_Q h\sigma z(g) dxdt,
\end{eqnarray*}
which implies
$$\Phi'(g)h = 2\int_{\Omega}\left(\int_0^T \sigma z(g)dt+\rho g\right)h dx.$$
Therefore,  the minimizer $g$ of \eqref{eqn:ls} verifies
\begin{equation}\label{eqn:optimal}
\rho g = -\int_0^T \sigma z(g) dt.
\end{equation}
Thus, by adding $Mg$ on both sides of (\ref{eqn:optimal}),  we obtain the following iterative reconstruction algorithm, which is borrowed from \cite{daubechies2004iterative,JLLY}:
\begin{eqnarray*}
g_{k+1}& =& \frac{M}{M+\rho}g_k-\frac{1}{M+\rho}\int_0^T\sigma z(g_k)dt,\quad k=0,1,2,\ldots
\end{eqnarray*}
The iteration stops when the condition
$$\|g_{k+1}-g_{k}\|_{L^2(\Omega)}<\varepsilon \|g_k\|_{L^2(\Omega)}$$
 is fulfilled, where $\varepsilon$ is the  {\it a priori} fixed precision parameter.

In order to avoid dealing with the non-local final condition in the third line of (\ref{eqn:adjoint}) and with the backward Riemann-Liouville fractional derivative $\partial_t^{\alpha *}$ appearing in the master equation of (\ref{eqn:adjoint}), whose numerical treatment is a delicate matter, we rather solve the following problem with a backward Caputo
fractional derivative:
\begin{equation}\label{eqn:adjoint_Caputo}
\left\{
\begin{array}{ll}
(\partial_{t\_}^{\alpha}+\mathcal{A}_q)z  =  \chi_{\Omega'}(u(g)-u^\delta), & \text{ in $Q$,}\\
         \partial_{\nu_a}z  =  0,          & \text{ on $\Sigma$,}\\
   z(T,\cdot)=\partial_t z(T,\cdot) =  0,          & \text{ in $\Omega$}.
\end{array}
\right.
\end{equation}
From a theoretical viewpoint, this choice is justified by the:
\begin{lem}
\label{lm-equivalence}
The system  \eqref{eqn:adjoint_Caputo} is equivalent to (\ref{eqn:adjoint}).
\end{lem}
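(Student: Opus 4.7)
The natural strategy is to show that the two systems have the same solution via the pointwise identity
\[
\partial_t^{\alpha*} z(t,x) = \partial_{t\_}^{\alpha} z(t,x) + \frac{(T-t)^{-\alpha}}{\Gamma(1-\alpha)}\, z(T,x) - \frac{(T-t)^{1-\alpha}}{\Gamma(2-\alpha)}\, \partial_t z(T,x),
\]
valid for sufficiently regular $z$, and then to check that the two sets of terminal/non-local boundary data pin down the same solution. A clean route to the identity is the change of variables $\tau = T-t$, $\tilde z(\tau,x) := z(T-\tau,x)$: a direct computation of the substitutions shows that $\partial_{t\_}^{\alpha}z(T-\tau,x) = \partial_\tau^{\alpha}\tilde z(\tau,x)$ (forward Caputo) and $\partial_t^{\alpha*}z(T-\tau,x) = \partial_\tau^{\alpha,\mathrm{RL}}\tilde z(\tau,x)$ (forward Riemann--Liouville). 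Both transformed derivatives are then linked by the classical forward identity relating Caputo and Riemann--Liouville derivatives with initial data $\tilde z(0,\cdot)$ and $\partial_\tau\tilde z(0,\cdot)$, which after undoing the substitution yields the displayed identity.

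Once the identity is in hand, the first implication is immediate: if $z$ solves \eqref{eqn:adjoint_Caputo}, then the terminal conditions $z(T,\cdot) = \partial_t z(T,\cdot) = 0$ make the two explicit boundary terms vanish, so $\partial_t^{\alpha*}z = \partial_{t\_}^{\alpha}z$ and the master equation and Neumann condition of \eqref{eqn:adjoint} are satisfied. The conditions $J_0 z = J_1 z = 0$ then follow by elementary estimates of the form
\[
\Bigl|\int_t^T (s-t)^{1-\alpha}z(s,x)\,ds\Bigr| \leq \|z\|_{L^\infty}\,\frac{(T-t)^{2-\alpha}}{2-\alpha} \longrightarrow 0, \qquad t \to T^-,
\]
together with the analogous computation obtained after the first $\partial_t$ differentiation (where the only surviving boundary term $-(T-t)^{1-\alpha}z(T,x)$ vanishes thanks to $z(T,\cdot)=0$, and the remaining integral is controlled via $\partial_t z(T,\cdot) = 0$).

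For the converse, given a solution of \eqref{eqn:adjoint}, one rewrites the system in the variable $\tau = T-t$ and recognises $J_0 z$ and $J_1 z$ as the traces at $\tau=0^+$ of $I^{2-\alpha}\tilde z$ and $\partial_\tau I^{2-\alpha}\tilde z$; these are precisely the natural Riemann--Liouville initial data for the transformed forward equation. In the appropriate regularity class (e.g.\ the one delivered by the analysis of the direct problem in Section \ref{sec-analytic}, which provides $\tilde z \in \mathcal{C}([0,T];H^{2\zeta}(\Omega))$ with $\zeta$ close to $1$), the vanishing of these two traces forces $\tilde z(0,\cdot) = \partial_\tau\tilde z(0,\cdot) = 0$, which translates back to the terminal conditions $z(T,\cdot)=\partial_t z(T,\cdot)=0$. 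The identity then shows $\partial_{t\_}^{\alpha} z = \partial_t^{\alpha*}z$, so $z$ solves \eqref{eqn:adjoint_Caputo}.

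The main obstacle I anticipate is this last step: the passage from the non-local conditions $J_0 z = J_1 z = 0$ to the pointwise terminal conditions requires enough regularity of $z$ near $t=T$ to make the traces meaningful and to invert the fractional integral $I^{2-\alpha}$ unambiguously. This is handled by placing the problem in the functional setting used throughout Sections \ref{sec-analytic}--\ref{sec-DO}, where the right-hand side $\chi_{\Omega'}(u(g)-u^\delta)$ (lying in $L^2$ or $W^{1,1}$ in time with values in $L^2(\Omega)$) yields, via the time-reversal and the representation formula, a solution smooth enough in time for the traces and the integration-by-parts computations above to be justified.
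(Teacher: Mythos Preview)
Your forward direction (solutions of \eqref{eqn:adjoint_Caputo} solve \eqref{eqn:adjoint}) is essentially the paper's argument, though the paper obtains the needed time regularity $z\in W^{2,r}(0,T;L^2(\Omega))\hookrightarrow\cC^1([0,T];L^2(\Omega))$ by the time-reversal $w(t,\cdot)=z(T-t,\cdot)$ together with \cite[Theorem 2.7]{KY3}, not via the Laplace-transform machinery of Section~\ref{sec-analytic}; you should cite the correct regularity input.

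The converse direction contains a genuine gap. For any $\tilde z\in\cC^1([0,T];L^2(\Omega))$ one has automatically $I^{2-\alpha}\tilde z(0^+)=0$, while the computation
\[
\partial_\tau I^{2-\alpha}\tilde z(\tau)=\frac{\tau^{1-\alpha}}{\Gamma(2-\alpha)}\,\tilde z(0)+I^{2-\alpha}(\partial_\tau\tilde z)(\tau)
\]
shows that $J_1 z=0$ forces only $\tilde z(0,\cdot)=0$; the remaining term $I^{2-\alpha}(\partial_\tau\tilde z)(\tau)\to 0$ regardless of the value of $\partial_\tau\tilde z(0,\cdot)$. Thus the pair $J_0 z=J_1 z=0$ does \emph{not} by itself yield $\partial_\tau\tilde z(0,\cdot)=0$, i.e.\ $\partial_t z(T,\cdot)=0$, in the regularity class you invoke. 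No amount of extra smoothness of $\tilde z$ fixes this: the two non-local conditions encode at most one pointwise trace. To recover the second terminal condition you would have to use the PDE itself.

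The paper avoids this difficulty entirely: after proving that the (unique) solution of \eqref{eqn:adjoint_Caputo} solves \eqref{eqn:adjoint}, it simply invokes uniqueness for \eqref{eqn:adjoint} to conclude that the two systems share the same solution, hence are equivalent. This one-line observation replaces your whole converse argument and is what you should use.
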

\begin{proof}
Let us first prove that the weak solution to \eqref{eqn:adjoint_Caputo} solves \eqref{eqn:adjoint}.
We start by showing that the homogeneous final condition $z(T,\cdot)=\partial_t z(T,\cdot) =  0$ of (\ref{eqn:adjoint_Caputo}),
implies $J_0 z = J_1 z = 0$ in $\Omega$. To this purpose, we preliminarily establish that the weak solution $z$ to \eqref{eqn:adjoint_Caputo}, lies in $C^1([0,T];L^2(\Omega))$. Indeed, putting $w(t,\cdot)=z(T-t,\cdot)$, we get that
$$\partial_{t\_}^{\alpha}z(T-t,\cdot)=\partial_t^\alpha w(t,\cdot),\quad t\in(0,T), $$
and hence
$$
\left\{
\begin{array}{ll}
(\partial_{t}^{\alpha}+\mathcal{A}_q)w(t,\cdot)  =  \chi_{\Omega'}(u(g)-u^\delta)(T-t,\cdot), & \text{ in $Q$,}\\
         \partial_{\nu_a}w(t,x)  =  0,          & \text{ on $\Sigma$,}\\
   w(0,x)=\partial_t w(0,x) =  0,          & \text{ in $\Omega$}.
\end{array}
\right.
$$
Notice that $\chi_{\Omega'}(u(g)-u^\delta)\in W^{1,1}(0,T;L^2(\Omega))$ by \cite[Theorem 1.1]{KY}, whence $w \in W^{2,r}(0,T;L^2(\Omega))$ for all $r\in(1,(2-\alpha)^{-1})$, from \cite[Theorem 2.7.]{KY3}. Therefore, we have
$z \in W^{2,r}(0,T;L^2(\Omega))$ and hence $z\in \mathcal C^1([0,T];L^2(\Omega))$ by the Sobolev embedding theorem. As a consequence we have $z\in L^\infty(0,T;L^2(\Omega))$, so we get for all $t\in(0,T)$ that
\begin{eqnarray*}
\left\|\frac{1}{\Gamma(2-\alpha)}\int_t^T(s-t)^{1-\alpha}z(s,\cdot)ds\right\|_{L^2(\Omega)}&\leq &\frac{\|z\|_{L^\infty(0,T;L^2(\Omega))}}{\Gamma(2-\alpha)}\int_t^T(s-t)^{1-\alpha}ds\\
&\leq &\frac{\|z\|_{L^\infty(0,T;L^2(\Omega))}}{\Gamma(2-\alpha)}\int_0^{T-t}s^{1-\alpha}ds.
\end{eqnarray*}
From this it then follows that
\begin{eqnarray}
|J_0z\|_{L^2(\Omega)} & \leq & \limsup_{t\rightarrow T}\left\|\frac{1}{\Gamma(2-\alpha)}\int_t^T(s-t)^{1-\alpha}z(s,\cdot)ds\right\|_{L^2(\Omega)} \nonumber \\
&\leq &\frac{\|z\|_{L^\infty(0,T;L^2(\Omega))}}{\Gamma(2-\alpha)} \lim_{t\rightarrow T}\int_0^{T-t}s^{1-\alpha}ds=0. \label{jzz}
\end{eqnarray}
Similarly, $z\in \mathcal C^1([0,T];L^2(\Omega))$ and $z(T,\cdot)=0$ yield
\begin{eqnarray}
\partial_t\int_t^T (s-t)^{1-\alpha}z(s,\cdot)ds&=& \partial_t\left(\int_0^{T-t} s^{1-\alpha}z(s+t,\cdot)ds\right) \nonumber \\
&=&\int_0^{T-t}  s^{1-\alpha}\partial_sz(t+s,\cdot)ds+z(T,\cdot)(T-t)^{1-\alpha} \nonumber \\
&=& \int_t^T (s-t)^{1-\alpha}\partial_sz(s,\cdot)ds, \label{eqn:equiv11}
\end{eqnarray}
which entails
$$J_1z=  \frac{1}{\Gamma(2-\alpha)}\lim_{t\rightarrow T}\int_t^T (s-t)^{1-\alpha}\partial_sz(s,\cdot)ds,$$
so we get $J_1z=0$ by arguing as in the derivation of \eqref{jzz}.

It remains to prove that $z$ satisfies the first line of \eqref{eqn:adjoint}. Since $z$ is a solution to \eqref{eqn:adjoint_Caputo}, this amounts to showing that $\partial_t^{\alpha*}z=\partial_{t\_}^{\alpha}z$.
We proceed in the same way as in the derivation of \eqref{eqn:equiv11}, that is we take into account that $z(T,\cdot)=\partial_tz(T,\cdot)=0$ and $z\in W^{2,r}(0,T;L^2(\Omega))$ for all $r\in(1,(2-\alpha)^{-1})$, to write
\begin{eqnarray*}
\partial_t^{\alpha*}z&=&\partial_t^2\int_t^T (s-t)^{1-\alpha}z(s,\cdot)ds\\
&=&\partial_t^2\left(\int_0^{T-t} s^{1-\alpha}z(s+t,\cdot)ds\right)\\
&=&\partial_t\left(\int_0^{T-t} s^{1-\alpha}\partial_sz(s+t,\cdot)ds+z(T,\cdot)(T-t)^{1-\alpha}\right)\\
&=&\int_0^{T-t}  s^{1-\alpha}\partial_s^2z(t+s,\cdot)ds+\partial_tz(T,\cdot)(T-t)^{1-\alpha}\\
&=&\int_t^T (s-t)^{1-\alpha}\partial_s^2z(s,\cdot)ds=\partial_{t\_}^{\alpha}z.
\end{eqnarray*}

Summing up, we have established that weak solution to \eqref{eqn:adjoint_Caputo} is a solution to \eqref{eqn:adjoint}. But, since the solution to \eqref{eqn:adjoint} is unique, this means that the two systems \eqref{eqn:adjoint} and \eqref{eqn:adjoint_Caputo} are equivalent. 
\end{proof}

\begin{rmk} It is easy to see that the statement Lemma \ref{lm-equivalence} remains valid for $\alpha\in(0,1)$ and $\sigma\in W^{1,1}(0,T)$. 
\end{rmk}


\subsection{Numerical computations}
This section provides several  numerical computations of  $g_{\text{true}}$ by the iterative scheme introduced in Section \ref{sec-IM}, in the particular case where
$$\Omega = (0,1)\times(0,1),\quad T=1,\quad \mathcal{A}_q u=-0.1\Delta u+u,$$
and
\begin{eqnarray*}
\sigma(t)&=&\frac{1}{\sqrt{2\pi}s}e^{-\frac{(t-0.4)^2}{2s^2}},\ s = 0.12,\quad g_0(x_1,x_2) = 2.
\end{eqnarray*}
Notice that $\sigma \approx 0$ near the boundary. From numerical point of view, 
$\sigma$ is compactly supported.
To obtain the noisy observation $u^\delta$, we solve the forward problem numerically and 
add uniformly distributed random noise to the solution, i.e.,
$$u^\delta(t,x) = \big(1+\delta \text{rand}(-1,1)\big)u(g_{\text{true}})(t,x).$$
Here $\text{rand}(-1,1)$ is a uniformly distributed number in $[-1,1]$ and $\delta$ is the noise level.
For parameters in the iterative method, we fix $\rho=10^{-5}$ and $M=4$.
To evaluate the performance of the reconstruction, we compute the relative error $Res:= {\|g_k-g_{\text{true}}\|_{L^2(\Omega)}}/{\|g_{\text{true}}\|_{L^2(\Omega)}}.$

\begin{example}
In this example we compare reconstructed results for $g(x_1,x_2)$ with different $\alpha$.
We choose the noise level $\delta = 2\%$, the stopping criterion $\varepsilon=\delta/50$ 
and the observation subregion $\Omega' = \overbar{\Omega}\setminus(0.1,0.9)^2$.
We choose two pairs of fractional orders $\alpha=1.2$ and $\alpha = 1.8$, and two true source terms
$$g_{\text{true}}(x_1,x_2) = x_1+x_2+1\ \text{ and }\ g_{\text{true}}(x_1,x_2) = \cos(\pi x_1)\cos(\pi x_2)+2.$$

Figure \ref{fig:alpha} demonstrates results with iteration steps $K$ and relative errors $Res$.
Reconstruction with $\alpha=1.2$ takes fewer steps and the result is more accurate than $\alpha=1.8$.
\begin{figure}[htb]
\centering
\begin{subfigure}[p]{.32\textwidth}
\includegraphics[width=.2\textheight]{./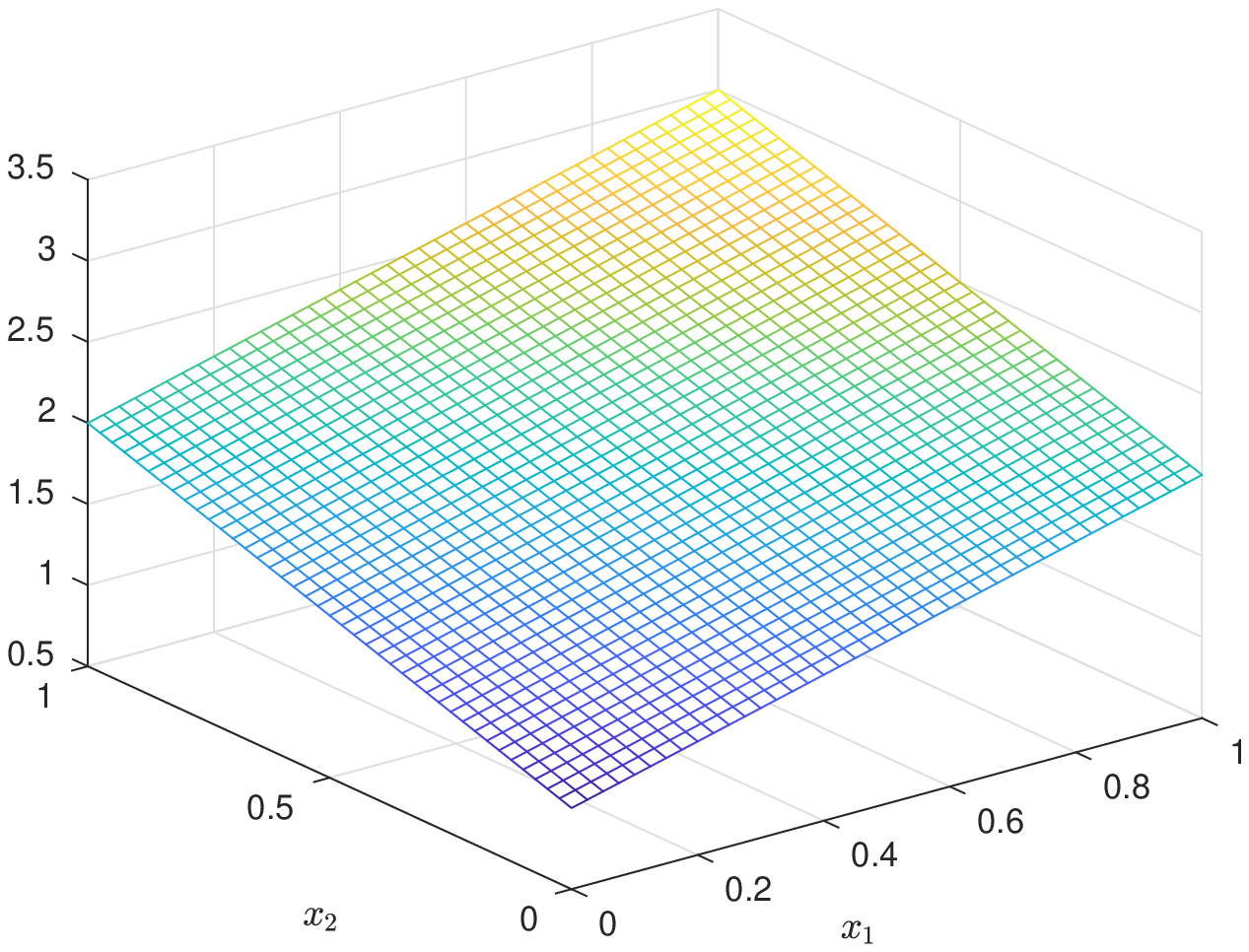}
\caption{$g_{\text{true}}=x_1+x_2+1$}
\end{subfigure}\quad
\begin{subfigure}[p]{.30\textwidth}
\includegraphics[width=.2\textheight]{./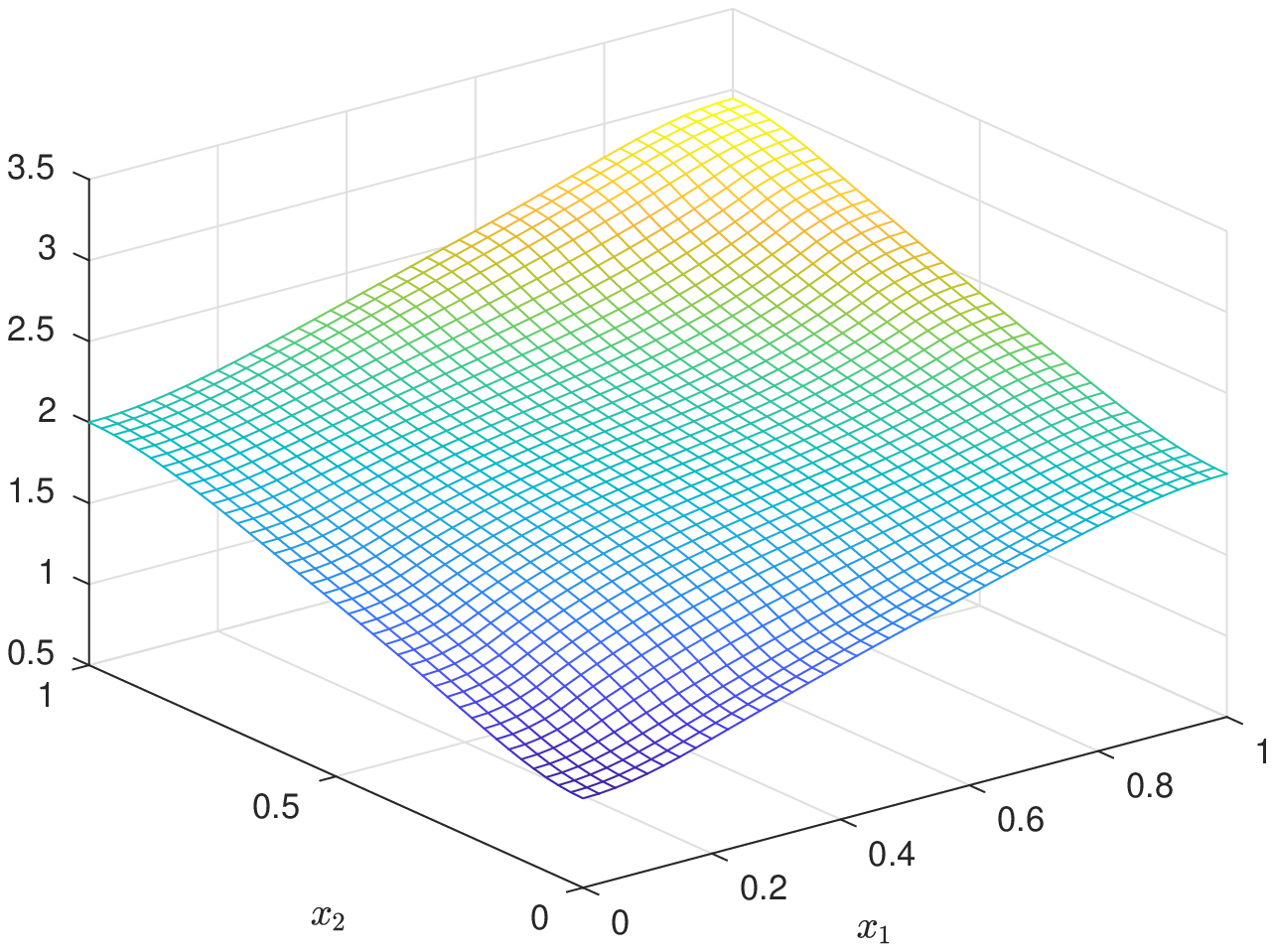}
\caption{$\alpha=1.2$}
\label{fig:inv_plane_1.2}
\end{subfigure}\quad
\begin{subfigure}[p]{.30\textwidth}
\includegraphics[width=.2\textheight]{./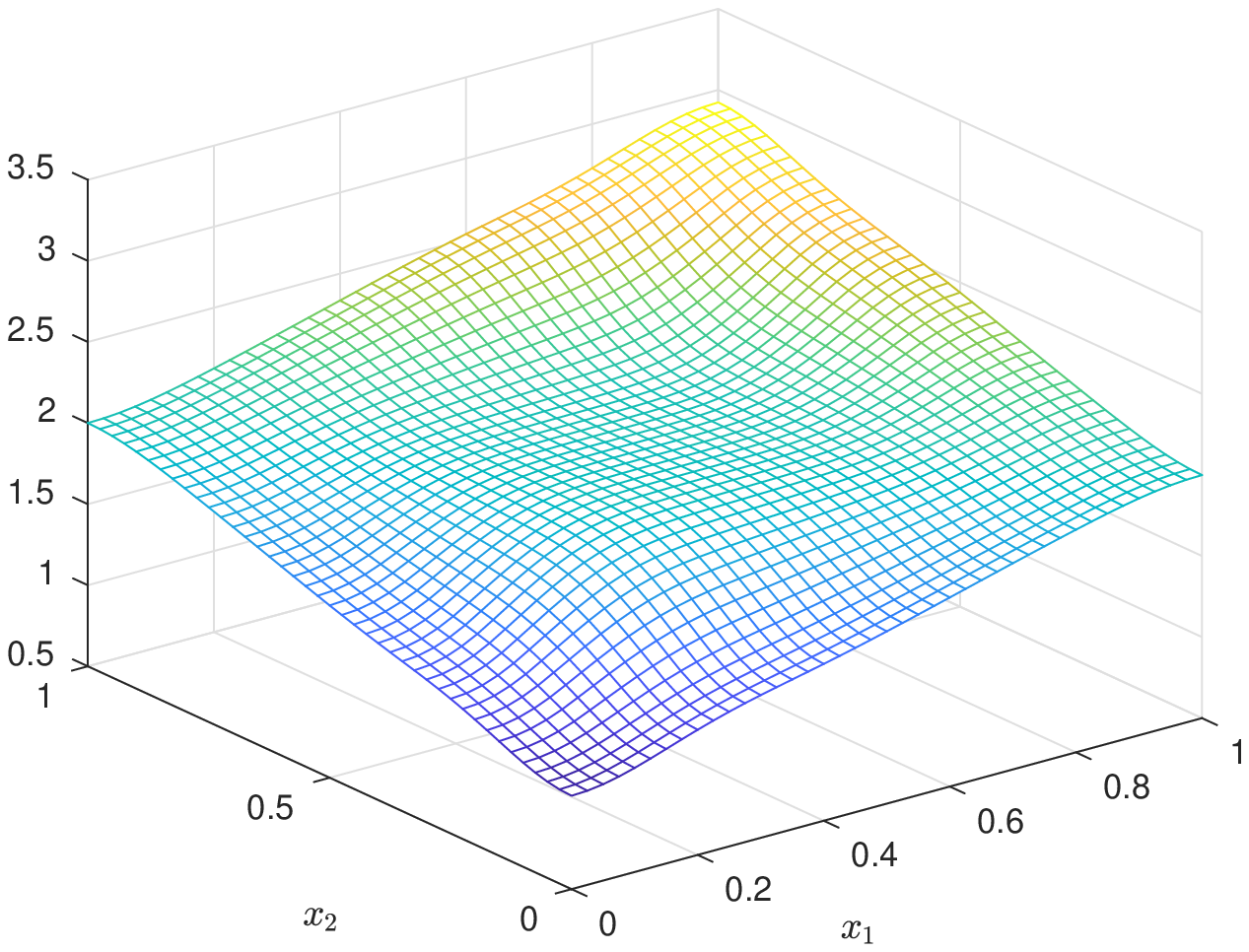}
\caption{$\alpha=1.8$}
\label{fig:inv_plane_1.8}
\end{subfigure}

\begin{subfigure}[p]{.32\textwidth}
\includegraphics[width=.2\textheight]{./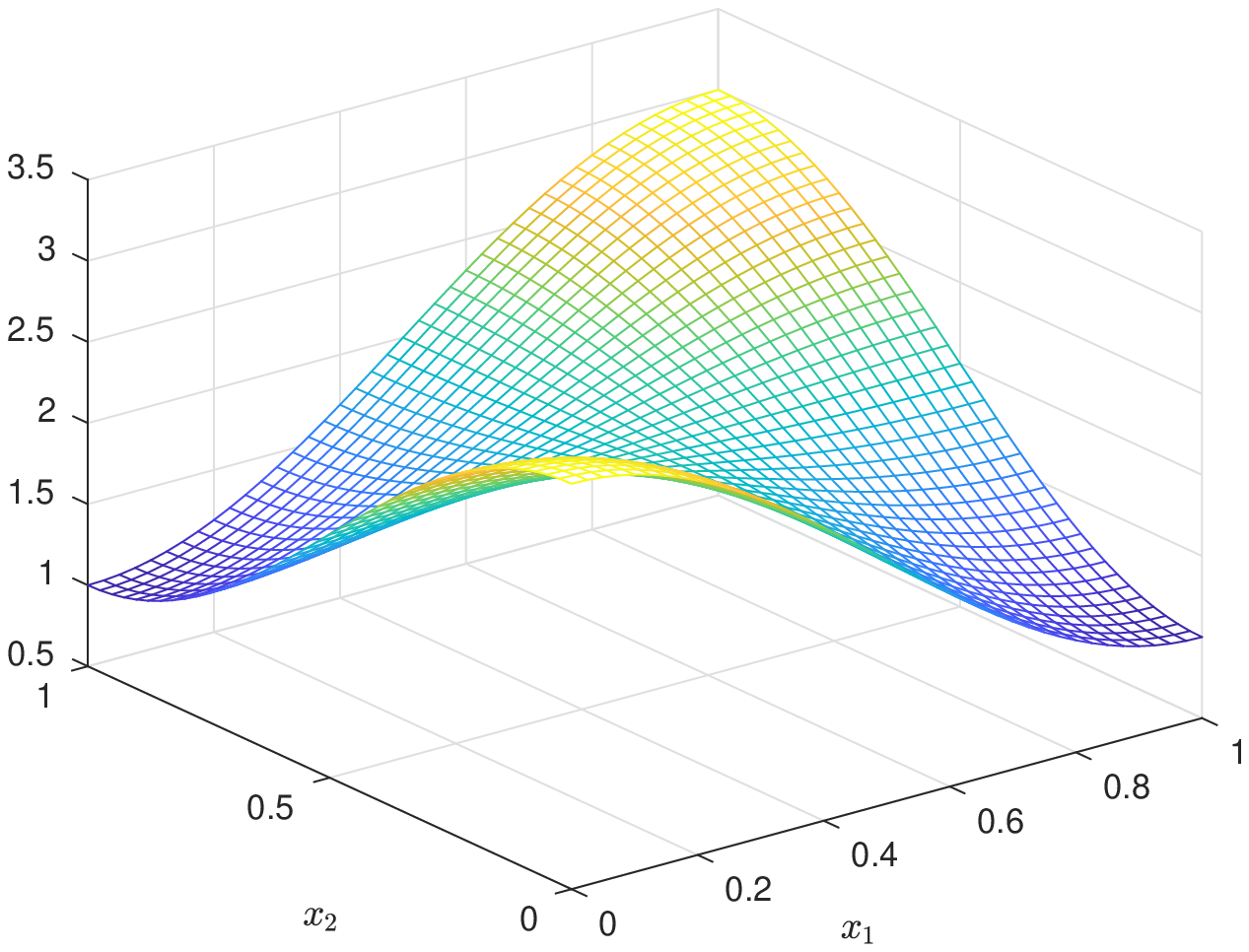}
\caption{$g_{\text{true}}=\cos(\pi x_1)\cos(\pi x_2)+2$}
\end{subfigure}\quad
\begin{subfigure}[p]{.30\textwidth}
\includegraphics[width=.2\textheight]{./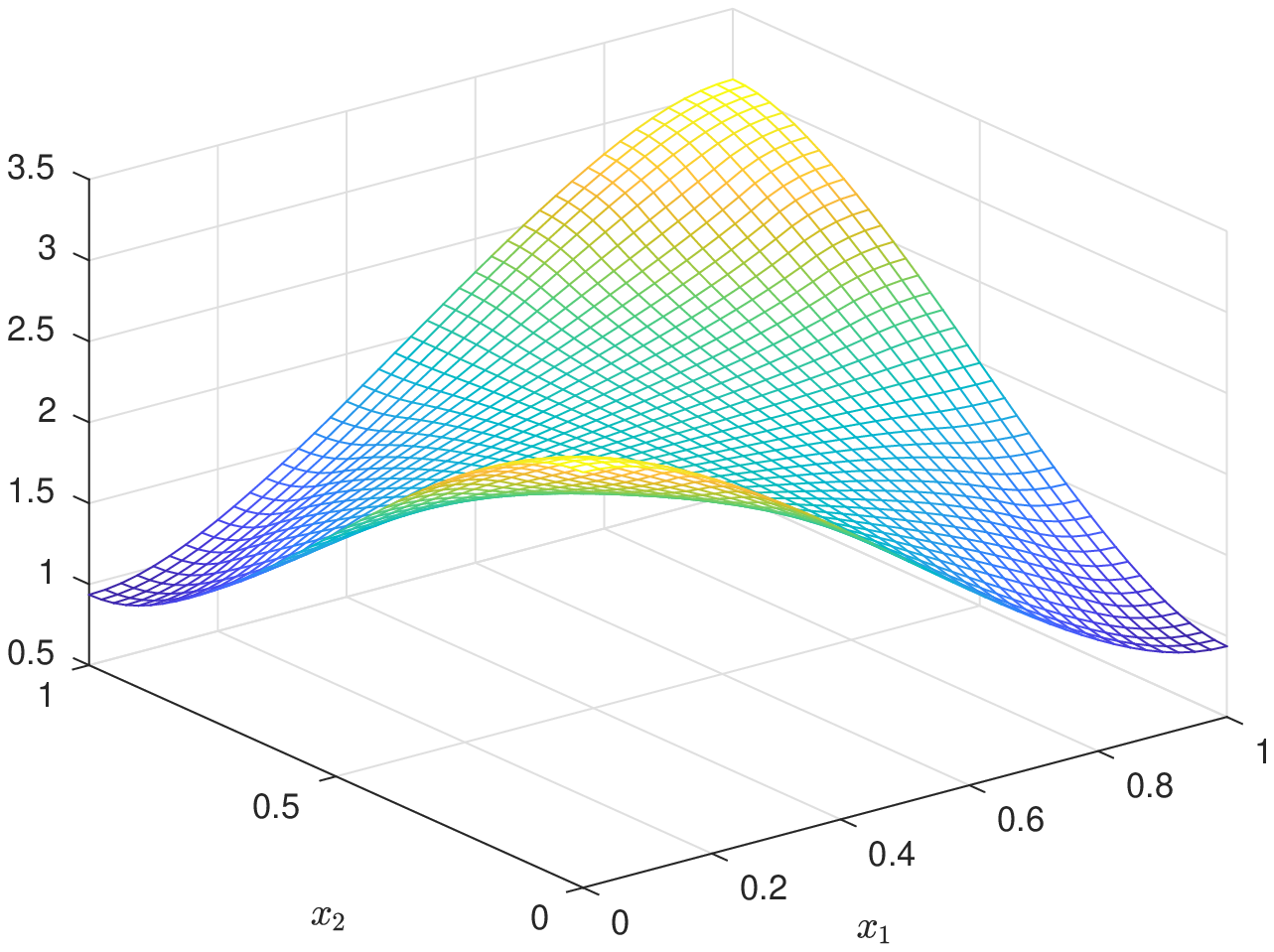}
\caption{$\alpha=1.2$}
\label{fig:inv_saddle_1.2}
\end{subfigure}\quad
\begin{subfigure}[p]{.30\textwidth}
\includegraphics[width=.2\textheight]{./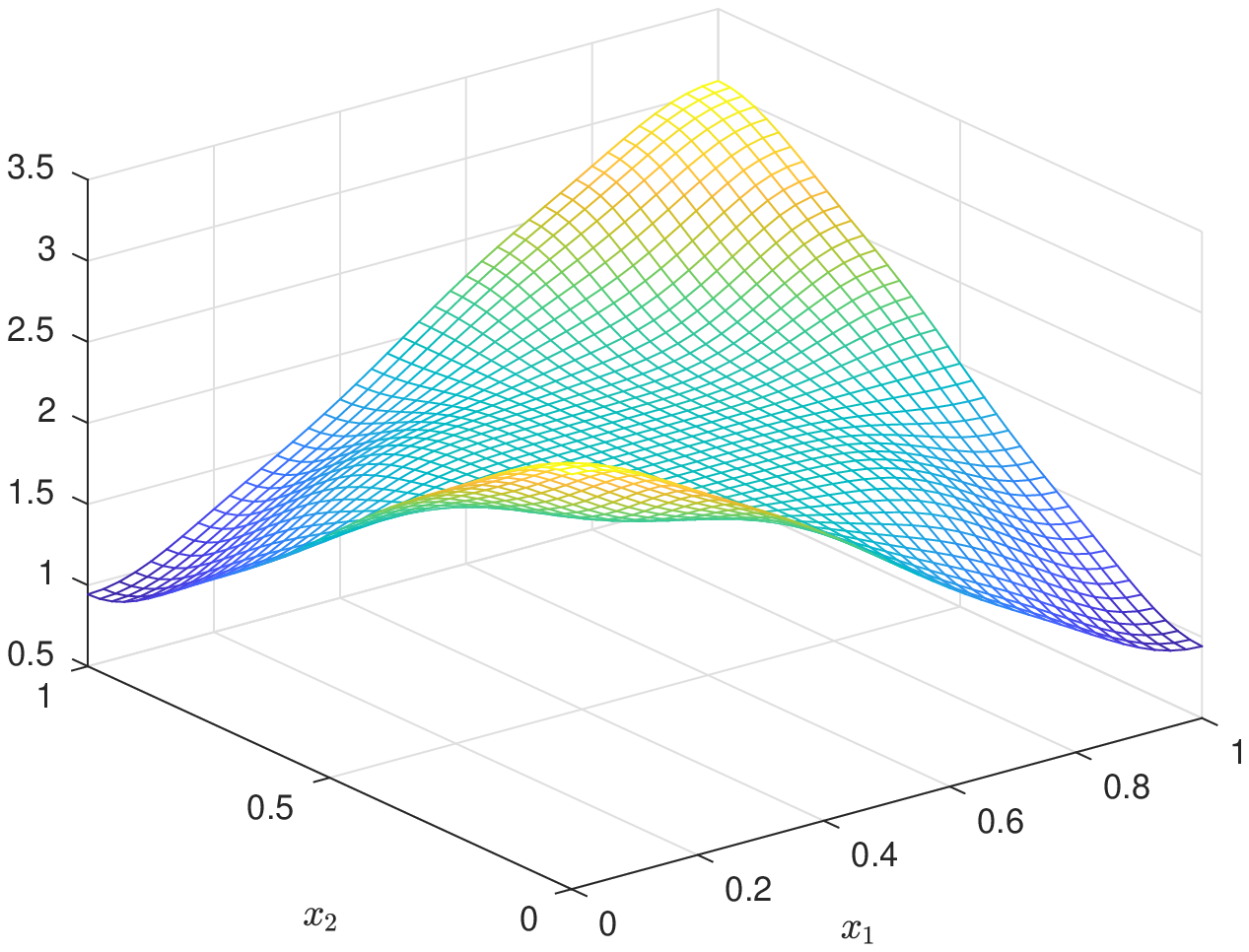}
\caption{$\alpha=1.8$}
\label{fig:inv_saddle_1.8}
\end{subfigure}
\caption{True solutions (left), reconstructions for $\alpha=1.2$ (middle) and reconstructions for $\alpha=1.8$ (right).
Figure (\ref{fig:inv_plane_1.2})  corresponds to $K=91,  Res=2.71\%$; 
Figure (\ref{fig:inv_plane_1.8})  corresponds to $K=139, Res=5.77\%$;
Figure (\ref{fig:inv_saddle_1.2}) corresponds to $K=113, Res=3.65\%$;
Figure (\ref{fig:inv_saddle_1.8}) corresponds to $K=166, Res=7.07\%$.}
\label{fig:alpha}
\end{figure}
\end{example}

\begin{example}
In this example we fix $\alpha=1.5$, $\delta = 10\%$, $\varepsilon=10^{-3}$ and
$$g_{\text{true}} = \cos(\pi x_1)\cos(\pi x_2)+2.$$
We study the effect of observation regions to reconstructed $g(x_1,x_2)$ by
choosing six different observation regions:
$$\begin{array}{lll}
\Omega' = \overbar{\Omega}\setminus(0.2,0.8)^2,&\quad& \Omega' = \overbar{\Omega}\setminus(0.05,0.95)^2,\\
\Omega' = \overbar{\Omega}\setminus[0,0.8)^2,&\quad&   \Omega' = \overbar{\Omega}\setminus[0,0.95)^2,\\
\Omega' = \overbar{\Omega}\setminus[0,1]\times[0,0.8),&\quad& \Omega' = \overbar{\Omega}\setminus[0,1]\times[0,0.95).
\end{array}$$

Figure (\ref{fig:inv_region}) shows reconstructed results with different observation regions and
Table (\ref{tab:inv_region}) lists the number of steps and relative errors.
With the increasing of the observation region, the reconstructed result becomes more accurate.
If we are lack of observation near some boundaries, it is hard to obtain a good reconstruction near those boundaries.
\begin{figure}[htb]
\centering
\begin{subfigure}[p]{.30\textwidth}
\includegraphics[width=.2\textheight]{./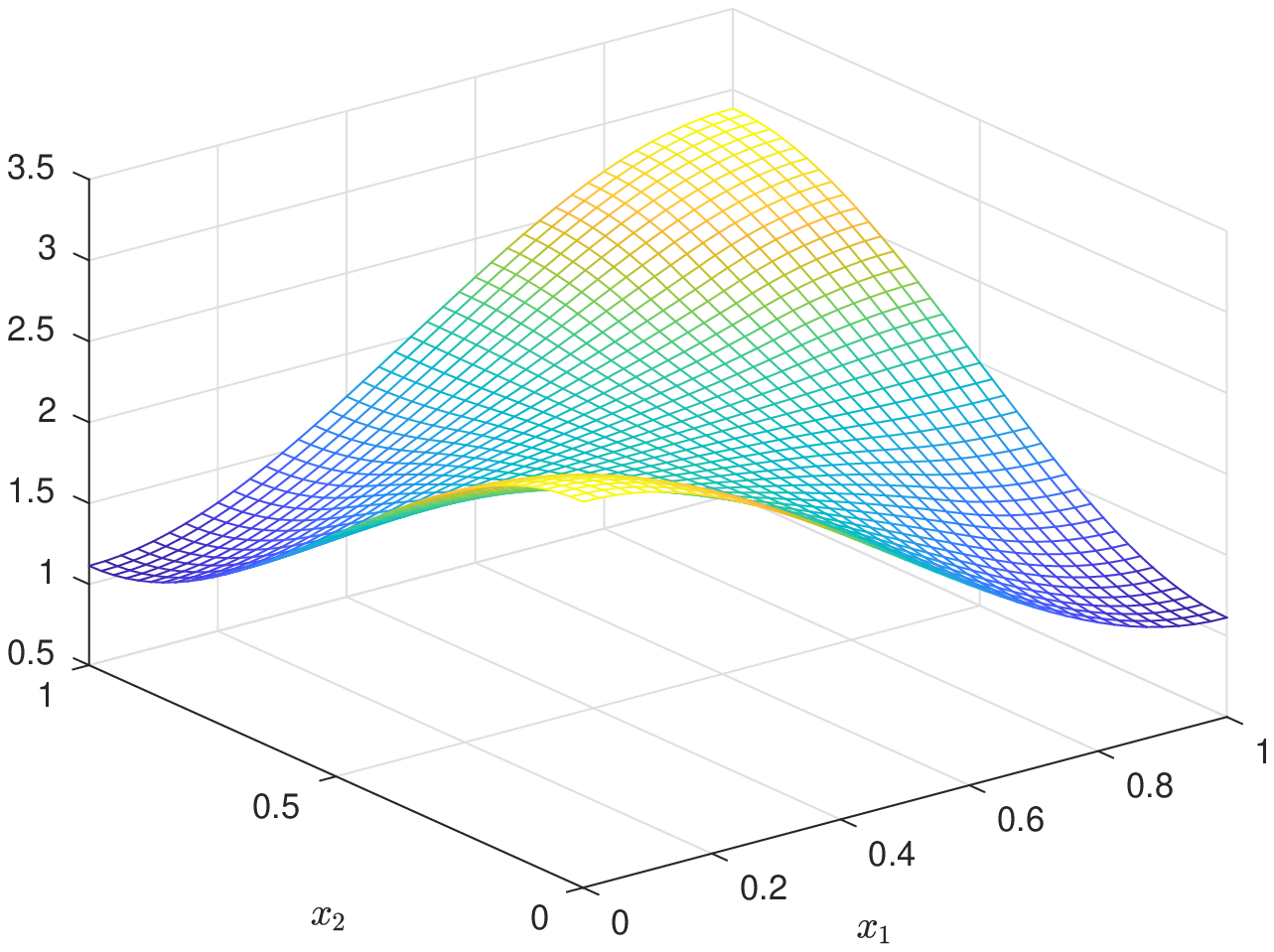}
\caption{$\Omega' = \overbar{\Omega}\setminus(0.2,0.8)^2$}
\end{subfigure}\quad
\begin{subfigure}[p]{.30\textwidth}
\includegraphics[width=.2\textheight]{./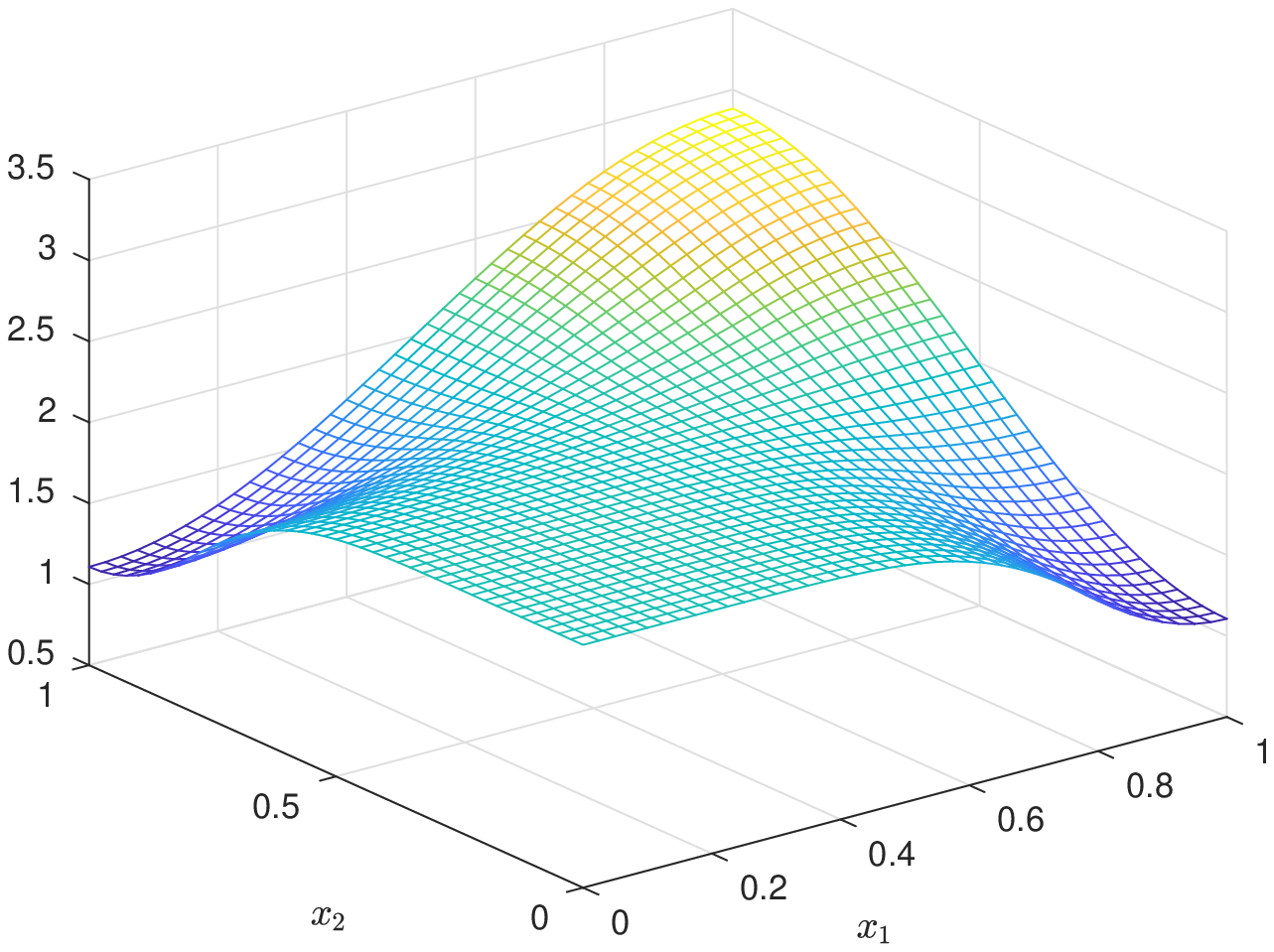}
\caption{$\Omega' = \overbar{\Omega}\setminus[0,0.8)^2$}
\end{subfigure}\quad
\begin{subfigure}[p]{.30\textwidth}
\includegraphics[width=.2\textheight]{./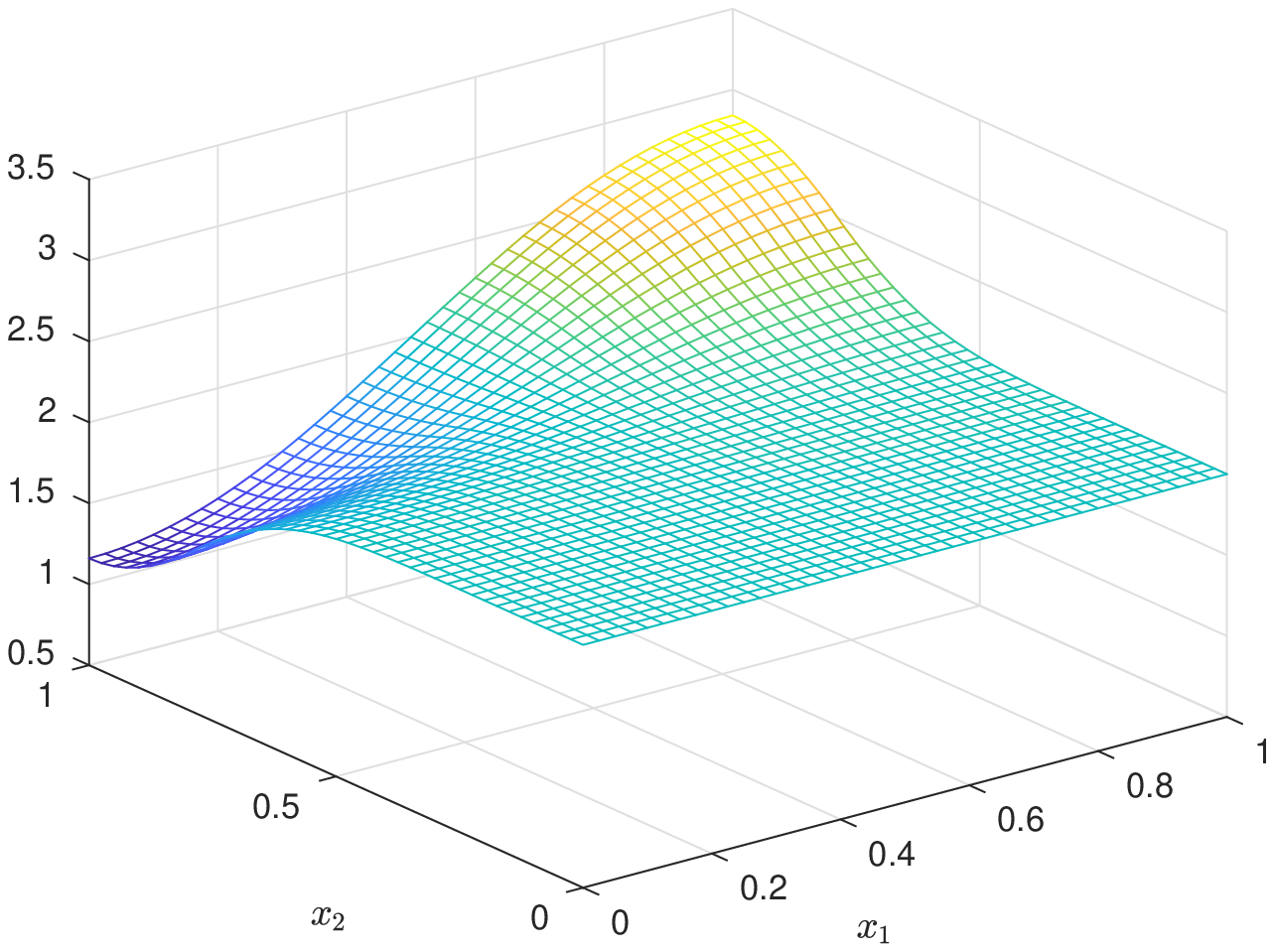}
\caption{$\Omega' = \overbar{\Omega}\setminus[0,1]\times[0,0.8)$}
\end{subfigure}

\begin{subfigure}[p]{.30\textwidth}
\includegraphics[width=.2\textheight]{./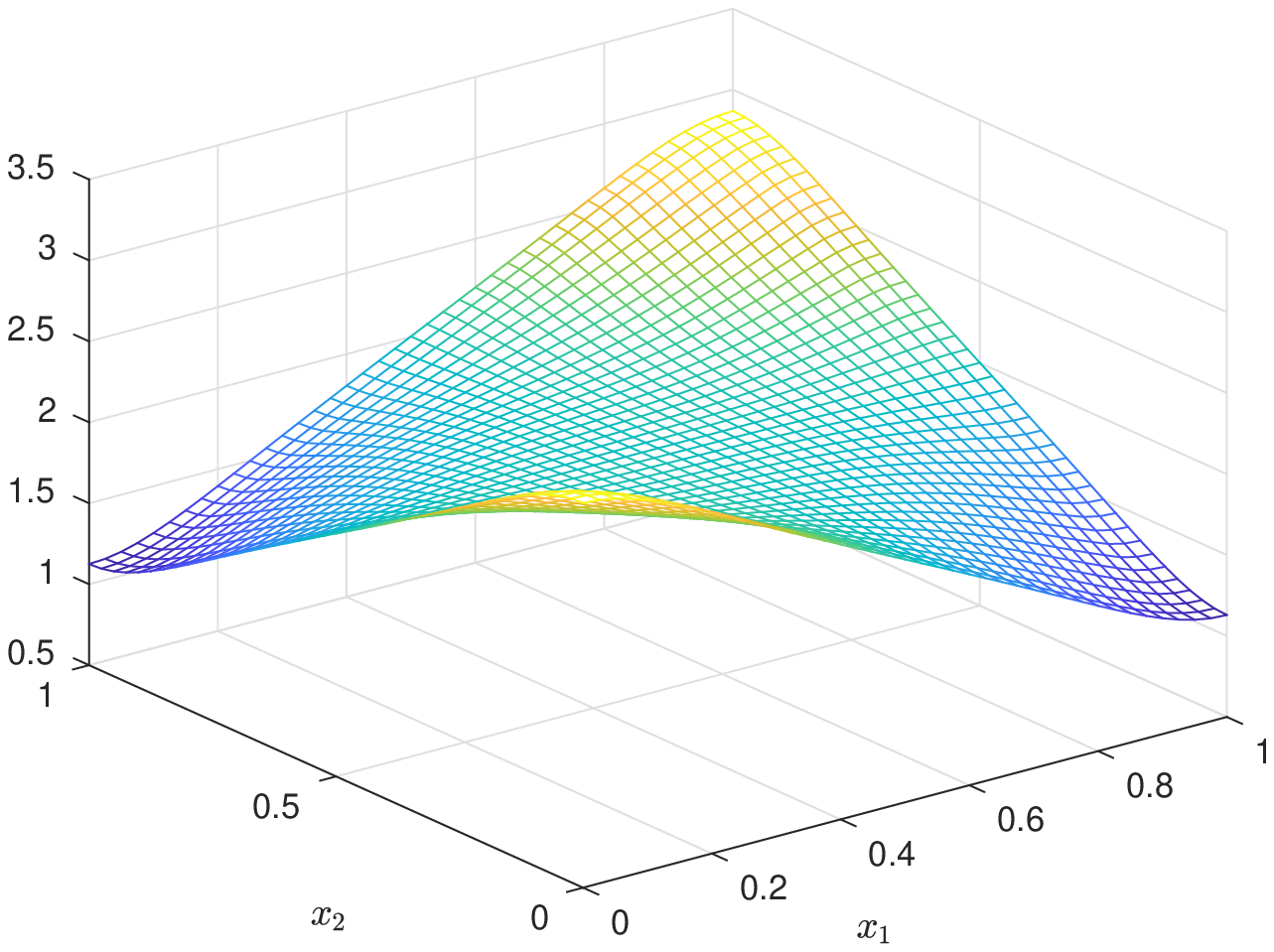}
\caption{$\Omega' = \overbar{\Omega}\setminus(0.05,0.95)^2$}
\end{subfigure}\quad
\begin{subfigure}[p]{.30\textwidth}
\includegraphics[width=.2\textheight]{./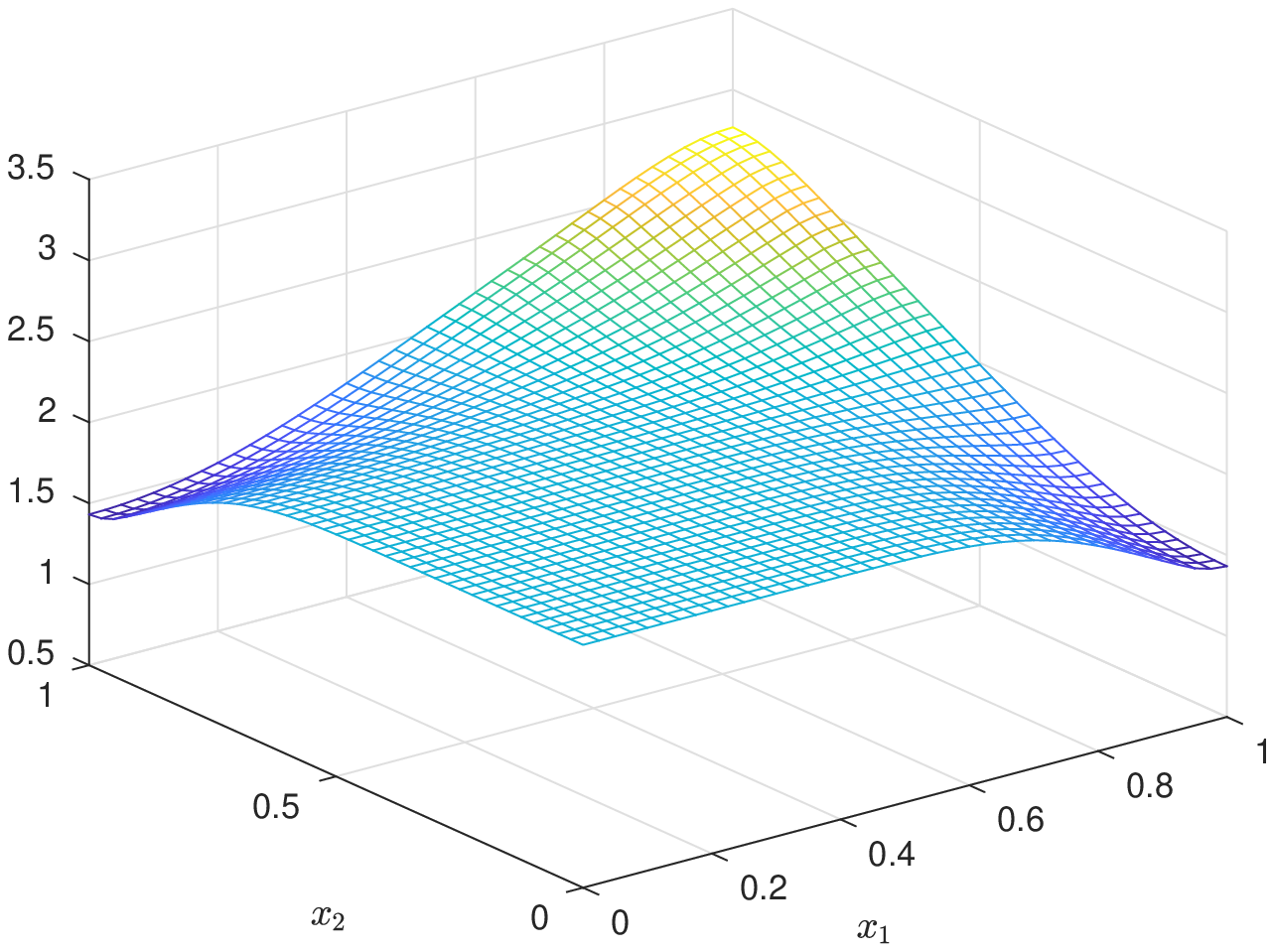}
\caption{$\Omega' = \overbar{\Omega}\setminus[0,0.95)^2$}
\end{subfigure}\quad
\begin{subfigure}[p]{.30\textwidth}
\includegraphics[width=.2\textheight]{./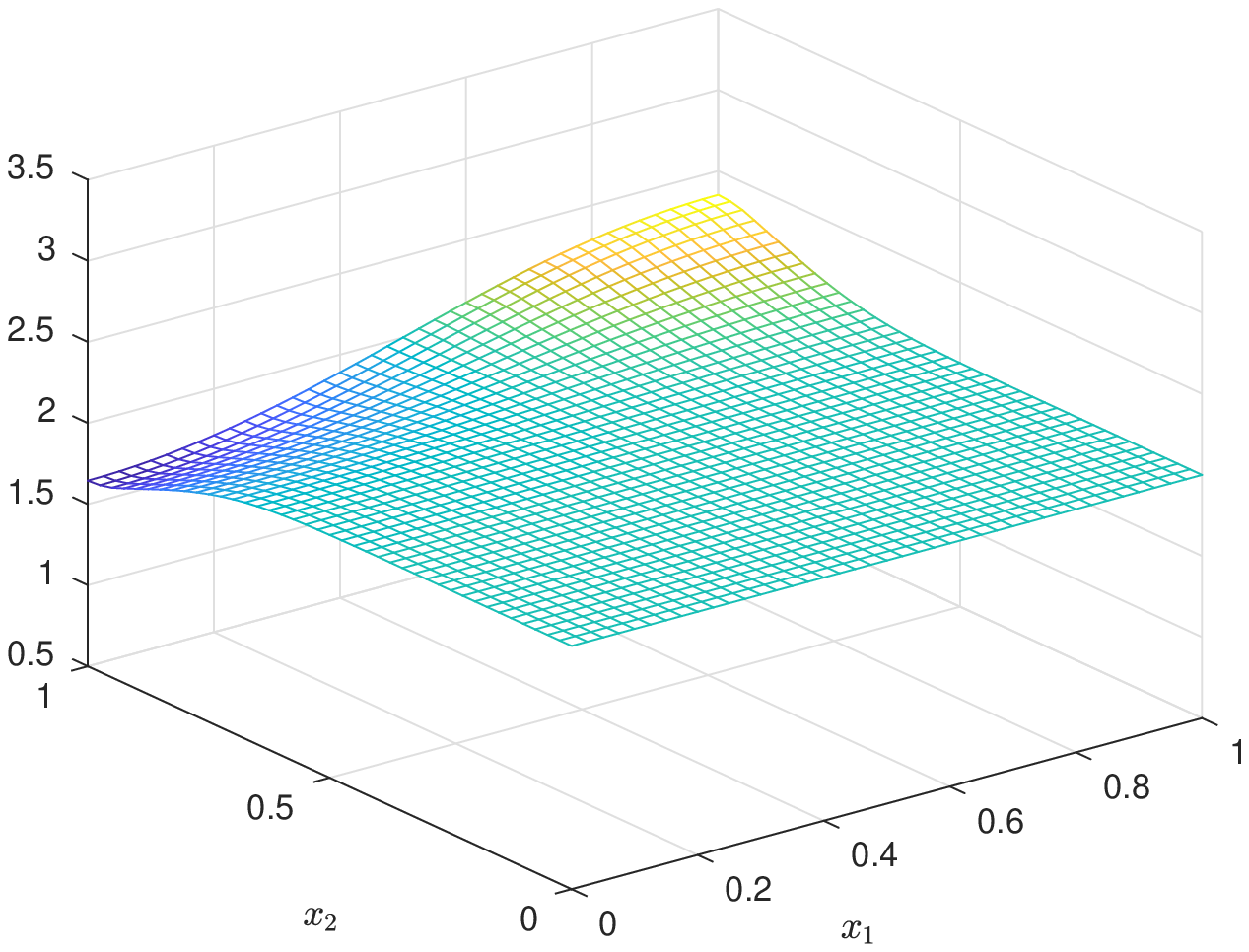}
\caption{$\Omega' = \overbar{\Omega}\setminus[0,1]\times[0,0.95)$}
\end{subfigure}
\caption{Effect of observation regions to reconstructed results. }
\label{fig:inv_region}
\end{figure}
\begin{table}
\centering
\begin{tabular}{lcc} 
	\toprule
	\hline 
	$\Omega'$ & $K$ & $Res$ \\ \hline
	$\overbar{\Omega}\setminus(0.2,0.8)^2$         & $73$ & $3.95\%$ \\
	$\overbar{\Omega}\setminus(0.05,0.95)^2$       & $92$ & $9.09\%$ \\ \midrule
	$\overbar{\Omega}\setminus[0,0.8)^2$           & $72$ & $13.54\%$\\
	$\overbar{\Omega}\setminus[0,0.95)^2$          & $73$ & $17.49\%$\\ \midrule
	$\overbar{\Omega}\setminus[0,1]\times[0,0.8)$  & $63$ & $18.42\%$\\
	$\overbar{\Omega}\setminus[0,1]\times[0,0.95)$ & $40$ & $22.09\%$\\
	\hline
	\bottomrule
\end{tabular}
\caption{Number of steps and relative errors for different regions of observation.}
\label{tab:inv_region}
\end{table}
\end{example}

\begin{example}
\label{exa:extreme}

In this example, we study two extreme cases, $\alpha=1.01$ and $\alpha=1.99$, 
and compare them with $\alpha=0.99$ \cite{JLLY} and $\alpha=2$ \cite{JLY}, respectively.
We choose
$$\delta = 4\%,\quad g_{\text{true}} = \cos(\pi x_1)\cos(\pi x_2)+2,$$
$\varepsilon=\tfrac{\delta}{200}$ for $\alpha\approx 1$ and 
$\varepsilon=\tfrac{\delta}{1500}$ for $\alpha\approx 2$.

We obtain $Res=4.14\%, K=162$ for $\alpha=0.99$ and $Res=3.50\%, K=143$ for $\alpha=1.01$.
Although we choose a smaller threshold $\varepsilon$, 
we still have $Res=5.93\%, K=601$ for $\alpha=1.99$ and $Res=6.81\%, K=531$ for $\alpha=2$.
We can see that in these two extreme cases, $\alpha = 1.01$ and $\alpha = 1.99$, our results are compatible
with cases $\alpha=0.99$ and $\alpha=2$, respectively.

We have less accurate results for $\alpha\approx 2$
than for $\alpha\approx 1$ even if we use a much smaller stopping threshold, which can also
been seen from Figure \ref{fig:inv_alpha}. This is because of the small stopping time and the finite wave speed,
i.e., our observation stops before the full source propagating onto the boundary.
By augmenting $T$ from 1 to 4, the result can be improved dramatically as demonstrated in the last line of Figure \ref{fig:inv_alpha}
with $Res=0.10\%, K=73$ for $\alpha=1.99$ and $Res=0.20\%, K=70$ for $\alpha=2$.
It seems that the wave propagation seems to filter out the random noise, 
which results in a surprisingly small error compared to the noise level.

\begin{figure}[htb]
\centering
\begin{subfigure}[p]{.30\textwidth}
\includegraphics[width=.2\textheight]{./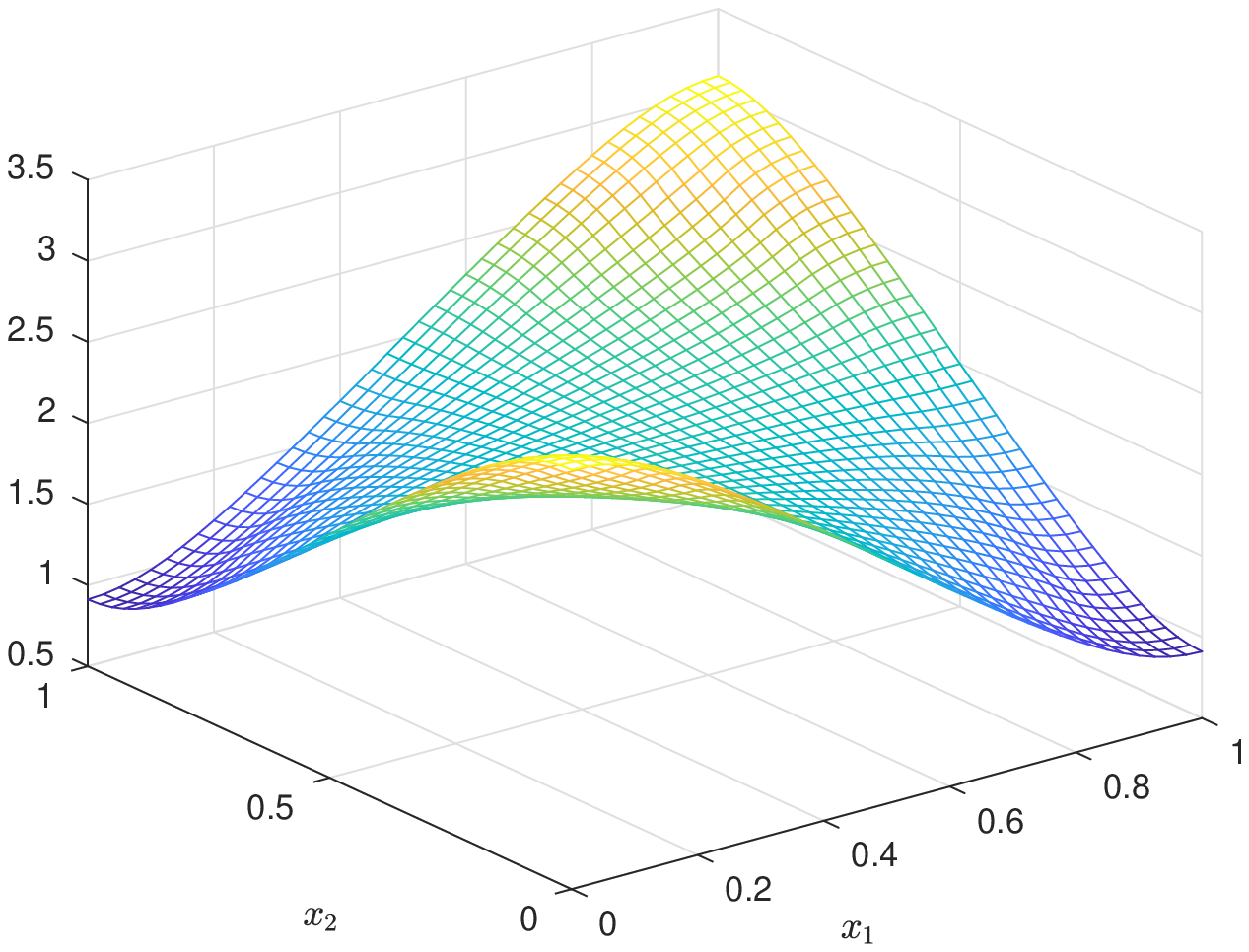}
\caption{$\alpha=0.99, T=1$}
\end{subfigure}\quad
\begin{subfigure}[p]{.30\textwidth}
\includegraphics[width=.2\textheight]{./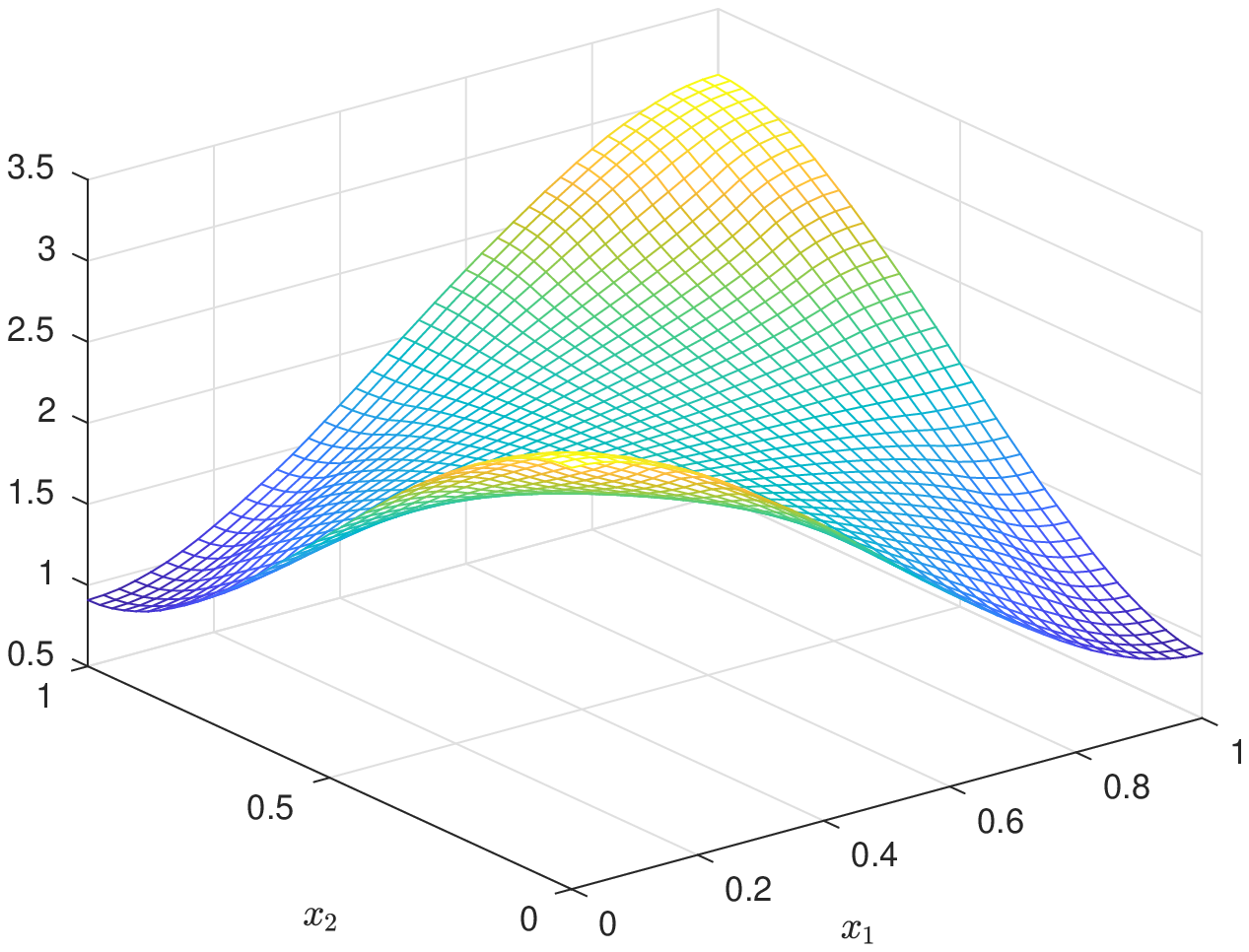}
\caption{$\alpha=1.01, T=1$}
\end{subfigure}

\begin{subfigure}[p]{.30\textwidth}
\includegraphics[width=.2\textheight]{./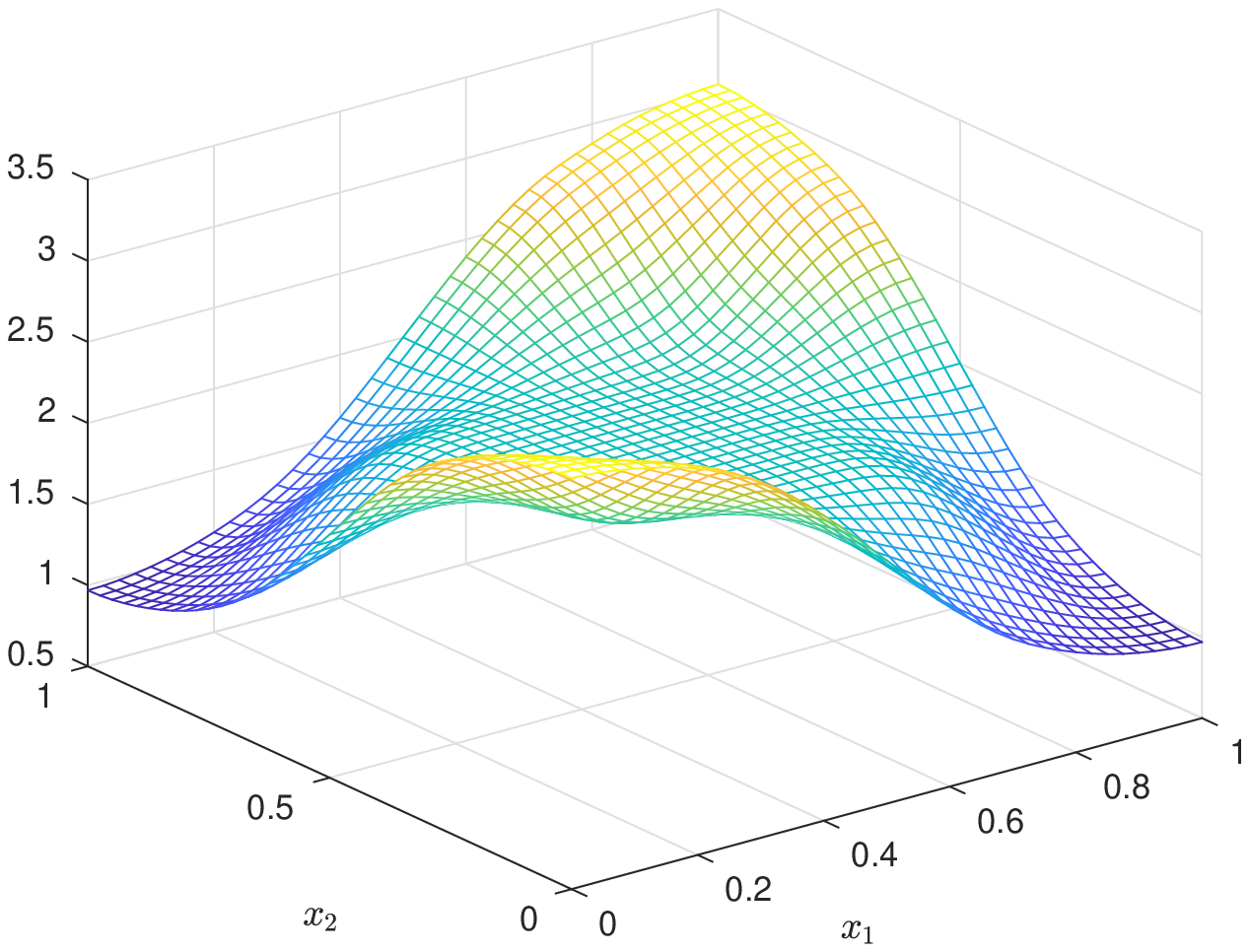}
\caption{$\alpha=1.99, T=1$}
\end{subfigure}\quad
\begin{subfigure}[p]{.30\textwidth}
\includegraphics[width=.2\textheight]{./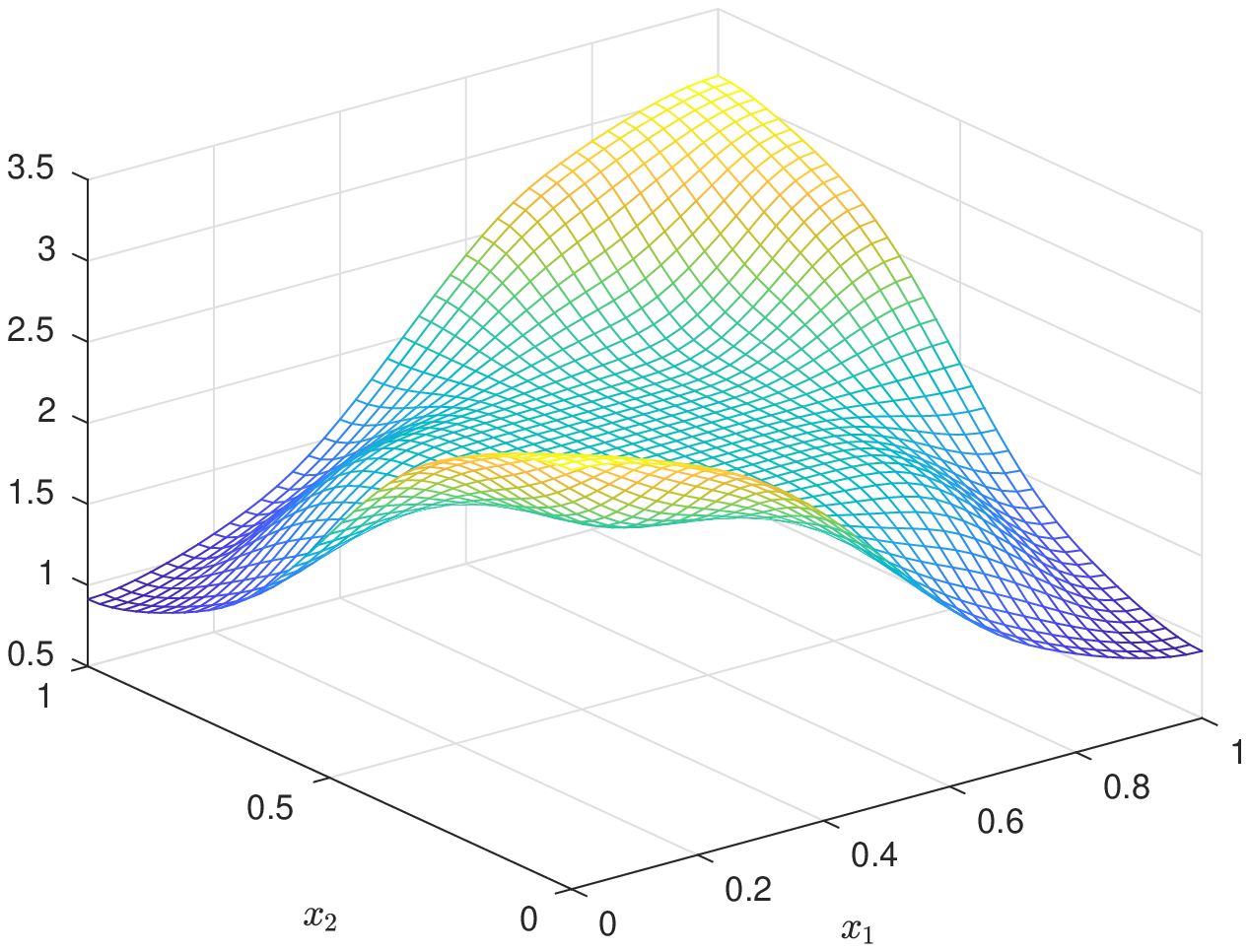}
\caption{$\alpha=2, T=1$}
\end{subfigure}

\begin{subfigure}[p]{.30\textwidth}
\includegraphics[width=.2\textheight]{./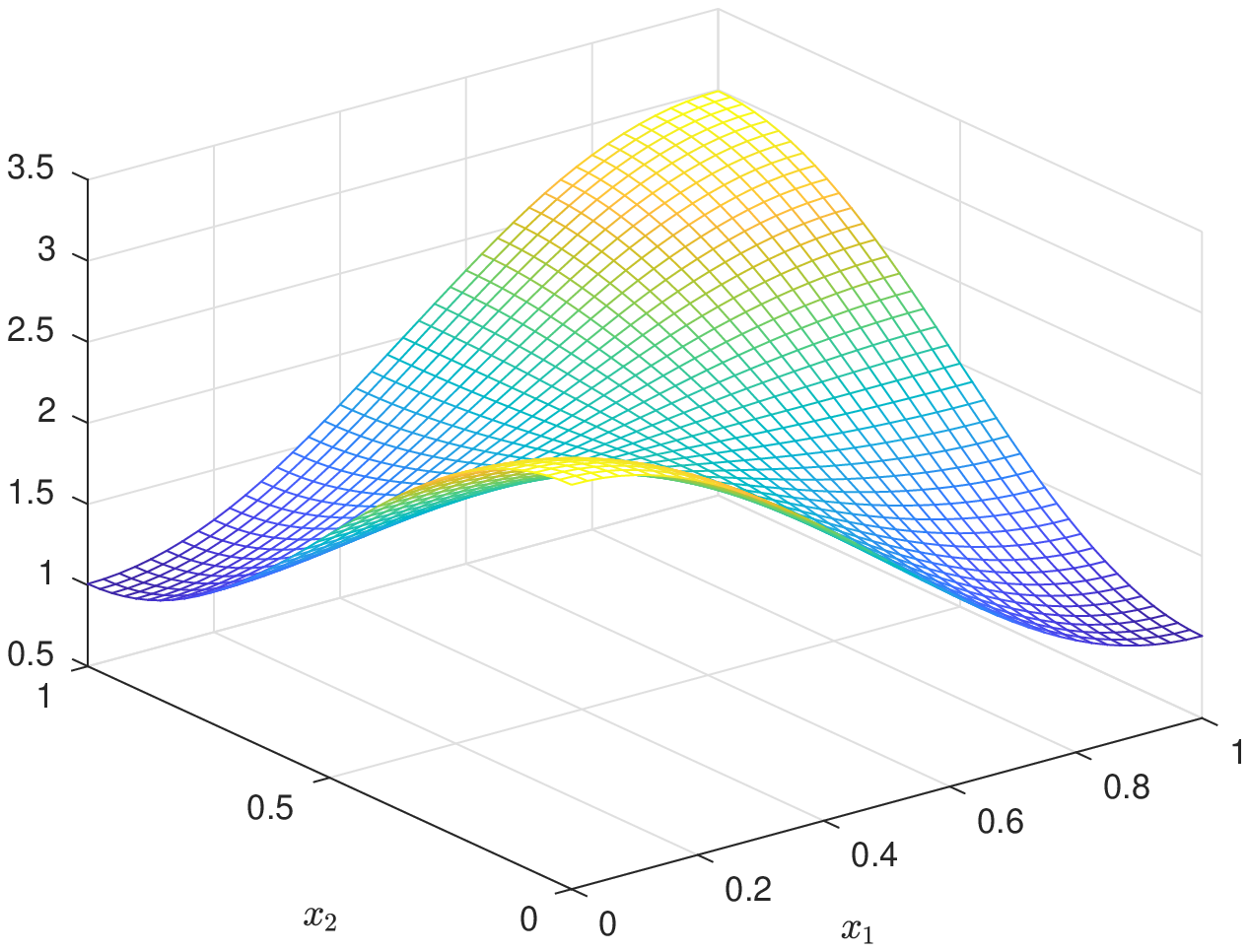}
\caption{$\alpha=1.99, T=4$}
\end{subfigure}\quad
\begin{subfigure}[p]{.30\textwidth}
\includegraphics[width=.2\textheight]{./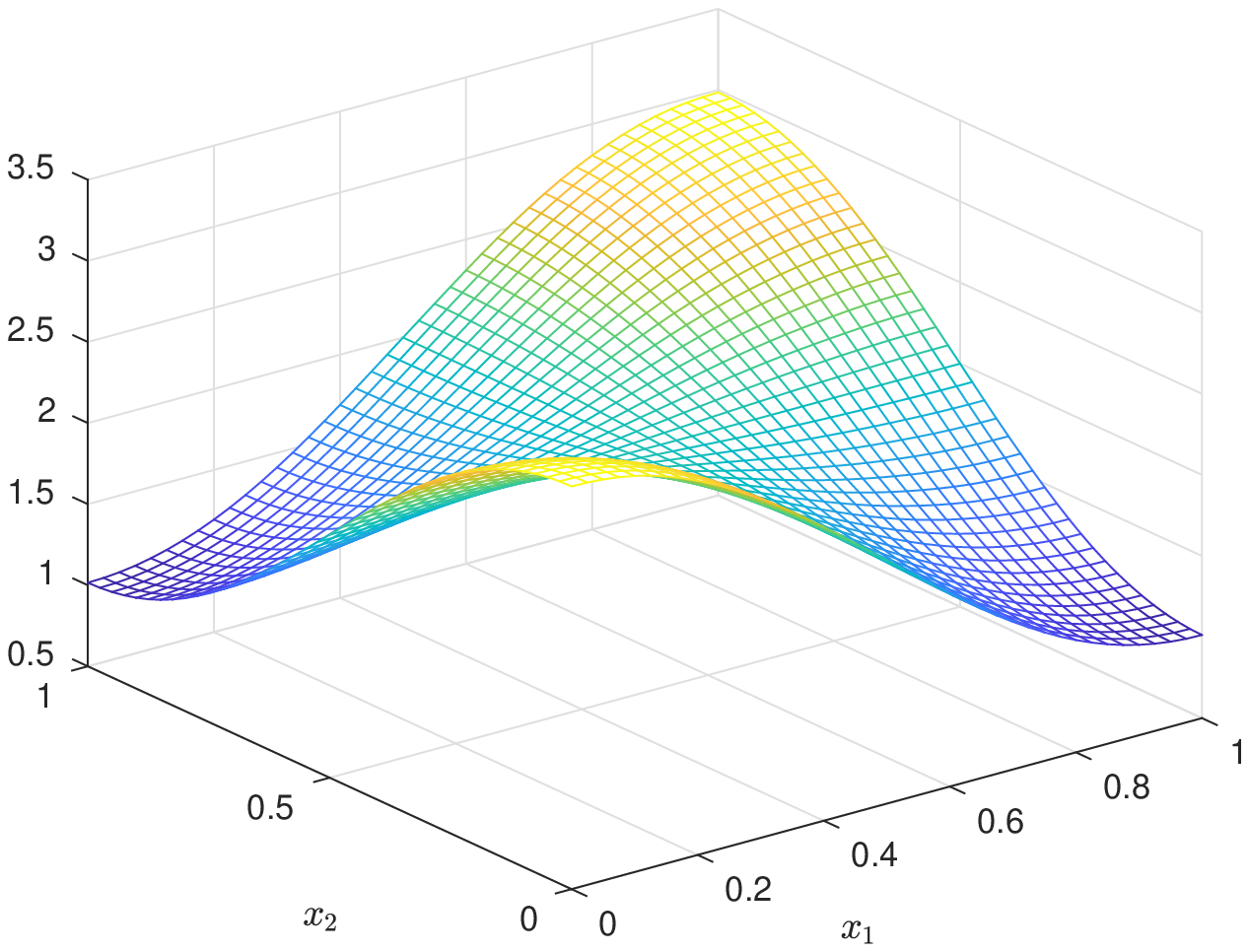}
\caption{$\alpha=2, T=4$}
\end{subfigure}

\caption{Comparison of reconstructed results for $\alpha\approx 1$ and for $\alpha\approx 2$.}
\label{fig:inv_alpha}
\end{figure}
\end{example}


\section{Analysis of inverse problem (IP1')}
\label{sec-IP1prime} 
 The study of inverse source problems carried out in the preceding sections was concerned with unknown source terms in either of the two forms \eqref{source1} or \eqref{source2}. However, this analysis does not apply to source terms in the form of
\eqref{source1p}
 and we shall see in the case $\alpha=1$ that $f$ is not uniquely determined by partial observation of the solution to \eqref{eq1}. This is not surprising given the obstruction to identification of general time-dependent source terms by partial data, exhibited in the Appendix, but it turns out that it can be further described for source terms in the form of \eqref{source1p}.

\subsection{Statement of the results}

 Set $\alpha=1$. Given a suitable internal boundary observation of the solution to \eqref{eq1}, we aim to characterize all source terms
 $f(x,t)=\sigma(t) g(x) + \beta(t) h(x)$ in the form of \eqref{source1p} generating exactly the same data. To this purpose, assuming that the function $\beta$ does not change signs and that it is not-identically zero in $(0,T)$, we  introduce the operator $\int_0^T \beta (t) e^{A_{q,\rho} t} dt$,  where we recall that $A_{q,\rho}$ is defined in Section \ref{sec-Duhamel}. It is boundedly invertible in $L^2_\rho(\Omega)$ and we denote its inverse by $\left(\int_0^T \beta (t) e^{A_{q,\rho} t} dt \right)^{-1}$. Then, by the operatorial calculus, the following operator
$$
H_{q,\rho} :=-\left(\int_0^T \beta (t) e^{A_{q,\rho} t} dt \right)^{-1} \left(\int_0^T \sigma (t) e^{A_{q,\rho} t}dt \right),
$$
is self-adjoint in $L_\rho^2(\Omega)$.  

\begin{thm}
\label{t7} 
Let $\sigma \in L^2(0,T)$ and $\beta \in L^1(0,T)$ be supported in $[0,T)$. Assume further that $\beta$ is not-identically zero and does not change sign in $(0,T)$. Given $g$ and $h$ in $L^2(\Omega)$,  we denote by $u$ the solution to \eqref{eq1} associated with $\alpha=1$ and source term $f$ expressed by \eqref{source1p}. Then, for  an arbitrarily chosen non-empty open subset $\Omega' \subset \Omega$, we have the implication:
\bel{t7a}
 u_{|(0,T) \times \Omega'}=0  \Longrightarrow  h = \rho H_{q,\rho} \rho^{-1} g\ \mbox{in}\ \Omega.
\ee
\end{thm}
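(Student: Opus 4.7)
\textbf{Proof plan for Theorem \ref{t7}.} The strategy is to adapt the Laplace–spectral–residue machinery developed in the proof of Theorem \ref{t1} to the two-term source \eqref{source1p}, and then to read the resulting identity on the Fourier coefficients of $g$ and $h$ as the operator equation $h=\rho H_{q,\rho}\rho^{-1}g$.

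First, I would specialise Proposition \ref{pr1} to $\alpha=1$ to extend $t\mapsto u(t,\cdot)$ to an $L^2(\Omega)$-valued map which is holomorphic in a sector $\cC_{\theta_\star}$ containing a right neighbourhood of $T$. Combined with the vanishing hypothesis $u_{|(0,T)\times\Omega'}=0$ and the uniqueness of analytic continuation, this gives $u(t,x)=0$ for every $(t,x)\in(0,+\infty)\times\Omega'$. Taking the Laplace transform in $t$ and invoking the second part of Proposition \ref{pr1} and Remark \ref{rmk-res}, for each $p\in(0,+\infty)$ the function $U(p)\in L^2(\Omega)$ solves the elliptic system
\bel{t7-lap}
\left\{\begin{array}{rcl}
(A_q+\rho p)U(p) & = & \widehat{\sigma}(p)g+\widehat{\beta}(p)h\ \mbox{in}\ \Omega,\\
B_\star U(p) & = & 0\ \mbox{on}\ \pd\Omega,\\
U(p) & = & 0\ \mbox{in}\ \Omega',
\end{array}\right.
\ee
where $\widehat{\sigma}(p):=\int_0^T e^{-pt}\sigma(t)\,dt$ and $\widehat{\beta}(p):=\int_0^T e^{-pt}\beta(t)\,dt$ are entire in $p$, since both $\sigma$ and $\beta$ are compactly supported in $[0,T)$.

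Next, I would expand $U(p)$ in the orthonormal eigenbasis $\{\varphi_{n,k}\}$ of $A_{q,\rho}$ in $L_\rho^2(\Omega)$, exactly as in Step 2 of the proof of Theorem \ref{t1}. Writing $g_{n,k}:=\langle\rho^{-1}g,\varphi_{n,k}\rangle_{L_\rho^2(\Omega)}$ and $h_{n,k}:=\langle\rho^{-1}h,\varphi_{n,k}\rangle_{L_\rho^2(\Omega)}$, the first line of \eqref{t7-lap} yields
\[
U(p)=\sum_{n=1}^{+\infty}\sum_{k=1}^{m_n}\frac{\widehat{\sigma}(p)g_{n,k}+\widehat{\beta}(p)h_{n,k}}{\lambda_n+p}\,\varphi_{n,k}\ \mbox{in}\ L_\rho^2(\Omega),
\]
while the third line forces the same sum to vanish a.e.\ on $\Omega'$, for every $p\in(0,+\infty)$. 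Viewed in the complex variable $p$, the right-hand side extends to an $L_\rho^2(\Omega')$-valued meromorphic function on $\C$ whose only possible singularities are simple poles at $p=-\lambda_n$. Since it is identically zero on the interval $(0,+\infty)$, it vanishes wherever holomorphic and its residues at $p=-\lambda_n$ must be zero, giving
\[
\sum_{k=1}^{m_n}\bigl(\widehat{\sigma}(-\lambda_n)g_{n,k}+\widehat{\beta}(-\lambda_n)h_{n,k}\bigr)\varphi_{n,k}=0\ \mbox{in}\ L_\rho^2(\Omega'),\ n\in\N.
\]
The linear independence of $\{\varphi_{n,k}\}_{k=1}^{m_n}$ in $L_\rho^2(\Omega')$, already established in Step 4 of the proof of Theorem \ref{t1} via the weak unique continuation principle for second order elliptic equations, then forces
\bel{t7-key}
\widehat{\sigma}(-\lambda_n)g_{n,k}+\widehat{\beta}(-\lambda_n)h_{n,k}=0,\ n\in\N,\ k=1,\ldots,m_n.
\ee

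Finally, I would translate \eqref{t7-key} into the claimed operator identity. Since $\beta$ does not change sign and is not identically zero, and $t\mapsto e^{\lambda_n t}$ is strictly positive, we have $\widehat{\beta}(-\lambda_n)=\int_0^T\beta(t)e^{\lambda_n t}\,dt\neq0$ for every $n\in\N$; this is precisely what makes $\int_0^T\beta(t)e^{A_{q,\rho}t}\,dt$ invertible in the operator-calculus sense. Because $e^{A_{q,\rho}t}\varphi_{n,k}=e^{\lambda_n t}\varphi_{n,k}$, both operators $\int_0^T\sigma(t)e^{A_{q,\rho}t}\,dt$ and $\int_0^T\beta(t)e^{A_{q,\rho}t}\,dt$ are diagonalised by $\{\varphi_{n,k}\}$ with eigenvalues $\widehat{\sigma}(-\lambda_n)$ and $\widehat{\beta}(-\lambda_n)$, hence $H_{q,\rho}\varphi_{n,k}=-\widehat{\sigma}(-\lambda_n)\widehat{\beta}(-\lambda_n)^{-1}\varphi_{n,k}$. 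Dividing \eqref{t7-key} by $\widehat{\beta}(-\lambda_n)$ rewrites as $h_{n,k}=\langle H_{q,\rho}\rho^{-1}g,\varphi_{n,k}\rangle_{L_\rho^2(\Omega)}$, and summing the resulting Fourier series (which converges in $L_\rho^2(\Omega)$ because $\rho^{-1}h\in L_\rho^2(\Omega)$) yields $\rho^{-1}h=H_{q,\rho}\rho^{-1}g$, that is $h=\rho H_{q,\rho}\rho^{-1}g$ as announced. The main subtlety I expect is the meromorphic/residue step: unlike in Theorem \ref{t1}, the coefficients in the spectral sum now depend on $p$ through the entire functions $\widehat{\sigma}(p),\widehat{\beta}(p)$, so one must argue carefully that the $L_\rho^2(\Omega')$-valued series defines a meromorphic function of $p$ with only simple poles at $-\lambda_n$ and that the vanishing on $(0,+\infty)$ really propagates to termwise vanishing of residues; a companion issue is that $H_{q,\rho}$ is in general unbounded, so the identity must be interpreted as $\rho^{-1}g\in\dom(H_{q,\rho})$ with $H_{q,\rho}\rho^{-1}g=\rho^{-1}h$, which is automatic once $\rho^{-1}h\in L_\rho^2(\Omega)$ and \eqref{t7-key} hold.
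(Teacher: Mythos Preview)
Your proposal is correct and follows essentially the same route as the paper: Laplace transform and analytic continuation to reach the resolvent identity on $\Omega'$, spectral expansion, meromorphic continuation in $p$ with residue extraction at $p=-\lambda_n$, linear independence of eigenfunctions on $\Omega'$ via unique continuation, and finally the operator-calculus reading of the scalar identities as $\rho^{-1}h=H_{q,\rho}\rho^{-1}g$. You are in fact slightly more explicit than the paper about the domain issue for the unbounded $H_{q,\rho}$ and about why the residue/meromorphic step is the delicate point; the paper dispatches these in a line or two (multiplying by $\lambda_N+p$ and letting $p\to-\lambda_N$, then invoking Parseval to place $g$ in the domain), so your caveats are well placed but will not require any new ideas.
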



Although Theorem \ref{t7} is interesting in its own right, the main  benefit of the above statement is the following characterization of the set of source terms expressed by \eqref{source1}.  We recall for $\ell \in \N$, that $H^\ell_0(0,T)$ denotes the closure of $\cC^\infty_0(0,T)$ in the $H^\ell(0,T)$-norm topology.

\begin{cor}
\label{c5} 
 For $\ell \in \N$ fixed, assume that $\beta \in H_0^\ell(0,T) \setminus \{ 0 \}$ is supported in $[0,T)$ and does not change sign in $(0,T)$. Suppose moreover that $\rho(x)=1$ for a.e. $x \in \Omega$. For $g$ and $h$ in $L^2(\Omega)$, let
$u_g$ denote the weak-solution to \eqref{eq1} associated with $f(t,x)=\frac{d^\ell \beta}{dt^\ell}(t)g(x)$, and let $u_h$ be the weak-solution to \eqref{eq1} with source term $f(t,x)=-\beta(t)h(x)$.
Then, for any non-empty open subset $\Omega' \subset \Omega$, we have: 
\bel{c1b}
 u_g =u_h\ \mbox{in}\ (0,T) \times \Omega' \Longrightarrow  h=(-1)^{\ell+1}A_q^\ell\ g\ \mbox{in}\ \Omega . 
\ee
Moreover,  if $\beta \in H_0^1(0,T)$, then we have
\bel{c1c}
g = h=0\ \mbox{in}\ \Omega'.
\ee
\end{cor}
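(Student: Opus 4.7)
The plan is to reduce \eqref{c1b} to a direct application of Theorem \ref{t7}, make the resulting operator $H_{q,\rho}$ explicit through iterated integration by parts, and deduce \eqref{c1c} from an elementary ODE argument on the source restricted to $Q'$. First, I set $u := u_g - u_h$, which by linearity is the weak solution to \eqref{eq1} (with $\alpha = 1$ and $\rho = 1$) associated with the source $f(t,x) := \beta^{(\ell)}(t) g(x) + \beta(t) h(x)$. Since $\beta \in H_0^\ell(0,T)$ is supported in $[0,T)$, so is $\sigma := \beta^{(\ell)} \in L^2(0,T)$, whence $f$ falls under the form \eqref{source1p} and the remaining hypotheses of Theorem \ref{t7} are precisely those assumed on $\beta$ in the corollary. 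The equality $u_g = u_h$ on $Q' := (0,T) \times \Omega'$ then translates into $u_{|Q'} = 0$, and Theorem \ref{t7} yields $h = H_{q,\rho} g$ in $\Omega$ (with $\rho = 1$).

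To make this identity explicit, I would compute $H_{q,\rho}$ for $\sigma = \beta^{(\ell)}$ by integrating by parts $\ell$ times in $\int_0^T \beta^{(\ell)}(t) e^{A_{q,\rho} t} dt$. The hypothesis $\beta \in H_0^\ell(0,T)$ ensures that $\beta^{(k)}(0) = \beta^{(k)}(T) = 0$ for each $k = 0, \ldots, \ell - 1$, so every boundary term vanishes and one is left with
\begin{equation*}
\int_0^T \beta^{(\ell)}(t) e^{A_{q,\rho} t} dt = (-1)^\ell A_{q,\rho}^\ell \int_0^T \beta(t) e^{A_{q,\rho} t} dt.
\end{equation*}
Since $A_{q,\rho}^\ell$ commutes with the inverse of $\int_0^T \beta(t) e^{A_{q,\rho} t} dt$ by the functional calculus, one then obtains $H_{q,\rho} = (-1)^{\ell+1} A_{q,\rho}^\ell$ and hence $h = (-1)^{\ell+1} A_q^\ell g$, which is \eqref{c1b}. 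The delicate point here is that $e^{A_{q,\rho} t}$ is unbounded on $L^2(\Omega)$ for $t > 0$ (the spectrum of $A_{q,\rho}$ is unbounded above), so the above integration-by-parts identity is not to be read as a formal manipulation of bounded operators; rather, it must be interpreted through the spectral decomposition of $A_{q,\rho}$, where it reduces to the scalar identities $\int_0^T \beta^{(\ell)}(t) e^{\lambda_n t} dt = (-1)^\ell \lambda_n^\ell \int_0^T \beta(t) e^{\lambda_n t} dt$ on each eigenspace. This is the main technical obstacle, but it becomes transparent once the spectral viewpoint is adopted.

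Finally, to establish the stronger conclusion \eqref{c1c} under the additional hypothesis $\beta \in H_0^1(0,T)$, I note that $u \equiv 0$ on $Q'$ forces $\partial_t u = 0$ and $A_q u = 0$ on $Q'$ in the sense of distributions, so the PDE $(\partial_t + A_q) u = f$ yields $f = 0$ on $Q'$, i.e.\ $\beta^{(\ell)}(t) g(x) + \beta(t) h(x) = 0$ for a.e.\ $(t,x) \in Q'$. If $g(x) \neq 0$ at some $x \in \Omega'$, this becomes the linear ODE $\beta^{(\ell)}(t) = \lambda \beta(t)$ on $(0,T)$ with $\lambda := -h(x)/g(x)$; the vanishing initial conditions $\beta(0) = \beta'(0) = \cdots = \beta^{(\ell-1)}(0) = 0$ inherited from $\beta \in H_0^\ell(0,T)$ then force $\beta \equiv 0$ by Cauchy-Lipschitz uniqueness, contradicting the hypothesis $\beta \not\equiv 0$. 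Therefore $g = 0$ a.e.\ in $\Omega'$, and the identity $\beta(t) h(x) = 0$ together with $\beta \not\equiv 0$ gives $h = 0$ a.e.\ in $\Omega'$, proving \eqref{c1c}.
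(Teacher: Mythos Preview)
Your derivation of \eqref{c1b} is correct and matches the paper's: both apply Theorem~\ref{t7} and reduce the identification of $H_{q,\rho}$ to the scalar identity $\int_0^T \beta^{(\ell)}(t)e^{\lambda_n t}\,dt = (-\lambda_n)^\ell \int_0^T \beta(t)e^{\lambda_n t}\,dt$ on each eigenspace (the paper writes this as $\widehat{\beta^{(\ell)}}(p)=p^\ell\,\widehat{\beta}(p)$ evaluated at $p=-\lambda_n$, which is the same computation).

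For \eqref{c1c} your route is genuinely different. The paper, treating only $\ell=1$, substitutes $h=A_q g$ back into the source so that $f=(\partial_t+\mathcal A_q)(\beta g)$; then $\beta g$ is itself a weak solution of the IBVP (the boundary condition holds because $g\in D(A_q)$), and uniqueness gives $u=\beta g$, hence $\beta g=0$ on $Q'$ and $g=0$ on $\Omega'$, whence $h=\mathcal A_q g=0$ on $\Omega'$ by locality of $\mathcal A_q$. You instead read $f=0$ on $Q'$ directly from the equation (using locality of the differential expression---note that it is $\mathcal A_q u$, not the realization $A_q u$, that vanishes distributionally on $Q'$) and then run an ODE uniqueness argument on $\beta$. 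Your approach is correct and has the advantage of handling every $\ell\geq 1$ uniformly, whereas the paper's explicit-solution trick is slicker for $\ell=1$ but does not extend as readily to higher $\ell$.
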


\subsection{Proofs of Theorem \ref{t7} and Corollary \ref{c5}}

\paragraph{\bf Proof of Theorem \ref{t7}.} We argue as in the derivation of \eqref{t1dbis}  to obtain that the Laplace transform $U(p)$,  $p \in (0,+\infty)$, of the solution $u$ to
to \eqref{eq1} with source term $f$, given by \eqref{source1p}, solves
\bel{pc1}
\left\{ \begin{array}{rcll} 
U(p) & = &  \widehat{\sigma}(p) (A_{q,\rho}+ p)^{-1} \rho^{-1}  g  + \widehat{\beta}(p) (A_{q,\rho}+ p)^{-1} \rho^{-1}  h & \mbox{in}\ L_\rho^2(\Omega),\\
U(p) & = & 0 & \mbox{in}\ L_\rho^2(\Omega').
\end{array}
\right.
\ee 
Here we  use the notations of the proof of Theorem \ref{t1} and  we set $\widehat{\sigma}(p):=\int_0^Te^{-pt}\sigma(t)dt$ and $\widehat{\beta}(p):=\int_0^T e^{-pt} \beta(t)dt$ for $p \in \C$. From the spectral representation of the operator $A_{q,\rho}$, introduced in Section \ref{sec-Duhamel}, we infer from \eqref{pc1} that the identity
\bel{pc2}
\widehat{\sigma}(p) \sum_{n=1}^{+\infty} \frac{\sum_{k=1}^{m_n}  g_{n,k} \varphi_{n,k}}{\lambda_n+p} +
\widehat{\beta}(p) \sum_{n=1}^{+\infty}\frac{\sum_{k=1}^{m_n}  h_{n,k}\varphi_{n,k}}{\lambda_n+p} =0,
\ee
holds in $L_\rho^2(\Omega')$ for every $p \in (0,+\infty)$,  where $g_{n,k}:=\langle \rho^{-1}g,\varphi_{n,k}\rangle_{L_\rho^2(\Omega)}$ and
 $h_{n,k}:=\langle \rho^{-1}h,\varphi_{n,k}\rangle_{L_\rho^2(\Omega)}$. Moreover,  since $p \mapsto \widehat{\sigma}(p) \sum_{n=1}^{+\infty} \frac{\sum_{k=1}^{m_n}  g_{n,k} \varphi_{n,k}}{\lambda_n+p}$ and  $p \mapsto \widehat{\beta}(p) \sum_{n=1}^{+\infty} \frac{\sum_{k=1}^{m_n}  h_{n,k} \varphi_{n,k}}{\lambda_n+p}$ can be meromorphically continued to $\C \setminus \{-\lambda_n:\ n \in\N\}$,  we extend \eqref{pc2} meromorphically in $\C \setminus \{-\lambda_n:\ n \in\N\}$. Therefore, for each $N \in \N$,  multiplying \eqref{pc2} by $\lambda_N+p$ and sending 
 $p$ to $-\lambda_N$, we obtain 
$$ \sum_{k=1}^{m_N}  \left( \widehat{\sigma}(-\lambda_N) g_{N,k}+ \widehat{\beta}(-\lambda_N)  h_{N,k} \right) \varphi_{N,k}=0 $$
in $L_\rho^2(\Omega')$. 
Since the function $\beta$ is not identically zero and does not change sign in $(0,T)$, we have $\widehat{\beta}(-\lambda_N)\neq 0$, so the above line can be reformulated as
$$ \sum_{k=1}^{m_N} \left( h_{N,k}+ \frac{\widehat{\sigma}(-\lambda_N)}{\widehat{\beta}(-\lambda_N)}  g_{N,k} \right) \varphi_{N,k}=0,$$
the equality being understood in the $L_\rho^2(\Omega')$-sense.
Next, since the family $\{ \varphi_{N,k} :\ k=1,\ldots,m_N \}$ is linearly independent in $L_\rho^2(\Omega')$, by virtue of Step 4  in Section \ref{sec-t1pr}, we necessarily have
\bel{pc3}
\frac{\widehat{\sigma}(-\lambda_N)}{\widehat{\beta}(-\lambda_N)}  g_{N,k}=-h_{N,k},\ k=1,\ldots,m_N.
\ee
Now,  since \eqref{pc3} is valid for all $N \in \N$, it follows from the Parseval identity $\sum_{n=1}^{+\infty} \sum_{k=1}^{m_n} \abs{h_{n,k}}^2 = \norm{\rho^{-1} h}_{L_\rho^2(\Omega)}^2$ for $h \in L^2(\Omega)$, that
$$ \abs{\frac{\widehat{\sigma}(-\lambda_n)}{\widehat{\beta}(-\lambda_n)}}^2  \abs{g_{n,k}}^2 < \infty. $$
Therefore,  the operatorial calculus yields that $g$ lies in the domain of the operator of $ H_{q,\rho}$ and fulfills \eqref{t7a}.\qed

\paragraph{\bf Proof of Corollary \ref{c5}.}
 We set $\sigma:=\frac{d^\ell \beta}{dt^\ell}$.
Since $u:=u_g-u_h$ is a solution to the IBVP \eqref{eq1} with source term $f$ in the form of \eqref{source1p}, then we have $h=H_{q,1} g$ by Theorem \ref{t7}.  Hence
\bel{pc4}
-\widehat{\beta}(-\lambda_n)^{-1} \widehat{\frac{d^\ell \beta}{dt^\ell}}(-\lambda_n) \langle g,\varphi_{n,k} \rangle_{L^2(\Omega)}=\langle h,\varphi_{n,k} \rangle_{L^2(\Omega)},\ n\in \N,\ k=1,\ldots,m_n.
\ee
Moreover, we have
$\widehat{\frac{d^\ell \beta}{dt^\ell}}(p)=p^\ell  \widehat{\beta}(p)$ for each $p \in \R$,  by $\beta \in  H_0^\ell(0,T)$, 
 whence \eqref{pc4}  implies
$$
(-1)^{\ell+1}\lambda_n^\ell \langle g,\varphi_{n,k} \rangle_{L^2(\Omega)}= \langle h,\varphi_{n,k} \rangle_{L^2(\Omega)},\ n \in\N,\ k=1,\ldots,m_n.$$
This entails that $g\in D(A_q^\ell)$ verifies \eqref{c1b}. 

In the particular case where $\ell=1$, we have $h=-A_q g$, hence $u$ is a solution to the IBVP \eqref{eq1} with $\alpha=1$ and $f(t,x)=(\partial_t-\cA_q)\beta(t)g(x)$ for a.e. $(t,x) \in Q$.  Since $(t,x)\mapsto \beta(t)g(x)$ is a weak-solution to the exact same problem,  then $u(t,x) = \beta(t) g(x)$ in $Q$, by the uniqueness of the solution to \eqref{eq1}.  Thus \eqref{c1c} follows directly from this.\qed



\section{Appendix: A natural obstruction to identifiability}
\label{sec-app}
In this appendix we characterize the obstruction  to  the unique determination of time-dependent source terms $f$ in \eqref{eq1}, from either internal or lateral measurement of the weak solution $u$ to \eqref{eq1}. 

 Let $\Omega'$ satisfy $\overline{\Omega'} \subset \Omega$. Pick $u_0 \in \cC^\infty_0((0,T)\times (\Omega \setminus \overline{\Omega'})) \setminus \{ 0 \}$ and set
$$ \tilde{u}_0(t,x) := \left\{ \begin{array}{cl} u_0(t,x) & \mbox{if}\ (t,x) \in (0,T) \times (\Omega \setminus \overline{\Omega'}), \\ 0 & \mbox{if}\ (t,x) \in (0,T) \times \overline{\Omega'}. \end{array} \right. $$
We consider the IBVP \eqref{eq1} with source term $f_0:=\rho\partial_t^\alpha \tilde{u}_0 - \mathcal A_q \tilde{u}_0$.
Evidently, $\tilde{u}_0$ is a weak solution to \eqref{eq1}, hence $u=\tilde{u}_0$ from the uniqueness of the solution to \eqref{eq1} with $f=f_0$.
Moreover, since $\tilde{u}_0$ is not identically zero in $Q$, then the same is true for $f_0$ (otherwise $\tilde{u}_0$ would be zero everywhere by uniqueness of the solution to \eqref{eq1},  which in contradiction with the definition of $u_0$).
Thus, we have $u_{|(0,T) \times \Omega'}=0$, despite of the fact that $f_0$ is not identically zero in $Q$. 

This establishes that the recovery of the unknown source term $f$ by partial knowledge of $u$, is completely hopeless, or, otherwise stated, that full knowledge of the solution $u$ to \eqref{eq1} (i.e. measurement of $u$ performed on the entire time-space cylinder $Q$) is needed for determining $f \in L^1(0,T;L^2(\Omega))$.

\section*{ Acknowledgments}
The work of the first, second and third authors is partially supported by the Agence Nationale de la Recherche (project MultiOnde) under  grant ANR-17-CE40-0029.
The fourth author is supported by Grant-in-Aid for Scientific Research (S) 
15H05740 of Japan Society for the Promotion of Science and
by The National Natural Science Foundation of China 
(no. 11771270, 91730303), and the ``RUDN University Program 5-100".

 
\end{document}